\documentclass[11pt,centertags,oneside]{amsart}
\usepackage{amsmath,amstext,amsthm,amscd,typearea,hyperref}
\usepackage{amssymb}
\usepackage{a4wide}
\usepackage[mathscr]{eucal}
\usepackage{mathrsfs}
\usepackage{typearea}
\usepackage{charter}

\usepackage{comment}
\usepackage{cancel}

\usepackage{cleveref}

 \usepackage[normalem]{ulem} %% To remove underline from the title of journals in  references

\usepackage[a4paper,width=16.5cm,top=3cm,bottom=3cm]{geometry}

\numberwithin{equation}{section}

\allowdisplaybreaks
\emergencystretch=\maxdimen
\usepackage{multicol}
\usepackage{xcolor}

\usepackage{graphicx} % for rotatebox
\usepackage{tikz} 
\usetikzlibrary{matrix}
\usepackage{todonotes} % comment box

\usepackage[pagewise]{lineno}
\usepackage{hyperref}

\theoremstyle{plain}
\newtheorem{thm}{Theorem}[section]
\newtheorem{coro}[thm]{Corollary}
\newtheorem{prop}[thm]{Proposition}
\newtheorem{lem}[thm]{Lemma}
\newtheorem*{thm*}{Theorem}
\newtheorem*{lem*}{Lemma}
\newtheorem*{prop*}{Proposition}
\newtheorem*{coro*}{Corollary}

\theoremstyle{definition}
\newtheorem{defn}[thm]{Definition}

\newtheorem*{defn*}{Definition}
\newtheorem*{ex*}{Example}
\newtheorem*{rmk*}{Remark}
\numberwithin{equation}{section}

\DeclareMathOperator*{\esssup}{ess\,sup}
\DeclareMathOperator*{\essinf}{ess\,inf}

\newcommand{\CF}{T}
\newcommand{\TF}{\mathcal{L}}

\newcommand{\M}{\mathcal M}

\newcommand{\F}{\widehat T}
\newcommand{\T}{\overline{T}}

\newcommand{\bH}{\mathbb{H}^2}

\newcommand{\bR}{\mathbb R}

\newcommand{\veps}{\varepsilon}

\title{Extreme value theorem for geodesic flow on the quotient of the theta group}

\author{Jaelin Kim}
\address{HUN-REN Alfr\'ed R\'enyi Institute of Mathematics, Re\'altanoda utca 13-15, Budapest 1053, Hungary}
\email{kimjl@snu.ac.kr}

\author{Seul Bee Lee}
\address{Department of Mathematical Sciences, Seoul National University, 1 Gwanak-ro, Gwanak-gu, Seoul 08826, Republic of Korea}
\email{seulbee.lee@snu.ac.kr}

\author{Seonhee Lim}
\address{Department of Mathematical Sciences and Research Institute of Mathematics, Seoul National University}
\email{slim@snu.ac.kr,seonhee.lim@gmail.com}

\usepackage{pgfplots}
\pgfplotsset{compat=1.18}
\begin{document}

\begin{abstract} 
We establish an extreme value theorem for the geodesic flow on the hyperbolic surface $\Theta\backslash\mathbb{H}^2$ associated with the theta group $\Theta$. To capture excursions into both cusps of this surface, we introduce a generalized continued fraction algorithm obtained by splicing the even and odd--odd continued fraction maps into a single dynamical system. We prove that the natural extension of this map is isomorphic to the first return map of the geodesic flow on a suitable cross section. Using spectral properties of the associated transfer operator, we derive a Galambos-type extreme value law for the digits of the spliced continued fraction. This symbolic result is then translated into a geometric extreme value theorem describing maximal cusp excursions of geodesics on $\Theta\backslash\mathbb{H}^2$.
\end{abstract} 

\maketitle

%\title{}
%
%\author{}
%\address{}
%\email{}
%
%\author{Seul Bee Lee}
%\address{}
%\email{}
%
%\author{}
%\address{}
%\email{}
%
%\subjclass[2010]{Primary 11J70; Secondary 37E05.}
%%11J70: Continued fractions and generalizations
%%37E05: Maps of the interval
%\keywords{Continued fractions; Diophantine approximation; Romik system}
%
%%\thanks{Research partially supported by the National Research Foundation of Korea (NRF-2018R1A2B6001624).}

\section{Introduction}

The interplay between continued fractions, symbolic dynamics, and hyperbolic geometry has proven fruitful for understanding quantitative properties of geodesic flows on hyperbolic surfaces.
A classic example is the modular surface
$\mathrm{SL}(2,\mathbb Z)\backslash\bH$,
where the regular continued fraction expansion of a real number
encodes the itinerary of the corresponding geodesic \cite{Ser85, Art92}. Statistical statements about continued fraction digits translate directly into geometric statements about cusp excursions \cite{Pol09, Moe82}.

In this article, we study the \emph{theta group}
$$\Theta = \left\{\begin{pmatrix}a&b\\c&d\end{pmatrix}\in\mathrm{SL}(2,\mathbb{Z}):\begin{pmatrix}a&b\\c&d\end{pmatrix}\equiv\begin{pmatrix}1&0\\0&1\end{pmatrix}\text{ or }\begin{pmatrix}0&-1\\1&0\end{pmatrix}\pmod{2}\right\},$$
which is a congruence subgroup of level $2$ and of index $3$ of $\mathrm{SL}(2,\mathbb Z)$.
The group $\Theta$ is generated by
$$\tau=\begin{pmatrix}1&2\\0&1\end{pmatrix}\quad\text{ and }\quad\sigma = \begin{pmatrix}0&-1\\1&0\end{pmatrix}.$$
Let $\mathcal{F}$ be the ideal triangle with vertices $-1$, $1$, and $\infty$. The triangle $\mathcal{F}$ is a fundamental domain for the action of $\Theta$ on $\bH$. See Figure~\ref{fig:fund}.
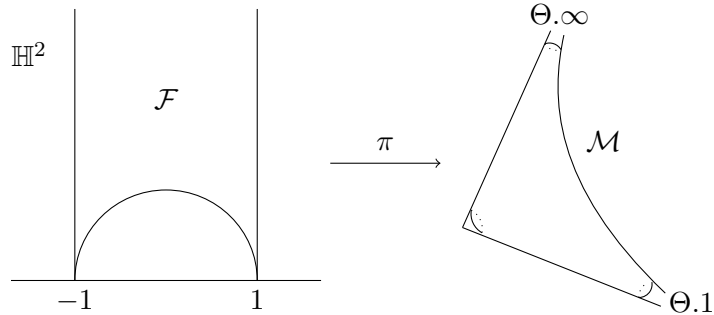
\begin{figure}
\begin{tikzpicture}[scale=1.2]
\node at (-1.5,2.5) {$\bH$}; 

\node[below] at (1, 0) {$1$};
\node[below] at (-1, 0) {$-1$};

\draw (-1.7, 0) -- (1.7, 0);

%\filldraw[gray!30!white] (-1,0)--(-1,3)--(1,3)--(1,0) arc (0:180:1);
\draw (-1,0) -- (-1,3);
\draw (1,0) -- (1,3);
\draw (-1,0) arc (180:0:1);

\node at (0,2) {$\mathcal F$};
\end{tikzpicture}
\begin{tikzpicture}[scale=0.58]
\draw[->] (-1, 1.5) -- node[above = 1pt] {$\pi$} (1.5, 1.5);

\node at (4.7+0.5, 2) {$\mathcal{M}$};

	\draw (3.5+.5,2.5+2.03) -- (1.5+.5,-2+2.03);
	\draw[dotted] (6,-3.8+2.03+0.2) arc (180:120:0.4); 
    \draw         (6,-3.8+2.03+0.2) arc (270:370:.3);
	\draw[dotted] (1.7+.5,-1.6+2.03) arc (60:-10:.5); 
    \draw         (1.7+.5,-1.6+2.03) arc (170:240:.5); 
	\draw[dotted] (3.37+.5,2.2+2.03) arc (210:280:.3); 
    \draw         (3.37+.5,2.2+2.03) arc (110:30:.3); 
	
	\draw (3.8+.5,2.4+2.03) .. controls (3.5+.5, 1+2.03) and (3.5+.5, -1+2.03) .. (6.1+.5,-3.5+2.03);
	
	\node at (4.2,4.9) {$\Theta.\infty$};
    \node at (6.5+0.7,-3.7+2.03) {$\Theta.1$};

	\draw (6+.5,-3.8+2.03) -- (1.5+.5,-2+2.03);

\end{tikzpicture}

\caption{A fundamental domain $\mathcal F$ of $\Theta$ and the quotient surface $\M=\Theta\backslash \bH$.}
\label{fig:fund}
\end{figure}
Under the projection map 
$$\pi:\bH\to \Theta\backslash\bH,$$ the vertical lines $\mathrm{Re}(z) = \pm1$ are identified with each other, and so are the left and right quarter arcs of the unit circle.
The quotient hyperbolic surface 
$$\mathcal M \;=\; \Theta\backslash\bH$$ is topologically equivalent to a sphere
with two inequivalent cusps (at the orbits of $\infty$ and~$1$)
and one elliptic point of order $2$.
Because $\mathcal M$ has \emph{two} cusps,
no single continued‐fraction–type algorithm that has been considered to date
captures \emph{all} cusp excursions of a generic geodesic on~$\mathcal M$.

For the cusp $\Theta . \infty,$ the \emph{even continued fraction} (ECF) map $T_e$ on $[0,1]$ provides a complete symbolic coding of the associated
“even” cusp excursions (see \cite{KL96} and \cite{BM18}).
The ECF was first introduced by Schweiger \cite{Sch82, Sch84} from a Diophantine approximation perspective.
Dually, for the cusp $\Theta.1$, Kim–Lee–Liao \cite{KLL22} introduced the \emph{odd–odd continued fraction} (OOCF) map $T_{o}$, 
which is conjugate to the ECF via the involution
$$\iota(x)=\frac{1-x}{1+x}.$$
Each of the ECF and OOCF encodes the geodesic excursions to only one cusp and therefore does not yield a full description of the geodesic flow on the surface $T^1\mathcal M$.

\subsection{Spliced continued fraction.}
Our main idea for obtaining extreme value theorems is to
\emph{splice} the ECF and OOCF into a single interval map
\[
T:(0,1)\longrightarrow[0,1],
\qquad
T(x)=
\begin{cases}
T_e(x), & x\in\big(0,\tfrac12\bigr],\\[2pt]
T_{o}(x), & x\in\bigl(\tfrac12,1\bigr).
\end{cases}
\]
%with two additional branches that ensure bijectivity
%on the remaining subintervals 
See Figure~\ref{fi:graphT} and Eq.~\eqref{eqn:2.1} for the precise definition.
\begin{figure} 
\begin{tikzpicture}[scale=7]
\draw (1.4,0) -- (1.4+1,0);
\draw (1.4+0,0) -- (1.4+0,1);
%\draw[domain=1.4+1/2:1.4+1,black] plot (\x,{-(1/(\x-1.4)-2)});
\draw[domain=1.4+1/3:1.4+1/2,black] plot (\x,{(1/(\x-1.4)-2)});
\draw[domain=1.4+1/4:1.4+1/3,black] plot (\x,{-(1/(\x-1.4)-4)});
\draw[domain=1.4+1/5:1.4+1/4,black] plot (\x,{(1/(\x-1.4)-4)});
\draw[domain=1.4+1/6:1.4+1/5,black] plot (\x,{-(1/(\x-1.4)-6)});
\draw[domain=1.4+1/7:1.4+1/6,black] plot (\x,{(1/(\x-1.4)-6)});
\draw[domain=1.4+1/8:1.4+1/7,black] plot (\x,{-(1/(\x-1.4)-8)});
\draw[domain=1.4+1/9:1.4+1/8,black] plot (\x,{(1/(\x-1.4)-8)});
\draw[domain=1.4+1/10:1.4+1/9,black] plot (\x,{-(1/(\x-1.4)-10)});
\draw[domain=1.4+1/11:1.4+1/10,black] plot (\x,{(1/(\x-1.4)-10)});
\draw[domain=1.4+1/12:1.4+1/11,black] plot (\x,{-(1/(\x-1.4)-12)});
\draw[domain=1.4+1/13:1.4+1/12,black] plot (\x,{(1/(\x-1.4)-12)});
\draw[domain=1.4+1/14:1.4+1/13,black] plot (\x,{-(1/(\x-1.4)-14)});
\draw[domain=1.4+1/15:1.4+1/14,black] plot (\x,{(1/(\x-1.4)-14)});

\draw[domain=1.4+1/16:1.4+1/15,black] plot (\x,{-(1/(\x-1.4)-16)});
\draw[domain=1.4+1/17:1.4+1/16,black] plot (\x,{(1/(\x-1.4)-16)});
\draw[domain=1.4+1/18:1.4+1/17,black] plot (\x,{-(1/(\x-1.4)-18)});
\draw[domain=1.4+1/19:1.4+1/18,black] plot (\x,{(1/(\x-1.4)-18)});
\draw[domain=1.4+1/20:1.4+1/19,black] plot (\x,{-(1/(\x-1.4)-20)});
\draw[domain=1.4+1/21:1.4+1/20,black] plot (\x,{(1/(\x-1.4)-20)});

\draw[dotted, thin] (1.4+1/2,0) -- (1.4+1/2,1);

\node at (1.4-0.05,1){\tiny$1$};
\node at (1.4+1,-0.08){\tiny$1$};
\node at (1.4-0.05,-0.08){\tiny$0$};
\node at (1.4+1/2,-0.1) {\small$\frac12$};

\draw[dotted, thin] (1.4+1/3,0) -- (1.4+1/3,1);

\node at (1.4+1/3,-0.1) {\small$\frac13$};
\node at (1.4+1/4,-0.1) {\small$\frac14$};
\node at (1.4+1/5,-0.1) {\small$\frac15$};

\draw[dotted, thin](1.4+1/4,0)--(1.4+1/4,1);
\draw[dotted, thin](1.4+1/5,0)--(1.4+1/5,1);
\draw (1.4,0) -- (1.4+1,0);
\draw (1.4+0,0) -- (1.4+0,1);
%\draw[domain=1.4+0:1.4+1/3, black] plot (\x, {(\x-1.4)/(1-2*(\x-1.4))});
%\draw[domain=1.4+1/3:1.4+1/2, black] plot (\x,{1/(\x-1.4)-2});
%\draw[domain=1.4+1/2:1.4+1, black] plot (\x, {2-1/(\x-1.4)});
\draw[domain=1.4+1/2:1.4+3/5, black] plot (\x, {(2*(\x-1.4)-1)/(2-3*(\x-1.4))});
\draw[domain=1.4+3/5:1.4+2/3, black] plot (\x, {(2-3*(\x-1.4))/(2*(\x-1.4)-1)});
\draw[domain=1.4+2/3:1.4+5/7, black] plot (\x, {(3*(\x-1.4)-2)/(3-4*(\x-1.4))});
\draw[domain=1.4+5/7:1.4+3/4, black] plot (\x, {(3-4*(\x-1.4))/(3*(\x-1.4)-2)});
\draw[domain=1.4+3/4:1.4+7/9, black] plot (\x, {(4*(\x-1.4)-3)/(4-5*(\x-1.4))});
\draw[domain=1.4+7/9:1.4+4/5, black] plot (\x, {(4-5*(\x-1.4))/(4*(\x-1.4)-3)});
\draw[domain=1.4+4/5:1.4+9/11, black] plot (\x, {(5*(\x-1.4)-4)/(5-6*(\x-1.4))});
\draw[domain=1.4+9/11:1.4+5/6, black] plot (\x, {(5-6*(\x-1.4))/(5*(\x-1.4)-4)});
\draw[domain=1.4+5/6:1.4+11/13, black] plot (\x, {(6*(\x-1.4)-5)/(6-7*(\x-1.4))});
\draw[domain=1.4+11/13:1.4+6/7, black] plot (\x, {(6-7*(\x-1.4))/(6*(\x-1.4)-5)});
\draw[domain=1.4+6/7:1.4+13/15, black] plot (\x, {(7*(\x-1.4)-6)/(7-8*(\x-1.4))});
\draw[domain=1.4+13/15:1.4+7/8, black] plot (\x, {(7-8*(\x-1.4))/(7*(\x-1.4)-6)});
\draw[domain=1.4+7/8:1.4+15/17, black] plot (\x, {(8*(\x-1.4)-7)/(8-9*(\x-1.4))});
\draw[domain=1.4+15/17:1.4+8/9, black] plot (\x, {(8-9*(\x-1.4))/(8*(\x-1.4)-7)});

%\draw[domain=1.4:1.4+1/3, blue] plot (\x, {(\x-1.4)/(1-2*(\x-1.4))});
%\draw[domain=1.4+1/3:1.4+1/2, blue] plot (\x,{1/(\x-1.4)-2});
%\draw[domain=1.4+1/2:1.4+1, blue] plot (\x, {2-1/(\x-1.4)});
\draw[dotted, thin] (1.4+1,0)--(1.4+1,1);
\draw[dotted, thin] (1.4+1/2,0) -- (1.4+1/2,1);

\node at (1.4-0.05,1){\tiny$1$};
\node at (1.4+1,-0.08){\tiny$1$};
\node at (1.4-0.05,-0.08){\tiny$0$};
\node at (1.4+1/2,-0.1) {\small$\frac12$};
%\node at (1.4-0.05,1/2){\tiny$\frac 12$};
%\node at (1.4-0.05,1/3){\tiny$\frac 13$};

\draw[dotted, thin] (1.4+1/3,0) -- (1.4+1/3,1);
\draw[dotted, thin] (1.4+3/5,0) -- (1.4+3/5,1);
\draw[dotted, thin] (1.4+2/3,0) -- (1.4+2/3,1);
\node at (1.4+1/3,-0.1) {\small$\frac13$};
\node at (1.4+3/5,-0.1) {\small$\frac35$};
\node at (1.4+2/3,-0.1) {\small$\frac23$};
\node at (1.4+3/4,-0.1) {\small$\frac34$};

%\draw[step = 0.1, very thin, color=gray!40] (0,0) grid (2.5,1); \node at (0,0){$\bullet$};
\end{tikzpicture}
\caption{The graph of $\CF$.}
\label{fi:graphT}
\end{figure}

Iterating $T$ generates a new continued fraction, which we call the \emph{spliced continued fraction} (SCF) and denote by
\[
x
\;=\;
\bigl[0; 
(a_1,\varepsilon_1)_{s_1},
(a_2,\varepsilon_2)_{s_2},
\;\dots, (a_n,\veps_n)_{s_n}, \;\dots
\bigr],
\]
where 
$(a_n,\veps_n)_{s_n}\in\mathscr A:=\{(k, \veps)_{s}: k \ge 2, \veps =\pm 1, s \in \{ e, o \} \}\cup\{(1,1)_e\}$
(see Definition~\ref{def:SCF}). 
We prove that the double cover of the natural extension of~$T$
(after a simple coordinate change) is 
isomorphic to the first return map of the geodesic flow
on a carefully chosen cross section {$\pi(X)$} of $T^1\M$ (see Theorem~\ref{th:exc} and Theorem~\ref{th:com2}). %\sh{add the relevant theorem number.}
Hence, the SCF furnishes a \emph{single} symbolic coding
of \emph{all} geodesics on $\mathcal M$,
simultaneously keeping track of excursions to both cusps.

\medskip

Our main results are \emph{extreme value theorems} (EVT) for the SCF and the geodesic flow on the surface $\mathcal M$, generalizing the classical result of Galambos
for the regular continued fraction \cite{Gal72} and Pollicott on the geodesic flow of the modular surface \cite{Pol09}, respectively.

\begin{thm}\label{thm:1} For any  $y>0,$
\[
\lim_{N\to\infty}
\mu\! \; \left\{x\in(0,1): \max_{1\leq n \leq N} a_n(x) \le
\frac{2Ny}{\log(2+\sqrt3)}\, \right\}
\;=\;
\exp\left(-\frac1y\right),
\]
where $\mu$ is the $T$-invariant probability measure.
\end{thm}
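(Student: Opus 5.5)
The plan is to follow the classical Galambos--Philipp strategy: establish a tail estimate for the digit $a_1$ under $\mu$, then show that the events $\{a_n > m\}$ are nearly independent, uniformly in $m$, via a spectral gap for the transfer operator of $T$. This yields a Poisson-type limit for the maximum.

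\textbf{Step 1: the invariant density.} I will first obtain, via the natural extension (whose isomorphism with the geodesic return map is proved in Theorem~\ref{th:exc} and Theorem~\ref{th:com2}), an explicit $T$-invariant density $h$ on $(0,1)$.

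\textbf{Step 2: the tail of $a_1$.} For large $m$, $\{a_1 \ge m\}$ is a union of two short intervals. On the ECF side these lie near $0$, of length $\asymp 1/m$. On the OOCF side they lie near $1$, and are the images of the ECF ones under $\iota$. I will show
\[
\mu\{a_1\ge m\} = \frac{C}{m} + O(m^{-2}),
\]
with $C$ the sum of the density values at the endpoints $0$ and $1$, each weighted by the corresponding interval length. The constant must come out as $C = \log(2+\sqrt3)/2$; this normalization is what produces the threshold $2Ny/\log(2+\sqrt3)$. With $m_N = \lfloor 2Ny/\log(2+\sqrt3)\rfloor$ we then get $N\mu\{a_1> m_N\}\to 1/y$.

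\textbf{Step 3: quasi-independence.} Set $A_m = \{a_1 > m\}$. Using the quasi-compactness and spectral gap of the transfer operator $\mathcal{L}$ on BV or Lipschitz functions (which the paper proves, the branches being Möbius maps with bounded distortion), I aim for a mixing estimate
\[
\bigl|\mu(E\cap T^{-n}F)-\mu(E)\mu(F)\bigr| \le C\theta^n\,\mu(E)\mu(F)
\]
for $E$ in the $\sigma$-algebra of the first $k$ digits and $F$ arbitrary. This is a $\psi$-mixing estimate. It follows from bounded distortion on cylinders together with exponential convergence $\mathcal{L}^n\mathbf 1_E/\mu(E)\to h$ in sup norm.

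\textbf{Step 4: the maximum.} Given $\psi$-mixing, I apply the standard argument of Galambos, or equivalently Leadbetter's conditions $D(u_N)$ and $D'(u_N)$. Condition $D'$ reduces to the bound
\[
N\sum_{j=1}^{N/k}\mu(A_m\cap T^{-j}A_m)=o(1)\quad\text{as } k\to\infty .
\]
This follows from $\psi$-mixing for $j$ large. For small $j$ it follows from $\mu(A_m\cap T^{-j}A_m)\le C\mu(A_m)^2$, using bounded distortion on the cylinder $A_m$. The conclusion is then
\[
\mu\Bigl(\bigcap_{n\le N}T^{-(n-1)}A_{m_N}^c\Bigr)\to e^{-1/y}.
\]

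\textbf{Main obstacle.} I expect the hardest step to be the exact tail constant in Step 2, together with uniform bounded distortion near both endpoints $0$ and $1$. Step 2 requires carefully combining the invariant density's values at the two ends with the geometry of the $\iota$-conjugate branches. The bounded distortion is needed because the OOCF branches accumulate at $1$, and those near-parabolic branches must be handled; I would first check whether some iterate $T^2$ is uniformly expanding.
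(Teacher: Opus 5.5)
Your route matches the paper's: the density as the marginal of the natural-extension measure, an exact tail for $a_1$, and a spectral gap for $\mathcal{L}$ on $BV$ giving the product bound of Lemma~\ref{le:bdd}, $\mu\{\min_j a_{i_j}>L\}\le\prod_j(1+C_1\theta^{i_{j+1}-i_j})\prod_j\mu\{a_{i_j}>L\}$. The paper then finishes with Galambos--Pollicott inclusion--exclusion, separating tuples with gaps $\ge m$ from the rest. Your Leadbetter $D(u_N)/D'(u_N)$ finish uses exactly the same input and is equivalent; in particular, your $D'$ check is Lemma~\ref{le:bdd} with $k=2$.

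However, Step~2 as written is wrong: your tail constant is inverted. The density is $f_\mu(x)=C_0/\bigl((1-(2-\sqrt3)x)(1+\sqrt3x)\bigr)$ with $C_0=2/\log(2+\sqrt3)$, so $f_\mu(0)=C_0$ and $f_\mu(1)=C_0/2$. Since $\{a_1>m\}=[0,\frac{1}{2m+1})\cup(\frac{m}{m+1},1)$, one gets exactly (Lemma~\ref{le:S_N})
\[
\mu\{a_1>m\}=C_0\log\frac{2m+\sqrt3+1}{2m+\sqrt3-1}=\frac{C_0}{m}+O(m^{-2}).
\]
Each end contributes $C_0/(2m)$, consistent with $\iota_*\mu=\mu$. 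So $C=2/\log(2+\sqrt3)$, not $\log(2+\sqrt3)/2$. With your value you would get $N\mu\{a_1>m_N\}\to(\log(2+\sqrt3))^2/(4y)\neq1/y$, and the threshold in the statement would not be recovered. Your description of how to compute $C$ (endpoint densities weighted by interval lengths) is right; only the asserted value is wrong.

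Two smaller points. First, for $E\in\sigma(a_1,\dots,a_k)$ the $\psi$-mixing rate is $\theta^{n-k}$ for $n\ge k$, coming from $\|\widetilde{\mathcal{L}}^k1_E\|_{v,\mu}=O(\mu(E))$; it is not $\theta^n$, which would fail already for $n=0$. Second, the obstacle you anticipate does not arise. The ECF/OOCF accelerations remove the parabolic points, so every inverse branch satisfies $|h_A'|\le1/4$ and $|h_A''|\le3|h_A'|$. Thus $T$ itself is uniformly expanding with bounded distortion, and no iterate is needed.
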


The proof combines
\begin{enumerate}
    \item the spectral gap of the Ruelle–Perron–Frobenius operator for~$T$,
 \item an exponential‐mixing estimate for the induced Gibbs measure,
and
 \item a delicate geometric correspondence
between partial quotients and geodesic excursions,
adapting arguments of Pollicott (see \cite[Theorem 2]{Pol09}).
\end{enumerate}

\subsection{Extreme value theorem for cusp excursions.}
Using the EVT (Theorem~\ref{thm:1}) for the SCF, 
we obtain the EVT for the geodesic flow on $\mathcal M$. Denote by $\mathcal T$ the tessellation of $\bH$ by $\Theta$-translates of the quadrilateral $\mathcal Q$ which has cusps $0,\pm 1,$ and $\infty.$ See Figure~\ref{fi:tes}.

\begin{thm}\label{thm:1.2}
There exists an explicit constant $C>0$ such that
for $y>0$, \[
\lim_{T\to\infty}
m\Bigl\{v\in T^1\mathcal M:\;
\max_{0\le t\le T}d_\M(\pi(i),\gamma^v(t))-\log T \le \log(Cy)\Bigr\}
\;=\;
e^{-1/y},
\]
where $m$ is the Liouville measure on $T^1 \mathcal M$.
\end{thm}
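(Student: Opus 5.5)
We derive Theorem~\ref{thm:1.2} from Theorem~\ref{thm:1}, following Pollicott's reduction for the modular surface \cite[Theorem 2]{Pol09}. We use the cross section $\pi(X)\subset T^1\M$ and the isomorphism (Theorems~\ref{th:exc} and \ref{th:com2}) between the first return map of the geodesic flow to $\pi(X)$ and the double cover of the natural extension of $T$. This realizes the flow, up to a null set, as a suspension flow over the natural extension $(\widehat T,\widehat\mu)$ with a roof function $r$. By Abramov's formula,
\[
m = \frac{\widehat\mu\times dt}{\int r\,d\widehat\mu}.
\]
Abramov's formula and the entropy of $T$ should give $\int r\,d\widehat\mu = \pi^2/(c\log(2+\sqrt3))$ up to normalization, which explains the constant in Theorem~\ref{thm:1}.

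\textbf{Step 1 (geometric estimate).} Show that the maximal depth of a cusp excursion between consecutive returns is controlled by the current digit. Concretely, if $v$ lies on the section with coordinates $(x,y)$ and the $n$-th digit is $a_n$, then the maximal distance from $\pi(i)$ along the segment $\gamma^v([t_{n-1},t_n])$ equals
\[
\log a_n + \log h(\widehat T^{n}(x,y)) + O(1).
\]
Here $h$ is an explicit bounded function depending on the future and past tails. For the $e$-type digits the excursion goes into the cusp $\Theta.\infty$; for the $o$-type digits we conjugate by $\iota$ and it goes into $\Theta.1$. This is done using horoball heights: a geodesic with endpoints $\xi_-<0<\xi_+$ near a cusp has maximal height $(\xi_+-\xi_-)/2$. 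I would tune the correction precisely enough that the error vanishes as $a_n\to\infty$.

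\textbf{Step 2 (replace flow time by return count).} By the ergodic theorem for $\widehat T$, the number of returns $N(T)$ up to time $T$ satisfies
\[
N(T)\sim \frac{T}{\int r\,d\widehat\mu}\quad\text{a.e.}
\]
Hence
\[
\max_{0\le t\le T} d_\M(\pi(i),\gamma^v(t)) = \max_{n\le N(T)}\bigl(\log a_n + \text{correction}\bigr)+o(1).
\]
Because only large digits matter, the correction can be averaged. The event $\{a_n\ge k\}$ together with the correction term has asymptotic $\widehat\mu$-measure $c'/k$, with an explicit constant $c'$. This comes from computing $\mu(a_1\ge k)$ against the invariant density near $0$ and $1$, and integrating $h$ over the natural-extension fiber.

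\textbf{Step 3 (extreme value law).} Rerun the proof of Theorem~\ref{thm:1} with the modified observable $a_n\,h\circ\widehat T^n$ in place of $a_n$. The spectral gap and exponential mixing still give a Poisson limit for exceedances, because the perturbation depends on finitely many digits up to exponentially small error. We then substitute $N=T/\int r$. Finally, the $\widehat\mu$-statement transfers to $m$ because the event is asymptotically invariant under shifting the starting time by at most one return time: the roof is unbounded, but we only need a measure $o(1)$ set. This yields $e^{-1/y}$ with $C=c'/\int r\,d\widehat\mu$, which is made explicit by the computations above.

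\textbf{Main obstacle.} The hardest step is Step 1 together with the correction in Step 3. There are two difficulties. First, the maximum distance is not a function of $a_n$ alone, so the extreme value law must be proved for a two-sided observable on the natural extension rather than for the digits themselves. Second, the elliptic point and the two inequivalent cusps force a case-by-case treatment of the $e$ and $o$ branches, including the special digit $(1,1)_e$. I would first try to show that $\log h$ is uniformly bounded and H\"older on cylinders. The exceedance probability then becomes $\int \mathbf 1\{a_1 h\ge k\}\,d\widehat\mu\sim c'/k$ by a direct density computation, and the mixing argument of Theorem~\ref{thm:1} goes through verbatim.
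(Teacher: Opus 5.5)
The overall architecture matches the paper's: suspension over the double cover of the natural extension, comparison of excursion depth with $a_n$, replacing $T$ by $N(T,v)\approx T/C^\ast$ (Lemma~\ref{lem:concentration_returns}), the digit EVT, and transfer to $m$. Your constant $c'/\int r\,d\widehat\mu$ equals the paper's $C=C_0/C^\ast$ once $c'=C_0$. Note that $\int r\,d\widehat\mu$ is the mean return time $C^\ast$ of Theorem~\ref{th:exctime}. It does not explain the constant in Theorem~\ref{thm:1}, which comes from the tail $\mu(S_N)\sim C_0/N$ in Lemma~\ref{le:S_N}. Your transfer to $m$, by near-invariance under a time shift of at most one return, looks like a workable substitute for the paper's route (absolutely continuous pushforward of $m$ plus Theorem~\ref{th:Gal2}), since the roof grows only logarithmically in the digits.

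\textbf{The gap is in Steps~1 and~3.} You treat the tail dependence of the excursion depth as a genuine bounded multiplicative correction $h$. You then propose an EVT for the two-sided observable $a_n\,h\circ\widehat T^n$ and assert that the mixing argument goes through verbatim. It does not. Lemma~\ref{le:bdd} decorrelates only events of the form $T^{-i}S_N$, where $S_N$ is a union of forward cylinders, and it relies on $v(\widetilde{\mathcal L}\,1_{S_N})=O(\mu(S_N))$. An event depending on the past coordinate $y$ is not of this form, and you give no substitute.

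The missing idea makes Step~3 unnecessary, and it is already implicit in your own remark about the top of a half circle. Normalize by applying $\tilde\iota$ when $s_n=o$; this fixes $i$ and swaps the cusps. The $n$-th excursion then lies on $\gamma_n^\ast$, whose endpoints satisfy $\alpha_n^\ast\in[2a_n-1,2a_n+1]$ and $-\beta_n^\ast\in[-\sqrt3,2-\sqrt3]$. Hence $a_n-\tfrac{3-\sqrt3}{2}\le e^{h_n}\le a_n+\tfrac{\sqrt3+1}{2}$. The tails enter only as an \emph{additive} $O(1)$ in $e^{h_n}$, so your $h$ is $1+O(1/a_n)$. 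This is invisible at the scale $a_n\asymp N$ and is absorbed by Lemma~\ref{le:A_N0}. Therefore $c'=C_0$, and the one-sided digit EVT suffices.

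Your per-segment formula in Step~1 is also false as stated. Near the end of $[T_{n-1},T_n]$ the geodesic is already climbing into the next cusp. At $\eta$ its distance from $\pi(i)$ is about $\tfrac12\log a_{n+1}$, which exceeds $\log a_n$ whenever $a_{n+1}\gg a_n^2$, so no bounded $h$ can work. What does hold, and is all that is needed, is the global bracketing of Lemma~\ref{le:5.4}: $e^{-\zeta_N}(A_N-c)\le M(T_N,v)\le e^{\eta_N}(A_{N+1}+c)$. The extra digit $a_{N+1}$ on the upper side is then handled by the sandwich between $N_T^-$ and $N_T^+$.
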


The proof of Theorem~\ref{thm:1.2} uses the following key properties of the SCF:\\
(1) For $v\in T^1\M$, we cut $\gamma^v$ at each return point to the cross section to obtain a sequence of geodesic segments.
For a lift $\tilde\gamma^v$, the sequence of SCF partial quotients $a_n$ of $x = 1/|\tilde\gamma^v_\infty|$ coincides with the forward tail of the bi-infinite sequence recording the number of quadrilaterals crossed by $\tilde\gamma^v$ in the tessellation $\mathcal T$ (see Figure~\ref{fi:tes}, Definition~\ref{de:exc} and Lemma~\ref{superlevelsetforexcursion}).\\
(2) The supremum of distance
$$\sup_{0\le t\le T} d_\M(\pi(i),\gamma^v(t))$$
translates into approximately the log of the maximal digit $A_N:=\max_{1\le n\le N}a_n$ most of the time; when it does not, it is bounded above by $\log A_{N+1}$ (see Section~\ref{sec:5}).

We would like to emphasize that the novelty here lies in geometric cusp excursions measured via height, not symbolic winding or combinatorial depth.
For a finitely generated essentially free Fuchsian group $\Gamma$, 
Galambos' theorem for the number of parabolic generators, which is called the maximal cuspidal winding number of the geodesic flow of $\Gamma\backslash T^1\bH$, was established in \cite{JKS13}.
Limiting distributions for excursions into a single fixed cusp were previously established for the geodesic flow on a finite volume, non-compact $(d+1)$-dimensional manifold of curvature $-1$ \cite{DFL22}.

Theorem~\ref{thm:1} is the first extreme value theorem for a non-essentially free group.
Moreover, to the best of our knowledge, Theorem~\ref{thm:1.2} provides the first extreme value distribution result for geometrically measured height excursions, rather than symbolic cuspidal windings, on a hyperbolic surface with more than one cusp, taking simultaneous excursions into multiple cusps.

In another context, for the Hurwitz complex continued fractions (HCF)  with $\mathbb Z[i]$, the extreme value theorem (for digits of the HCF) has been established \cite{Kir21}.
In \cite{BP24}, the extreme value theorem was extended to $\mathbb Z[\sqrt{-d}]$ for $d=2,3,7,11$; based on this, the theorem for the geodesic flow on the Bianchi orbifolds was also proved.
On the other hand, an extreme value theorem has been obtained for the unipotent actions of $\mathrm{SL}(n,\mathbb Z)\backslash \mathrm{SL}(n,\mathbb R)$ \cite{KM22} and \cite{MSY25}.

\medskip
The paper is organized as follows. 
In Section~\ref{sec:2},
we define the spliced continued fraction and construct the natural extension of~$T$.
In Section~\ref{sec:3}, we relate the first return map of the geodesic flow on $\M$ to a carefully chosen cross section to the natural extension of the SCF.
Section~\ref{sec:4} contains the proof of the spectral gap of the transfer operators and the proof of the Galambos‐type extreme value theorem. 
Finally, Section~\ref{sec:5}
translates the symbolic result into the geometric extreme value law
for geodesic heights.
%{\color{red}and discusses further questions.} An analogous geometric extreme value law on geodesic height for general Fuchsian groups or Kleinian groups remains open {\color{red}(see ... for partial results). }

\medskip
\noindent\textbf{Acknowledgements.} 
The authors thank
Dong Han Kim and Lingmin Liao
for helpful conversations.

The first author is partially supported by the KKP 139502 project.
The second author is partially supported by the Institute for Basic Science (IBS-R003-D1) and BK21 SNU Mathematical Sciences Division.
The second and third authors are supported by the National Research Foundation of Korea under Project no. RS-2025-00515082. The third author is supported by NRF of Korea under Project no. RS-2023-00301976, and RS-2025-02293115.

\section{The spliced continued fraction $\CF$}\label{sec:2}
\subsection{The spliced Gauss map, invariant measure and the transfer operator}
The spliced continued fraction map $\CF:[0,1]\to[0,1]$, defined in the introduction,  is piecewise Möbius with countably many branches, each corresponding to a symbol 
$(k,\epsilon)_s$. 
The intervals 
$I_{(k,\epsilon)_s}$ 
partition 
$(0,1)$ and determine the SCF digit. The SCF map $T$ is given by
\begin{equation}\label{eqn:2.1}
\CF(x) = \begin{cases}
\frac{1}{x}-2 & \text{if }x\in I_{(1,+1)_e} : = [\frac{1}{3},\frac{1}{2}],\vspace{1ex}\\
\frac{1}{x}-2k & \text{if }x\in I_{(k, +1)_e} :=[\frac{1}{2k+1},\frac{1}{2k})~\text{for }k\ge 2,\vspace{1ex}\\
2k-\frac{1}{x} & \text{if }x\in  I_{(k, -1)_e} := [\frac{1}{2k},\frac{1}{2k-1})~\text{for }k\ge 2,\vspace{1ex}\\
\frac{kx-(k-1)}{k-(k+1)x} & \text{if }x\in I_{(k, -1)_o} :=(\frac{k-1}{k},\frac{2k-1}{2k+1}]~\text{for }k\ge 2, \vspace{1ex} \\
\frac{k-(k+1)x}{kx-(k-1)} & \text{if }x\in I_{(k, +1)_o} :=(\frac{2k-1}{2k+1},\frac{k}{k+1}]~\text{for }k\ge 2,
\end{cases}
\end{equation}
with $T(0)=0$ and $T(1)=1$.

\noindent \textbf{Remark.}
Let $R_1(x)=2- \frac{1}{x}$, $R_2(x)=\frac{1}{x}-2$, $R_3(x)=\frac{x}{1-2x}$.
The Romik map $T_r$ is defined by $T_r(x) = R_1(x)$ for $x \in [1/2,1]$, $T_r(x) = R_2(x)$ for $x \in [1/3,1/2]$, and $T_r(x) = R_3(x)$ for $x \in [0,1/3]$ (see \cite{Rom08}).
In the first three cases, $T(x)=R_1R_3^{k-1}(x)$ or $R_2R_3^{k-1}(x)$, which equals the difference between $1/x$ and the even integer nearest $1/x$.
In the latter two cases, $T(x)=R_3R_1^{k-1}(x)$ or $R_2R_1^{k-1}(x)$.
In other words, $T$ is obtained by combining the two accelerations (the so-called jump transformations) of $T_r$ associated with intervals $[0,1/2]$ and $[1/3,1]$.
See Figure~\ref{fi:graphT} for the graph of $\CF$. 

Recall from the introduction that we denote the set of the SCF digits by 
$$\mathscr A:=\big\{(k, \veps)_{s}: k \ge 2, \veps =\pm 1, s \in \{ e, o \} \big\}\cup\big\{(1,1)_e\big\}.$$

   \begin{defn}[spliced continued fraction expansion]\label{def:SCF}

Define $A: (0,1) \to \mathscr{A}$ by
\[
    A(x) := 
        (k, \veps)_{s} \;\; \text{if }x\in I_{(k,\veps)_s}.\\
   \]  
  Letting $(a_{n}, \veps_{n})_{s_{n}}
:= A(\CF^{n-1}x) \in \mathscr{A},$ for $x\in (0,1)$, we define a \emph{spliced continued fraction expansion} as follows:
$$x=[0;(a_{1},\veps_{1})_{s_{1}}, (a_{2},\veps_{2})_{s_{2}}, \dots, (a_{n},\veps_{n})_{s_{n}}, \dots ].$$ 
Every irrational number $x$ admits a unique well-defined spliced continued fraction expansion. 
For $x\in \mathbb Q$, the expansion has finite length
%, namely $[0;(a_{1},\veps_{1})_{s_{1}}, (a_{2},\veps_{2})_{s_{2}}, \dots, (a_{n},\veps_{n})_{s_{n}}]$,} 
since $T^{n}x=0$ or $1$ for some $n$.
%If $x\not\in\mathbb Q$, then it has an expansion of infinite length.

\end{defn}

%Note that $\CF(x) = \CF_e(x)$ for $x\in[0,\frac12]$ and $\CF(x) = \CF_{o}(x)$ for $x\in[\frac13,1]$.

% \begin{tikzpicture}
% \begin{axis}[
%     domain=0:1,
%     samples=300,
%     axis lines=middle,
%     xlabel={$x$},
%     ylabel={},
%     ymin=0, ymax=2,
%     xtick={0,1},
%     ytick=\empty,
%     grid=none,
%     thick
% ]

% % f(x) = (2 / log(2 + sqrt(3))) / ((1 - (2 - sqrt(3))x)(1 + sqrt(3)x))
% % 상수: sqrt(3) ≈ 1.73205, 2 - sqrt(3) ≈ 0.26795, log(2 + sqrt(3)) ≈ 1.317
% \addplot[
%     blue,
%     domain=0:1,
%     samples=300
% ]
% { (2/1.317) / ((1 - 0.26795 * x)*(1 + 1.73205 * x)) };

% \end{axis}
% \end{tikzpicture}

%We partition the unit interval into branches of $\CF$ and denote
%\begin{align*}
 %   I^{e}_{k, +} := \left[\frac{1}{2k+1}, \frac{1}{2k}\right], k\ge1; \;\;\;\;&\;\;\;\;\;\;
 %   I^{e}_{k, -} := \left[\frac{1}{2k}, \frac{1}{2k-1}\right], k\ge2;\\
 %   I^{o}_{k, -} :=\left[\frac{k-1}{k}, \frac{2k-1}{2k+1} \right], k\ge 2; &\;\;\;\;\;
  %  I^{o}_{k, +} :=\left[\frac{2k+1}{2k+3}, \frac{k+1}{k+2} \right], k \ge 1.
%\end{align*}
The restriction of $T$ to each branch is bijective, and the inverse maps, called inverse branches, are as follows:
\begin{equation}\label{eq:inv_br}
 \begin{aligned}
 h_{(a, \veps)_e}(x):= T|_{I_{(a, \veps)_e}}^{-1} = \frac{1}{2a + \veps x}\qquad\text{and}\qquad
 h_{(a, \veps)_o}(x):= T|_{I_{(a, \veps)_o}}^{-1} =\frac{1}{1 + \dfrac{1}{(a-\overline{\veps}) + \cfrac{\veps}{1+x}}},
 \end{aligned}
 \end{equation}
 where we set $\overline{\veps}=\max(0,\veps)$.
Compositions of the inverse branches provide a continued fraction expansion, which is a combination of the above forms of inverse branches corresponding to the letters in $\mathscr A$.
In other words, if $x\in I_{(a_1,\varepsilon_1)_{s_1}}\cap T^{-1}I_{(a_2,\varepsilon_2)_{s_2}}\cap\cdots\cap T^{-n+1}I_{(a_n,\varepsilon_n)_{s_n}}$, then we have 
\begin{equation}\label{eq:inv_br2}
x = h_{(a_1,\varepsilon_1)_{s_1}}\circ h_{(a_2,\varepsilon_2)_{s_2}}\circ \cdots\circ h_{(a_n,\varepsilon_n)_{s_n}}(T^n(x)).
\end{equation}

For example, $$\left\{\frac{-1+\sqrt{13/5}}{2}\right\}= \bigcap_{i=0}^\infty \left(T^{-2i}I_{(2,-1)_e}\cap T^{-2i-1}I_{(3,1)_o}\right),$$ and
$$\frac{-1+\sqrt{13/5}}{2}=\dfrac{1}{2\cdot{\bf 2}+\dfrac{\bf -1}{1+\dfrac{1}{{\bf 3}-1+\dfrac{\bf 1}{1+\dfrac{1}{2\cdot {\bf 2}+\dfrac{\bf -1}{1+\dfrac{1}{{\bf 3}-1+\cdots}}}}}}}
=[0;\overline{(2,-1)_e, (3,1)_{o}}].$$
Here, the overline indicates that the block $(2,-1)_e, (3,1)_{o}$ repeats periodically.

\subsection{Natural extension of $T$ and dual SCF}

To analyze the excursions of the geodesic flow, we shall introduce a symbolic coding of geodesic flows based on their endpoints in Section~\ref{sec:3}. This coding involves the SCF expansion of the forward endpoint, together with the \emph{dual SCF expansion} of the backward endpoint. Let us first describe the dual continued fraction precisely.

According to \cite{Rok49}, there exists a unique (up to isomorphism) minimal invertible measure-preserving dynamical system 
\((\Omega, \T, \bar{\mu})\) such that \(([0,1], T, \mu)\) is a factor of \((\Omega, \T, \bar{\mu})\).

Such a system {$(\Omega, \T,\bar\mu)$} is called the \emph{natural extension} of $T$.
For a given continued fraction map $T$, each inverse branch of the dual continued fraction map $\F$ is given by the second coordinate of the natural extension $\T$ \cite{Pan22}.
We construct a planar model of the natural extension $\T$ of $T$ and the dual SCF as follows. 
A similar approach was first considered in \cite{NIT77,Nak81}.

Since the SCF map can be represented by a M\"obius transformation on the forward endpoints of geodesics of $\M$ lifted to the hyperbolic plane, we induce the inverse branches by extending the M\"obius transformation to the pairs of forward and backward endpoints.

We begin by introducing the inverse branches of the dual SCF.
For $(b,\eta)_s\in\mathscr{A}$, we define
\begin{equation}\label{eq:inv.barT}
 \bar{h}_{(b,\eta)_e}(y) = \frac{\eta}{2b+y},\qquad\text{ and }\qquad
 \bar{h}_{(b,\eta)_{o}}(y) = \frac{1}{1+\frac{\eta}{(b-\overline{\eta})+\frac{1}{1+y}}},
 \end{equation}
 where we set $\overline{\eta}:=\max(0,\eta)$.
This definition is canonical in the sense that with the SCF together with the dual SCF, we obtain the shift map on bi-infinite sequences, which are realized via the diagonal action of M\"obius transformations on pairs of geodesic endpoints as in \eqref{eq:rho.f} and \eqref{eq:rho.b}.

The domain of the dual SCF is defined by
$$\mathbb I = \overline{\left\{\bar{h}_{(b_1,\eta_1)_{t_1}}\circ\bar{h}_{(b_2,\eta_2)_{t_2}}\circ\cdots\circ\bar{h}_{(b_n,\eta_n)_{t_n}}(0): (b_i,\eta_i)_{t_i}\in\mathscr{A},\ n\ge 1\right\}}.$$
\begin{lem}
The domain of the dual SCF map satisfies
$$\mathbb I = [\sqrt{3}-2,\sqrt{3}].$$    
\end{lem}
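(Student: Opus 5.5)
The plan is to show that $J:=[\sqrt3-2,\sqrt3]$ is a closed set that is invariant under all dual inverse branches $\bar h_{(b,\eta)_t}$. I will also show that the images $\bar h_{(b,\eta)_t}(J)$ tile $J$, up to endpoints, and that the cylinder intervals shrink. The two inclusions $\mathbb I\subseteq J$ and $J\subseteq \mathbb I$ then follow by standard arguments.

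\emph{Step 1: invariance and tiling.} I compute $\bar h_{(b,\eta)_t}(J)$ directly for each type of digit, using monotonicity of the Möbius maps on $J$.
\begin{itemize}
\item \emph{Even branches with $b\ge2$.} For $y\in J$ we have $2b+y\in[2b+\sqrt3-2,\,2b+\sqrt3]$. So the branches with $\eta=+1$ fill $(0,2-\sqrt3]$, since $1/(2+\sqrt3)=2-\sqrt3$. The branches with $\eta=-1$ fill $[\sqrt3-2,0)$. The extreme point $\sqrt3-2$ is the fixed point of $\bar h_{(2,-1)_e}$, because $y=-1/(4+y)$ gives $y^2+4y+1=0$.
\item \emph{The branch $(1,1)_e$.} It maps $J$ onto $[1/(2+\sqrt3),\,1/\sqrt3]=[2-\sqrt3,1/\sqrt3]$.
\item \emph{Odd branches.} Set $u=1/(1+y)$, which ranges over $[(\sqrt3-1)/2,(\sqrt3+1)/2]$, and $z=(b-\bar\eta)+u$.
  \begin{itemize}
  \item For $\eta=+1$ we have $z\ge(1+\sqrt3)/2$. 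Hence $1+1/z\le\sqrt3$, so these images fill $[1/\sqrt3,1)$.
  \item For $\eta=-1$ we have $z\ge(3+\sqrt3)/2$. Hence $1-1/z\ge 1/\sqrt3$, so these images fill $(1,\sqrt3]$. The value $\sqrt3$ is attained when $b=2$ and $y=\sqrt3-2$.
  \end{itemize}
\end{itemize}
Consecutive images in $b$ share endpoints, so the images of $J$ cover $J$ with disjoint interiors. In particular every $\bar h_{(b,\eta)_t}$ maps $J$ into $J$.

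\emph{Step 2: $\mathbb I\subseteq J$.} Since $0\in J$ and $J$ is invariant under every $\bar h$, every point $\bar h_{w}(0)$ lies in $J$. Since $J$ is closed, the closure $\mathbb I$ lies in $J$.

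\emph{Step 3: $J\subseteq\mathbb I$.} Iterating Step 1, for each $n$ the cylinders $\bar h_{w}(J)$ with $|w|=n$ cover $J$. Given $y\in J$, pick nested words $w_n$ with $y\in \bar h_{w_n}(J)$. Since $0\in J$, the point $\bar h_{w_n}(0)$ also lies in $\bar h_{w_n}(J)$. So it suffices to show that $\operatorname{diam}\bar h_{w}(J)\to0$ uniformly as $|w|\to\infty$.

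I expect this uniform shrinking to be the main obstacle. I would first check single branches.
\begin{itemize}
\item Each even branch has derivative $(2b+y)^{-2}$. On $J$ this is at most $(2+\sqrt3-2)^{-2}=1/3$ for $(1,1)_e$, and smaller for $b\ge2$.
\item For the odd branches I would write them as compositions of $y\mapsto 1/(1+y)$ and even-type steps, and bound the derivative by a cross-ratio or Koebe-type estimate.
\end{itemize}
If some branch fails to be a strict contraction near an endpoint, the fallback is to show that a fixed iterate $\bar h_w$ with $|w|=2$ is uniformly contracting on $J$. Alternatively, one can note that a nested sequence of cylinders with positive limiting length would give an interval of points sharing one infinite dual expansion. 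This is impossible, because distinct irrationals in $J$ have distinct expansions: they correspond to distinct backward endpoints of geodesics, which are separated by the $\Theta$-tessellation. Granting the shrinking, $\bar h_{w_n}(0)\to y$, so $y\in\mathbb I$.
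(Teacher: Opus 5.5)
Your route is the same as the paper's. Both arguments check that each $\bar h_{(b,\eta)_t}$ maps $J=[\sqrt3-2,\sqrt3]$ into itself (the paper evaluates the endpoints and uses monotonicity), note that the first-level images $\bar I_{(b,\eta)_t}$ tile $J$, and conclude that $\{\bar h_w(0)\}$ is dense because the cylinders shrink.

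The paper only asserts that the cylinder lengths tend to $0$, so the step you flag as the main obstacle is not proved there either. It closes in one line, and you do not need your fallback, which as phrased (via uniqueness of expansions) essentially restates what is to be proved. On $J$ you have $1+y\ge\sqrt3-1$, and $b-\overline{\eta}+\eta\ge1$, so the odd branches satisfy
\[
|\bar h_{(b,\eta)_o}'(y)|=\frac{1}{\bigl((b-\overline{\eta}+\eta)(1+y)+1\bigr)^2}\le\frac{1}{\bigl((\sqrt3-1)+1\bigr)^2}=\frac13 .
\]
Together with your even-branch bound, every branch is a $\tfrac13$-contraction of $J$. Hence $|\bar h_w(J)|\le 2\cdot 3^{-|w|}$ uniformly in $w$.

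Two small corrections:
\begin{itemize}
\item $\sqrt3$ is the fixed point of $\bar h_{(2,-1)_o}$, so it is attained at $y=\sqrt3$. The point $y=\sqrt3-2$ is sent to $2-1/\sqrt3$, not to $\sqrt3$.
\item The first-level images cover only $J\setminus\{0,1\}$, so the level-$n$ cylinders miss a countable set. This is harmless: you only need density, and $\mathbb I$ is closed.
\end{itemize}
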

\begin{proof}
By direct calculation, for any $(b,\eta)_t$ in $\mathscr A$, we have $$\bar{h}_{(b,\eta)_t}(\sqrt{3}-2), \ \bar{h}_{(b,\eta)_t}(\sqrt{3})\in[\sqrt{3}-2,\sqrt{3}].$$
From the strict monotonicity of $\bar{h}_{(b,\eta)_t}$, we have $\bar{h}_{(b,\eta)_t}[\sqrt{3}-2,\sqrt{3}]\subset[\sqrt{3}-2,\sqrt{3}]$.
By induction, for any $n\ge1$, $\bar{h}_{(b_1,\eta_1)_{t_1}}\circ\bar{h}_{(b_2,\eta_2)_{t_2}}\circ\cdots\circ\bar{h}_{(b_n,\eta_n)_{t_n}}(0)\in [\sqrt{3}-2,\sqrt{3}]$ and $\mathbb I\subset [\sqrt{3}-2,\sqrt{3}]$.
The minimum and maximum are given by
\begin{align*}
& \min\mathbb I = \lim_{n\to \infty}(\bar{h}_{(2,-1)_e})^n(0)
=\frac{-1}{4+\frac{-1}{4+\ddots}}=\sqrt{3}-2, \\
& \max\mathbb I = \lim_{n\to \infty}(\bar{h}_{(2,-1)_o})^n(0)=\frac{1}{1+\frac{-1}{2+\frac{1}{1+\frac{1}{1+\frac{-1}{2+\frac{1}{1+\ddots}}}}}}=\sqrt{3}.
\end{align*}

Denote by $\bar{I}_{(b,\eta)_t}$ the closed interval with endpoints $\bar{h}_{(b,\eta)_t}(\sqrt{3})$ and $\bar{h}_{(b,\eta)_t}(\sqrt{3}-2)$.
Explicitly, we obtain
$$\begin{cases}
\bar{I}_{(b,-1)_e}=\left[\frac{-1}{2b-2+\sqrt{3}},\frac{-1}{2b+\sqrt{3}}\right],~b\ge 2, 
&\bar{I}_{(b,+1)_e}=\left[\frac{1}{2b+\sqrt{3}},\frac{1}{2b-2+\sqrt{3}}\right],~b\ge 1, \vspace{1ex}\\
\bar{I}_{(b,+1)_o}=\left[\frac{2b-3+\sqrt{3}}{2b-1+\sqrt{3}},\frac{2b-1+\sqrt{3}}{2b+1+\sqrt{3}}\right],~b\ge 2,
&\bar{I}_{(b,-1)_o}=\left[\frac{2b+1+\sqrt{3}}{2b-1+\sqrt{3}},\frac{2b-1+\sqrt{3}}{2b-3+\sqrt{3}}\right],~b\ge2.
\end{cases}$$
Therefore, ${\bar{I}_{(b,\eta)_t}}$, $(b,\eta)_t\in\mathscr A$ form a partition of $[\sqrt{3}-2,\sqrt{3}]$.
By induction, the intervals $\bar{I}_{(b_1,\eta_1)_{t_1}\cdots(b_n,\eta_n)_{t_n}}$ between
$$\bar{h}_{(b_1,\eta_1)_{t_1}}\circ\bar{h}_{(b_2,\eta_2)_{t_2}}\circ\cdots\circ\bar{h}_{(b_n,\eta_n)_{t_n}}(\sqrt{3})\quad \text{ and }\quad \bar{h}_{(b_1,\eta_1)_{t_1}}\circ\bar{h}_{(b_2,\eta_2)_{t_2}}\circ\cdots\circ\bar{h}_{(b_n,\eta_n)_{t_n}}(\sqrt{3}-2)$$
forms a partition {of $[\sqrt{3}-2,\sqrt{3}]$}.
Since the length of $I_{(b_1,\eta_1)_{t_1}\cdots(b_n,\eta_n)_{t_n}}$ goes to $0$ as $n\to \infty$, $$\left\{\bar{h}_{(b_1,\eta_1)_{t_1}}\circ\bar{h}_{(b_2,\eta_2)_{t_2}}\circ\cdots\circ\bar{h}_{(b_n,\eta_n)_{t_n}}(0): (b_i,\eta_i)_{t_i}\in\mathscr{A}\right\}$$ is dense in $[\sqrt{3}-2,\sqrt{3}]$.
\end{proof}
For all $x\in\mathbb I$, there exists either a finite or an infinite sequence
\begin{equation}\label{eq:DSCF}\langle (b_1,\eta_1)_{t_1},(b_2,\eta_2)_{t_2},\dots,(b_n,\eta_n)_{t_n},\dots\rangle, \quad\text{ or }\quad\langle (b_1,\eta_1)_{t_1},(b_2,\eta_2)_{t_2},\dots (b_n,\eta_n)_{t_n}\rangle
\end{equation}
such that 
$$x = \lim_{n\to\infty}\bar{h}_{(b_1,\eta_1)_{t_1}}\circ\bar{h}_{(b_2,\eta_2)_{t_2}}\circ\cdots\circ\bar{h}_{(b_n,\eta_n)_{t_n}}(0),\;\; \text{ or }\;\; x = \bar{h}_{(b_1,\eta_1)_{t_1}}\circ\bar{h}_{(b_2,\eta_2)_{t_2}}\circ\cdots\circ\bar{h}_{(b_n,\eta_n)_{t_n}}(0),$$
respectively.
In our convention, $0$ and $1$ correspond to the empty sequence.

\begin{thm}
Let $\Omega: =[0,1]\times[\sqrt{3}-2,\sqrt{3}].$
The map $\T:\Omega\to\Omega$ given by
%$$\T(x,y) = \begin{cases}
$$\T(x,y) = (\CF(x),\bar{h}_{(a_1,\varepsilon_1)_{s_1}} (y)) \quad\text{ if }x\in I_{(a_1,\varepsilon_1)_{s_1}}$$
is the natural extension of $T$ equipped with a $\T$-invariant probability measure
$$d\bar\mu = \frac{dxdy}{\log (2+ \sqrt{3})(1+xy)^2}.$$
\end{thm}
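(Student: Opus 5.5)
The plan is to verify three things in order: that $\T$ is a bijection of $\Omega$ modulo null sets, that $\bar\mu$ is $\T$-invariant, and that $\T$ is minimal among invertible extensions of $T$, so it is the natural extension in Rokhlin's sense.

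\textbf{Step 1: bijectivity.} Put $\Omega_{(a,\varepsilon)_s}:=I_{(a,\varepsilon)_s}\times[\sqrt3-2,\sqrt3]$. On this cylinder, $\T$ maps onto $T(I_{(a,\varepsilon)_s})\times \bar I_{(a,\varepsilon)_s}$. Every branch of $T$ in \eqref{eqn:2.1} is full, so $T(I_{(a,\varepsilon)_s})=[0,1]$ up to endpoints. By the preceding lemma, the intervals $\bar I_{(b,\eta)_t}$ partition $[\sqrt3-2,\sqrt3]$. Hence the images $\T(\Omega_{(a,\varepsilon)_s})=[0,1]\times\bar I_{(a,\varepsilon)_s}$ tile $\Omega$, and $\T$ is invertible a.e. The inverse is
\[
(x,y)\mapsto\bigl(h_{(b,\eta)_t}(x),\,\bar h_{(b,\eta)_t}^{-1}(y)\bigr)\quad\text{for } y\in\bar I_{(b,\eta)_t}.
\]

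\textbf{Step 2: invariance of $\bar\mu$.} The key algebraic observation is that each branch is the diagonal action of a single Möbius map $g\in\mathrm{GL}(2,\mathbb R)$ on the pair $(x,-1/y)$, or of a conjugate of $g$. In the even case, $x\mapsto 1/x-2a$ (for $\varepsilon=+1$) is the map $g(z)=1/z-2a$. With $u=-1/y$ we get $g(u)=-y-2a$, and $-1/g(u)=1/(2a+y)=\bar h_{(a,+1)_e}(y)$. The sign $\varepsilon=-1$ works the same way, and the odd branches are checked by composing the elementary maps $R_1,R_2,R_3$ from the Remark.

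The measure $\frac{dx\,dy}{(1+xy)^2}$ is, under $u=-1/y$, the measure $\frac{dx\,du}{(x-u)^2}$, which is invariant under the diagonal action of $\mathrm{PGL}(2,\mathbb R)$. Therefore $\T$ preserves $\bar\mu$ on each cylinder, and with Step 1 it preserves $\bar\mu$ globally. The branches where $\varepsilon$ introduces an orientation reversal, such as $\bar h(y)=\eta/(2b+y)$, need care with signs; one checks that the Jacobian of $(x,y)\mapsto(Tx,\bar h(y))$ equals $\bigl(\frac{1+T(x)\bar h(y)}{1+xy}\bigr)^2$ in absolute value. This is a direct computation for each of the five branch types, using $|T'(x)|$ and $|\bar h'(y)|$.

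For the normalization, I will compute
\[
\int_0^1\!\!\int_{\sqrt3-2}^{\sqrt3}\frac{dy\,dx}{(1+xy)^2}=\int_0^1\Bigl(\frac{\sqrt3}{1+\sqrt3x}-\frac{\sqrt3-2}{1+(\sqrt3-2)x}\Bigr)dx=\log(1+\sqrt3)-\log(\sqrt3-1).
\]
This equals $\log\frac{1+\sqrt3}{\sqrt3-1}=\log(2+\sqrt3)$, as required.

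\textbf{Step 3: minimality.} The projection $p(x,y)=x$ intertwines $\T$ with $T$, and $p_*\bar\mu=\mu$. Minimality follows once the $\sigma$-algebra $\bigvee_{n\ge0}\T^{n}p^{-1}\mathcal B$ generates the Borel sets of $\Omega$. Knowing $x$ and the first coordinates of $\T^{-1}(x,y),\dots,\T^{-n}(x,y)$ determines the first $n$ dual digits of $y$, since the first coordinate of the preimage selects the branch. That pins $y$ down to $\bar I_{(b_1,\eta_1)_{t_1}\cdots(b_n,\eta_n)_{t_n}}$. As noted in the proof of the preceding lemma, these intervals shrink to points, so the partition generates.

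I expect the main obstacle to be the sign and orientation bookkeeping in Step 2 for the odd branches and the $\eta=-1$ dual branches. The cleanest route is to write each branch as an explicit matrix and verify the invariance $\frac{dx\,du}{(x-u)^2}$ once for $\mathrm{PGL}(2,\mathbb R)$, rather than differentiating the branches case by case.
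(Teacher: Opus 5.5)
Your proposal is correct, but it proves invariance of $\bar\mu$ by a more direct route than the paper. The paper gets bijectivity by identifying $\overline{T}$ on irrational points with the two-sided shift on $\mathscr A^{\mathbb Z}$. It defers invariance to Lemma~\ref{le:Tbar-inv}, which relies on the geodesic coding of Section~\ref{sec:3}. There, the first return map $\Phi$ is conjugate to the double cover $\widetilde T$ (Theorem~\ref{th:com2}), and $\rho$ acts piecewise by elements of $\Theta$, so it preserves $d\alpha\,d\beta/(\alpha-\beta)^2$. Pushing forward by $J$ and forgetting the sign coordinate then gives invariance of $dx\,dy/(1+xy)^2$.

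You use the same underlying fact, diagonal M\"obius invariance of the measure on pairs of boundary points, but apply it directly in the coordinates $(x,-1/y)$. Each branch of $\overline{T}$ is the diagonal action of $M_{(a,\varepsilon)_s}^{-1}\in\mathrm{PGL}(2,\mathbb R)$; this can be checked for all five branch types, and no conjugate is needed for the odd ones. Allowing determinant $-1$ with absolute Jacobians removes both the double cover and the sign bookkeeping you flagged as the main obstacle.

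Your route makes the theorem self-contained and independent of Section~\ref{sec:3}. Your Step~3 also makes explicit the minimality that the paper leaves implicit in the shift identification: $\bigvee_{n\ge0}\overline{T}^{\,n}p^{-1}\mathcal B$ generates the Borel $\sigma$-algebra because every $\bar h_{(b,\eta)_t}$ contracts by a factor at most $1/3$ on $[\sqrt3-2,\sqrt3]$. The paper's route, in exchange, identifies $\bar\mu$ with the cross-section measure induced by Liouville measure, which it needs later anyway, e.g.\ in Lemma~\ref{le:mA_N}.
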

%\sh{The normalizing constant $\frac{1}{\log (2+ \sqrt{3}}$ is missing.}
\begin{proof}
Let $\Omega' = \Omega\cap (\mathbb Q^c\times\mathbb Q^c)$ be the set of irrational points of $\Omega$.
For $(x,y)\in\Omega'$, let
$$x = [0;(a_1,\varepsilon_1)_{s_1},(a_2,\varepsilon_2)_{s_2},\cdots]\quad
\text{ and } \quad
y = \langle (b_1,\eta_1)_{t_1},(b_{2},\eta_{2})_{t_{2}},(b_{3},\eta_{3})_{t_{3}},\cdots\rangle,$$ where $(a_n,\varepsilon_n)_{s_n}, (b_n,\eta_n)_{t_n}\in\mathscr A$.
By definition of $\T$, 
\begin{equation}\label{eq:Tbar}
\T(x,y) = \left([0;(a_2,\varepsilon_2)_{s_2},(a_3,\varepsilon_3)_{s_3},\cdots],\langle (a_{1},\varepsilon_{1})_{s_1},(b_{1},\eta_{1})_{t_{1}},(b_{2},\eta_{2})_{t_{2}},\cdots\rangle\right).
\end{equation}
Thus, the restriction $\T|_{\Omega'}$ is bijective since it is equivalent to the two-sided shift of $\mathscr A^{\mathbb Z}$.

We postpone the proof of $\T$-invariance of $\bar\mu$ to Lemma~\ref{le:Tbar-inv}.
\end{proof}
\begin{coro}\label{abscontinv}

The measure
\begin{align*}
d\mu(x) & = \frac{1}{\log(2+\sqrt{3})}\cdot \frac{1}{x}\left(\frac{1}{1-(2-\sqrt{3})x}-\frac{1}{1+\sqrt{3}x}\right)dx \\ & = \frac{2}{\log(2+\sqrt{3})}\cdot\frac{1}{(1-(2-\sqrt{3})x)(1+\sqrt{3}x)}dx
\end{align*}
is an absolutely continuous $T$-invariant probability measure.
\end{coro}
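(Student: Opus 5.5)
The corollary follows from the preceding theorem by projecting to the first coordinate. Since $\T(x,y)=(\CF(x),\cdot)$, the projection $p(x,y)=x$ satisfies $p\circ\T=\CF\circ p$. Hence $\mu:=p_*\bar\mu$ is $\CF$-invariant: for a Borel set $E\subset[0,1]$,
\[
\mu(\CF^{-1}E)=\bar\mu\bigl(p^{-1}\CF^{-1}E\bigr)=\bar\mu\bigl(\T^{-1}p^{-1}E\bigr)=\bar\mu(p^{-1}E)=\mu(E).
\]
This uses the $\T$-invariance of $\bar\mu$, which the paper postpones to Lemma~\ref{le:Tbar-inv} and which I take as given. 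Since $\bar\mu$ is a probability measure, so is $\mu$, and $\mu$ is absolutely continuous because $\bar\mu$ has a density on $\Omega$.

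It remains to compute the density by integrating out $y$ over $[\sqrt3-2,\sqrt3]$. For fixed $x\in(0,1]$, note $1+xy>0$ on this interval, since $xy\ge \sqrt3-2>-1$. An antiderivative in $y$ of $(1+xy)^{-2}$ is $-\frac{1}{x(1+xy)}$. Therefore
\[
\int_{\sqrt3-2}^{\sqrt3}\frac{dy}{(1+xy)^2}=\frac1x\left(\frac{1}{1-(2-\sqrt3)x}-\frac{1}{1+\sqrt3x}\right),
\]
which, after the factor $1/\log(2+\sqrt3)$, is the first expression in the statement.

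Combining the fractions gives the numerator $(1+\sqrt3x)-(1-(2-\sqrt3)x)=2x$. This cancels the $1/x$ and yields the second formula,
\[
\frac{2}{\log(2+\sqrt3)}\cdot\frac{1}{(1-(2-\sqrt3)x)(1+\sqrt3x)}.
\]
The apparent singularity at $x=0$ disappears, so the density is bounded and positive on $[0,1]$.

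As a consistency check, total mass one can be verified directly with a partial fraction decomposition:
\[
\int_0^1 \frac{2\,dx}{(1-(2-\sqrt3)x)(1+\sqrt3x)}=\bigl[\log(1+\sqrt3x)-\log(1-(2-\sqrt3)x)\bigr]_0^1=\log\frac{1+\sqrt3}{\sqrt3-1}=\log(2+\sqrt3).
\]
Here the coefficient $\frac{2}{(2-\sqrt3)+\sqrt3}=1$ makes the decomposition clean. The computation agrees with normalization inherited from $\bar\mu$.

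The only substantive input is the invariance of $\bar\mu$ under $\T$, i.e.\ Lemma~\ref{le:Tbar-inv}. Everything else here is routine integration.
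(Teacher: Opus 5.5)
Your proposal is correct and takes the paper's approach: $\mu$ is the $x$-marginal of the $\overline{T}$-invariant measure $\bar\mu$, and the density comes from the same integral $\int_{\sqrt3-2}^{\sqrt3}(1+xy)^{-2}\,dy$, with $\overline{T}$-invariance of $\bar\mu$ taken from Lemma~\ref{le:Tbar-inv}. You also spell out the factor-map argument ($p\circ\overline{T}=T\circ p$) and the normalization, which the paper leaves implicit; the paper computes the normalization in the proof of Lemma~\ref{le:Tbar-inv}.
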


\begin{proof}
Since the identity
$$\int_{\sqrt{3}-2}^{\sqrt{3}}\frac{dy}{(1+xy)^2}=\frac{1}{x}\left(\frac{1}{1+(\sqrt{3}-2)x}-\frac{1}{1+\sqrt{3}x}\right)$$
holds, the measure $\mu$ is the marginal measure of $\bar\mu$  obtained by integrating along the $y$-variable.
\end{proof}

We recall that the inverse branches of the dual continued fraction map $\F$ are $\bar{h}_{(b,\eta)_t}$ defined in \eqref{eq:inv.barT}.
Hence, $\F:\mathbb I\to \mathbb I$ is given by
$$
\F(x) = \begin{cases}
-\frac{1}{x}-2k & \text{if }x\in \left[\frac{-1}{2k-2+\sqrt{3}},\frac{-1}{2k+\sqrt{3}}\right),~k\ge 2, \vspace{1ex}\\
\frac{1}{x}-2k & \text{if }x\in \left(\frac{1}{2k+\sqrt{3}},\frac{1}{2k-2+\sqrt{3}}\right],~k\ge 1,\vspace{1ex} \\
\frac{k-(k+1)x}{kx-(k-1)},&\text{if }x\in\left[\frac{2k-3+\sqrt{3}}{2k-1+\sqrt{3}},\frac{2k-1+\sqrt{3}}{2k+1+\sqrt{3}}\right),~k\ge 2,\vspace{1ex} \\
\frac{kx-(k+1)}{k-(k-1)x},&\text{if }x\in\left(\frac{2k+1+\sqrt{3}}{2k-1+\sqrt{3}},\frac{2k-1+\sqrt{3}}{2k-3+\sqrt{3}}\right],~k\ge2
\end{cases}
$$
with $\F(0)=0$ and $\F(1)=1$.
See Figure~\ref{fi:F} for the graph of $\F$.

   \begin{defn}\label{def:DCF}
The dual SCF is given by the sequence as in \eqref{eq:DSCF}.
More precisely, we define $B: \mathbb I \to \mathscr{A}$ by
\[
    B(x) := 
        (b, \eta)_{t} \;\; \quad \text{if }x\in \bar{I}_{(b,\eta)_t}.\\
   \]  
  Letting $(b_{n}, \eta_{n})_{t_{n}}
:= B(\F^{n-1}x) \in \mathscr{A},$ for $x\in \mathbb I$, a \emph{dual spliced continued fraction expansion} is given by 
$$x=\langle(b_{1},\eta_{1})_{t_{1}}, (b_{2},\eta_{2})_{t_{2}}, \dots, (b_{n},\eta_{n})_{t_{n}}, \dots \rangle.$$ 
Rational numbers have finite length expansion $\langle(b_{1},\eta_{1})_{t_{1}}, (b_{2},\eta_{2})_{t_{2}}, \dots, (b_{n},\eta_{n})_{t_{n}}\rangle$ since $\F^{n}x=0$ or $1$ for some $n$.
If $x\not\in\mathbb Q$, then it has an infinite length expansion.
   \end{defn}

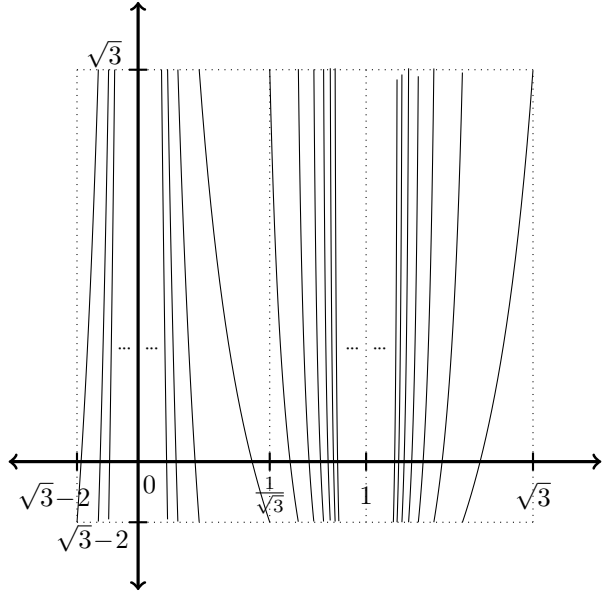
\begin{figure}
\begin{tikzpicture}[scale=3]
\draw[<->,very thick] (1.732-2-0.3,0)--(1.732+0.3,0);
\draw[<->,very thick] (0,1.732-2-0.3)--(0,1.732+0.3);
\draw[dotted] (1.732-2,1.732-2)--(1.732-2,1.732);
\draw[dotted] (1.732-2,1.732-2)--(1.732,1.732-2);
\draw[dotted] (1.732,1.732-2)--(1.732,1.732);
\draw[dotted] (1.732-2,1.732)--(1.732,1.732);
\draw[dotted] (1/1.732,1.732)--(1/1.732,1.732-2);
\draw[dotted] (1,1.732)--(1,1.732-2);

\draw[thick] (1/1.732,-0.04)--(1/1.732,0.04);
\draw[thick] (1,-0.04)--(1,0.04);
\draw[thick] (1.732,-0.04)--(1.732,0.04);
\draw[thick] (1.732-2,-0.04)--(1.732-2,0.04);
\draw[thick] (-0.04,1.732)--(0.04,1.732);
\draw[thick] (-0.04,1.732-2)--(0.04,1.732-2);

\node at (1.732,0-0.15) {\small$\sqrt{3}$};
\node at (1.732-2-0.1,0-0.15) {\small$\sqrt{3}\!-\!2$};
\node at (0+0.05,0-0.1) {\small$0$};
\node at (1/1.732,0-0.15) {\small$\frac{1}{\sqrt{3}}$};
\node at (1,0-0.15) {\small$1$};
\node at (0-0.15,1.732+0.07) {\small$\sqrt{3}$};
\node at (0-0.2,1.732-2-0.07) {\small$\sqrt{3}\!-\!2$};

\node at (-0.04,0.5) {\tiny$\cdot$};
\node at (-0.06,0.5) {\tiny$\cdot$};
\node at (-0.08,0.5) {\tiny$\cdot$};
\node at (0.04,0.5) {\tiny$\cdot$};
\node at (0.06,0.5) {\tiny$\cdot$};
\node at (0.08,0.5) {\tiny$\cdot$};
\node at (1-0.04,0.5) {\tiny$\cdot$};
\node at (1-0.06,0.5) {\tiny$\cdot$};
\node at (1-0.08,0.5) {\tiny$\cdot$};
\node at (1+0.04,0.5) {\tiny$\cdot$};
\node at (1+0.06,0.5) {\tiny$\cdot$};
\node at (1+0.08,0.5) {\tiny$\cdot$};

\draw[domain=-1/5.732:-1/3.732, black] plot (\x, {-1/\x-4});
\draw[domain=-1/7.732:-1/5.732, black] plot (\x, {-1/\x-6});
\draw[domain=-1/9.732:-1/7.732, black] plot (\x, {-1/\x-8});

\draw[domain=1/3.732:1/1.732, black] plot (\x, {1/\x-2});
\draw[domain=1/5.732:1/3.732, black] plot (\x, {1/\x-4});
\draw[domain=1/7.732:1/5.732, black] plot (\x, {1/\x-6});
\draw[domain=1/9.732:1/7.732, black] plot (\x, {1/\x-8});

\draw[domain=1/1.732:6/11+1.732/11, black] plot (\x, {(2-3*\x)/(2*\x-1)});
\draw[domain=6/11+1.732/11:32/46+2*1.732/46, black] plot (\x, {(3-4*\x)/(3*\x-2)});
\draw[domain=32/46+2*1.732/46:9*7/78-3/78+2*1.732/78, black] plot (\x, {(4-5*\x)/(4*\x-3)});
\draw[domain=9*7/78-3/78+2*1.732/78:48/59+1.732/59, black] plot (\x, {(5-6*\x)/(5*\x-4)});
\draw[domain=48/59+1.732/59:70/83+1.732/83, black] plot (\x, {(6-7*\x)/(6*\x-5)});
\draw[domain=70/83+1.732/83:96/111+1.732/111, black] plot (\x, {(7-8*\x)/(7*\x-6)});

\draw[domain=2-1.732/3:1.732, black] plot (\x, {(2*\x-3)/(2-\x)});
\draw[domain=16/11-1.732/11:2-1.732/3, black] plot (\x, {(3*\x-4)/(3-2*\x)});
\draw[domain=30/23-1.732/23:16/11-1.731/11, black] plot (\x, {(4*\x-5)/(4-3*\x)});
\draw[domain=96/78-1.732/39:30/23-1.732/23, black] plot (\x, {(5*\x-6)/(5-4*\x)});
\draw[domain=13*11/118-3/118-2*1.732/118:96/78-1.731/39, black] plot (\x, {(6*\x-7)/(6-5*\x)});
\draw[domain=15*13/166-3/166-2*1.732/166:13*11/118-3/118-2*1.732/118, black] plot (\x, {(7*\x-8)/(7-6*\x)});
\draw[domain=126/111-1.732/111:15*13/166-3/166-2*1.732/166, black] plot (\x, {(8*\x-9)/(8-7*\x)});

\end{tikzpicture}
\caption{The graph of the dual SCF map $\widehat T$}
\label{fi:F}
\end{figure}

The dual SCF expansion is obtained by iterations of the inverse branches $\bar{h}_{(b,\eta)_t}$ of $\F$ given in \eqref{eq:inv.barT}.
For example, we compute
$$\frac{-9+4\sqrt{3}}{11}=\dfrac{\bf -1}{2\cdot{\bf 2}+\dfrac{1}{1+\dfrac{\bf -1}{{\bf 3}+\dfrac{1}{1+\dfrac{\bf -1}{2\cdot{\bf 2}+\dfrac{1}{1+\dfrac{\bf -1}{{\bf3}+\dfrac{1}{1+\ddots}}}}}}}}.$$
Indeed, $\frac{-9+4\sqrt{3}}{11}$ satisfies the equation
$x = \frac{-1}{4+\frac{1}{1+\frac{-1}{3+\frac{1}{1+x}}}}.$

\section{Coding of geodesic flow on $T^1\M$}\label{sec:3}

Throughout the article, by a geodesic, we mean an oriented geodesic. For a geodesic $\gamma$ in $\bH$, we denote its forward and backward endpoints on the boundary $\partial \bH \simeq \bR \cup \{\infty\}$ by $\gamma_\infty$ and $\gamma_{-\infty}$, respectively.
As mentioned in the introduction, we consider the tessellation $\mathcal T$ given by the $\Theta$-translates of the ideal quadrilateral $\mathcal Q$ (see Figure~\ref{fi:tes}).
We remark that $\mathcal Q$ is the union of two copies of the fundamental domain $\mathcal F$ of the group $\Theta.$
\begin{figure}
\begin{tikzpicture}[scale=1.2]
\fill[gray!30] (-1,0) arc (180:0:.5) arc (180:0:.5) -- (1,4) -- (-1,4) -- (-1,0);
    
\draw (-5,0) arc (180:0:.5);
\draw (-4,0) arc (180:0:.5);
\draw (-3,0) arc (180:0:.5);
\draw (-2,0) arc (180:0:.5);
\draw (-1,0) arc (180:0:.5);
\draw (0,0) arc (180:0:.5);
\draw (1,0) arc (180:0:.5);
\draw (2,0) arc (180:0:.5);
\draw (3,0) arc (180:0:.5);
\draw (4,0) arc (180:0:.5);

\draw (0,0) arc (180:0:1/6);
\draw (1/3,0) arc (180:0:1/12);
\draw (1/2,0) arc (180:0:1/4);
\draw (0,0) arc (0:180:1/6);
\draw (-1/3,0) arc (0:180:1/12);
\draw (-1/2,0) arc (0:180:1/4);

\draw (2,0) arc (180:0:1/6);
\draw (2+1/3,0) arc (180:0:1/12);
\draw (2+1/2,0) arc (180:0:1/4);
\draw (2,0) arc (0:180:1/6);
\draw (2-1/3,0) arc (0:180:1/12);
\draw (2-1/2,0) arc (0:180:1/4);

\draw (4,0) arc (180:0:1/6);
\draw (4+1/3,0) arc (180:0:1/12);
\draw (4+1/2,0) arc (180:0:1/4);
\draw (4,0) arc (0:180:1/6);
\draw (4-1/3,0) arc (0:180:1/12);
\draw (4-1/2,0) arc (0:180:1/4);

\draw (-2,0) arc (180:0:1/6);
\draw (-2+1/3,0) arc (180:0:1/12);
\draw (-2+1/2,0) arc (180:0:1/4);
\draw (-2,0) arc (0:180:1/6);
\draw (-2-1/3,0) arc (0:180:1/12);
\draw (-2-1/2,0) arc (0:180:1/4);

\draw (-4,0) arc (180:0:1/6);
\draw (-4+1/3,0) arc (180:0:1/12);
\draw (-4+1/2,0) arc (180:0:1/4);
\draw (-4,0) arc (0:180:1/6);
\draw (-4-1/3,0) arc (0:180:1/12);
\draw (-4-1/2,0) arc (0:180:1/4);

\draw (-5,0) -- (-5,4);
\draw (-3,0) -- (-3,4);
\draw[very thick] (-1,0) -- (-1,4);
\draw[very thick] (1,0) -- (1,4);
\draw (3,0) -- (3,4);
\draw (5,0) -- (5,4);

\draw (-5.5,0) -- (5.5,0); 
\node at (-5,-.3)  {$-5$};	
\node at (-4,-.3)  {$-4$};	
\node at (-3,-.3)  {$-3$};	
\node at (-2,-.3)  {$-2$};	
\node at (-1,-.3)  {$-1$};	
\node at (0,-.3)  {$0$};	
\node at (1,-.3)  {$1$};	
\node at (2,-.3)  {$2$};	
\node at (3,-.3)  {$3$};	
\node at (4,-.3)  {$4$};	
\node at (5,-.3)  {$5$};

\node at (1/3,-.3) {$\frac13$};
\node at (1/2,-.3) {$\frac12$};

\node at (0,2) {$\mathcal Q$};
	
 \end{tikzpicture}
\caption{Tessellation $\mathcal T$ with the quadrilateral $\mathcal Q$ whose vertices are $-1,0,1,\infty$.}
\label{fi:tes}
\end{figure}
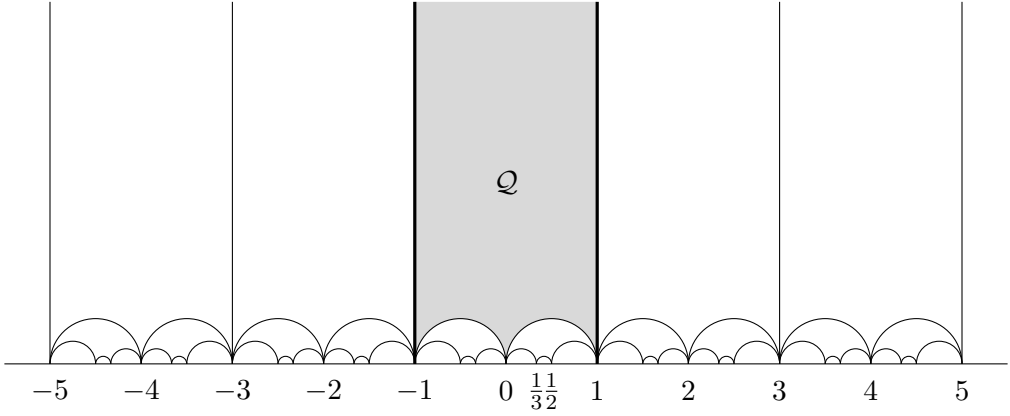
\subsection{Coding of the geodesic flow}\label{sec:cod} 
In this subsection, we construct the first return map $\Phi$ of the geodesic flow $g_t$ on $T^1\mathcal M$ with respect to a suitable cross section $\pi(X)$ and show that it is equivalent to a double cover of the natural extension.

Recall that we denote the projection map by $\pi:\bH\to \M.$ By abuse of notation, let us denote the projection map $T^1 \bH \to T^1 \M$ by $\pi$ as well. Let us first define the cross section $\pi(X).$
We first define $X^{\pm}$, which consists of \emph{some} unit tangent vectors based on the vertical sides of the fundamental domain, pointing outward the domain.
\begin{defn}[Cross section]
For $v\in T^1\bH$, we denote by $\gamma^v$ the geodesic in $\bH$ determined by $v$, and denote by $p(v)$ its base point.
Let $X=X_+\cup X_-$ with
$$
X_\pm = \left\{ v \in T^1\mathbb{H}^2 \;\middle|\; 
\mathrm{Re}(p(v)) = \pm 1 \;\text{ and }\; (\gamma^v_\infty, \gamma^v_{-\infty}) \in \mathcal{I}_\pm \right\},
$$
where $\mathcal I = \mathcal I_+\cup \mathcal I_-$ with 
\begin{equation}\label{eq:I}
\mathcal I_+ = (1,\infty)\times [-\sqrt{3},-\sqrt{3}+2),\quad\text{ and }\quad\mathcal I_- = (-\infty,-1)\times (\sqrt{3}-2,\sqrt{3}].
\end{equation}
We define the cross section to be $\pi(X) \subset T^1\M$.
\end{defn}

% Let 
% $$\mathcal I_\pm^* = \mathcal I_\pm \setminus \{(\alpha,\beta):|1/\alpha|\text{ is an endpoint of }I_{k,\veps}^s\text{ for some }(k,\veps)_s\in\mathscr{A}\}.$$

\begin{lem}\label{le:lift}
    Any geodesic $\gamma_{0}$ on $\M$ has a lift $\gamma$ on $\bH$ such that $(\gamma_{\infty}, \gamma_{-\infty}) \in \mathcal I$.
\end{lem}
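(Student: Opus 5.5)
The plan is to realise the natural extension $\T$ of Theorem~2 through the action of $\Theta$ on pairs of endpoints, and then to use the contraction of the dual inverse branches $\bar h_{(b,\eta)_t}$ to push the backward endpoint into the dual domain $\mathbb I=[\sqrt3-2,\sqrt3]$ of Lemma~2.2. I use the coordinate change
\[
(\alpha,\beta)\in\mathcal I_+\ \longmapsto\ (x,y)=\Bigl(\tfrac1\alpha,\,-\beta\Bigr),\qquad (\alpha,\beta)\in\mathcal I_-\ \longmapsto\ (x,y)=\Bigl(-\tfrac1\alpha,\,\beta\Bigr).
\]
Under this map $\mathcal I_\pm$ goes onto $(0,1)\times\mathbb I$ (up to boundary conventions). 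It also turns $d\alpha\,d\beta/(\alpha-\beta)^2$ into $dx\,dy/(1+xy)^2$, since $\alpha-\beta=(1+xy)/x$ on $\mathcal I_+$, and similarly on $\mathcal I_-$. This confirms that it is the right dictionary, and it suffices to find a lift whose endpoint pair corresponds to a point of $(0,1)\times\mathbb I$.

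\emph{Step 1 (normalising the forward endpoint).} Take any lift $\gamma$ of $\gamma_0$ with endpoints $(\alpha,\beta)$. If $\alpha=\infty$, apply $\sigma$. Then apply a power of $\tau$ to put $\alpha\in[-1,1)$. If $|\alpha|<1$, apply $\sigma$, which gives $|\alpha|>1$. This leaves the backward endpoint $\beta$ arbitrary, apart from $\beta\neq\alpha$.

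\emph{Step 2 (branches of $T$ as elements of $\Theta$).} For each letter $(a,\varepsilon)_s\in\mathscr A$, I will check from \eqref{eqn:2.1} and \eqref{eq:inv.barT} that there is $g\in\Theta$, possibly composed with the orientation reversing reflection $z\mapsto-\bar z$, with the following property. In the coordinates above, $g$ sends $(x,y)\mapsto(T(x),\bar h_{(a,\varepsilon)_s}(y))$ whenever $x\in I_{(a,\varepsilon)_s}$. The reflection is what interchanges $\mathcal I_+$ and $\mathcal I_-$. For the even branches one takes $g=\tau^{-a}\sigma$ up to sign. For the odd branches one takes a conjugate of $\tau^{\pm(a-1)}$ by an element of $\Theta$ fixing the cusp $1$.

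Since the statement concerns unoriented images in $\M$, the reflection is harmless provided I track that $\mathcal I_\pm$ are exchanged by it. More precisely, the sign $\pm$ of the side is encoded by whether an even or odd number of reflections has been used. I must verify that this bookkeeping matches the two sides $\mathrm{Re}=\pm1$, so that the resulting lift, which must be an honest $\Theta$-translate, lands in $\mathcal I$. If the reflection cannot be avoided, I would instead compose with $\sigma\tau^{\pm1}$ to get a genuine element of $\Theta$ swapping the two pieces.

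\emph{Step 3 (pushing $y$ into $\mathbb I$).} Starting from $(x,y)$ produced in Step 1, apply the group elements of Step 2 along the SCF digits of $x$. After $n$ steps the pair becomes
\[
\bigl(T^n x,\ \bar h_{(a_n,\varepsilon_n)_{s_n}}\circ\cdots\circ\bar h_{(a_1,\varepsilon_1)_{s_1}}(y)\bigr).
\]
The claim is that for some finite $n$ the second coordinate lies in $\mathbb I$. Lemma~2.2 shows that each $\bar h$ preserves $\mathbb I$, so once the orbit enters $\mathbb I$ it stays there.

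I expect Step 3 to be the main obstacle, because $y$ is an arbitrary real (or $\infty$) and the maps $\bar h$ have poles. The key point is that the pair stays non-degenerate, i.e.\ $1+xy\ne0$, because Möbius maps preserve $\alpha\ne\beta$. Hence $y$ never hits the pole of the next branch.

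The contraction argument I would try first runs as follows. The maps $\bar h_{(a,\varepsilon)_s}$ are restrictions of Möbius maps whose repelling fixed points lie near $-1/x$-type positions outside $\mathbb I$. Their compositions along the digit sequence of $x$ converge, uniformly on compact sets of $\widehat{\mathbb R}$ avoiding the point $-1/x$-orbit, to the single limit point of $\mathbb I$ determined by the digits. Since that limit lies in the interior of $\mathbb I$ for irrational $x$, and $y$ stays away from the pole, the images enter $\mathbb I$ after finitely many steps.

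If this direct contraction argument is awkward, I would use a geometric fallback. The geodesic crosses infinitely many quadrilaterals of $\mathcal T$, and one can pick a crossing of a vertical side $\mathrm{Re}=\pm1$ of a translate of $\mathcal Q$ where the backward endpoint lies in the prescribed window. The endpoints of $\mathbb I$, namely $\sqrt3-2$ and $\sqrt3$, are the fixed points of $\bar h_{(2,-1)_e}$ and $\bar h_{(2,-1)_o}$. These two fixed points mark the extreme positions of a geodesic first entering $\mathcal Q$ through a side, which is what makes this fallback plausible.

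\emph{Step 4 (degenerate cases).} When $x$ is rational, the geodesic ends at a cusp and the SCF expansion of $x$ is finite. In that case I run the same procedure only up to the last digit, or apply Step 3 in reverse time using the dual map $\F$ on $\beta$, with the roles of forward and backward endpoints exchanged. Finally, I would check the half-open boundary conventions of \eqref{eq:I} against the partitions $I_{(k,\varepsilon)_s}$ and $\bar I_{(b,\eta)_t}$.
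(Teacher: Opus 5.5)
Your Step~3 fails, and the proof fails with it. In $\bar h_{(a_n,\varepsilon_n)_{s_n}}\circ\cdots\circ\bar h_{(a_1,\varepsilon_1)_{s_1}}(y)$ the newest branch is applied outermost. So these compositions do not converge to a limit point of $\mathbb I$ determined by the digits of $x$.

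What is true is weaker. When $1+xy\neq 0$, the image of $y$ lies within $O(Q_n^{-2})$ of the image of $0$, which is $\langle (a_n,\varepsilon_n)_{s_n},\dots,(a_1,\varepsilon_1)_{s_1}\rangle\in\mathbb I$. This reference point wanders and can accumulate on $\partial\mathbb I$, and being close to $\mathbb I$ is not being in it.

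There are genuine counterexamples. Take $x=2-\sqrt3=[0;\overline{(2,-1)_e}]$ and $y\in(-2-\sqrt3,\sqrt3-2)$. The map $\bar h_{(2,-1)_e}(y)=-1/(4+y)$ is increasing, with repelling fixed point $-2-\sqrt3$ and attracting fixed point $\sqrt3-2$. So the iterates increase to $\sqrt3-2$ and never enter $\mathbb I$. The same happens for $x=1/\sqrt3$ and $y>\sqrt3$, using $\bar h_{(2,-1)_o}(y)=(2y+3)/(y+2)$.

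Geometrically, take the lift $\gamma$ with $\gamma_\infty=2+\sqrt3$ and $\gamma_{-\infty}=2-\sqrt3+\delta$, with $\delta>0$ small. Your Steps 1--3 apply $\sigma\tau^{-2}$ forever. Yet $\tau^{-1}\gamma$, with endpoints $(\sqrt3,-\sqrt3+\delta)$, already lies in $\mathcal I_+$.

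The underlying reason is that your Step~2 elements are those defining $\rho$. They jump from $\mathrm{Re}\,z=\pm1$ straight to the arc $\mathcal C_{(k,\varepsilon)_s}$ and skip the intermediate edges of $\mathcal T$ (here $\mathrm{Re}\,z=3$). The proof of Theorem~\ref{th:exc} shows that no crossing of the section is skipped only for lifts already in $A$, which is exactly what you are trying to prove. Since the lemma concerns every geodesic, these orbits cannot be discarded as a null set. Your geometric fallback simply restates the lemma.

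A minor point on Step~2: geodesics here are oriented and $z\mapsto-\bar z\notin\Theta$, so that reflection would change the geodesic on $\M$. Equation \eqref{eq:rho.def} shows that genuine elements of $\Theta$ suffice. The even-branch element is $\sigma\tau^{-k}$, not $\tau^{-k}\sigma$.

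The idea behind the paper's proof is that the window $[-\sqrt3,2-\sqrt3)$ for the backward endpoint is a fundamental interval for both cusp stabilizers at once. It works for $\Theta_\infty=\langle\tau\rangle$ acting on $\mathbb R$. It also works for $\Theta_1=\langle\tau\sigma\rangle$, with $\tau\sigma(z)=2-1/z$, acting on $\widehat{\mathbb R}\setminus\{1\}$, because $\tau\sigma(2-\sqrt3)=-\sqrt3$.

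The paper takes a lift through $\mathcal F$ with $\gamma_{-\infty}<\gamma_\infty$ and splits into cases according to which two sides of $\mathcal F$ it crosses. Powers of $\tau$ settle every configuration except $\gamma_\infty\in(1,3]$, $\gamma_{-\infty}\in[2-\sqrt3,1)$. In that case one iterates $\tau\sigma$, which keeps $\gamma_\infty$ in $(1,5/3]$ and strictly decreases $\gamma_{-\infty}$. Since $1$ is the only fixed point, the orbit must leave $[2-\sqrt3,1)$, and its first exit lands in $[-\sqrt3,2-\sqrt3)$.

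This argument is elementary. It needs no natural extension or contraction estimate, and it has no exceptional digit sequences.
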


\begin{proof}
    Fix a lift $\gamma$ of $\gamma_{0}$. We may assume that $\gamma_{-\infty}<\gamma_{\infty}$.
    The ideal triangle $\mathcal F$ with vertices $-1$, $1$, and $\infty$ is a fundamental domain of $\Theta$.
    For a geodesic $\gamma$ that passes through $\mathcal F$, there are three possible cases: 
    \begin{enumerate}
        \item $\gamma_{-\infty}<-1<1<\gamma_{\infty}$: $\gamma$ intersects the vertical lines $x=-1$ and $x=1$;
        \item $-1<\gamma_{-\infty}<1<\gamma_{\infty}$: $\gamma$ intersects the semicircle with endpoints $-1$ and $1$, and the vertical line $x = 1$;
        \item $\gamma_{-\infty}<-1<\gamma_{\infty}<1$: $\gamma$ intersects the vertical line $x=-1$ and the semicircle with endpoints $-1$ and $1$.
    \end{enumerate}
    
   (1) In the first case, after translation by $\tau^{n}: z \mapsto z+2n$ for some $n \in \mathbb Z_{+}$, we have $\gamma_{-\infty} \in (-\sqrt{3},-\sqrt{3}+2)$ and $\gamma_{\infty}>1$. 
    
    (2) For the second case, if $\gamma_{-\infty}< 2-\sqrt{3}$, then $(\gamma_{\infty}, \gamma_{-\infty}) \in \mathcal I_+$. 
   When $2-\sqrt{3}<\gamma_{-\infty}<1$, if $\gamma_{\infty}>3$, then translating by $\tau^{-1}$ yields $(\gamma_{\infty}, \gamma_{-\infty}) \in \mathcal I_+$. 

 If $\gamma_{\infty} \in (1, 3]$, we repeatedly apply the map $g(z) = \tau\sigma(z) = 2 - 1/z$, which is a contraction on 
$(1,3]$.
        For the forward endpoint $z=\gamma_\infty \in (1, 3]$, we have $g(z) \in (1, 5/3] \subset (1, \infty)$, so it remains valid under iteration.
        For the backward endpoint $y=\gamma_{-\infty} \in [2-\sqrt{3}, 1)$, observe that $g$ is strictly increasing ($g'(y) = 1/y^2 > 0$) and maps the interval $[2-\sqrt{3}, 1)$ onto $[-\sqrt{3}, 1)$. Furthermore, $g(y) < y$ for all $y \in (0, 1)$ (as $(y-1)^2 > 0$).
        The orbit defined by $y_{k+1} = g(y_k)$ is strictly decreasing. Since the fixed point $1$ does not belong to the interval, the orbit must eventually leave $[2-\sqrt{3}, 1)$ to the left. Let $k$ be the smallest integer such that $y_k < 2-\sqrt{3}$. Since the map sends $[2-\sqrt{3}, 1)$ onto $[-\sqrt{3}, 1)$, and $y_{k-1} \ge 2-\sqrt{3}$, we must have $y_k \ge g(2-\sqrt{3}) = -\sqrt{3}$.
        Therefore, $y_k \in [-\sqrt{3}, 2-\sqrt{3})$, and the lift $g^k \gamma$ is in $\mathcal{I}_+$.
        
    (3) The last case can be reduced to Case 1 or Case 2 by a translation: the new endpoints are $\tau\gamma_{\infty} = \gamma_{\infty}+2 \in (1, 3)$ and $\tau\gamma_{-\infty} = \gamma_{-\infty}+2 < 1$. This reduces the configuration to Case 2 (if $\tau\gamma_{-\infty} > -1$) or Case 1 (if $\tau\gamma_{-\infty} < -1$).
\end{proof}

Let $\Phi$ be the first return map to $\pi(X)$ of the geodesic flow on $T^1\M$.
The map $\Phi$ is well-defined everywhere except for the vectors $u \in \pi(X)$ that escape to a cusp without intersecting the cross section $\pi(X)$ again, which are of Lebesgue measure 0. 
For an interval $I\subset \mathbb R$, let $I^{-1}$ denote $\{1/x:x\in I\}$.
Let $E =\left\{ \frac{k}{k-1},\ \frac{2k+1}{2k-1},\ k : k \ge 2 \right\}$ be the set of endpoints of $(I_{(k,\veps)_s})^{-1}$, where $I_{(k,\veps)_s}$ are domains of branches of $T$ as in \eqref{eqn:2.1}.
Let
$$\mathcal I^*= \{(\gamma_\infty, \gamma_{-\infty})\in\mathcal I: |\gamma_\infty| \notin E\} \quad\text{ and }\quad X^*= \{v\in X: |\gamma_\infty^{v}|\not\in E\}.$$

We shall see in Theorem~\ref{th:exc} that $\Phi:\pi(X^*)\to \pi(X)$ is well-defined and equivalent to a double cover of the natural extension $\widetilde{T}$.
For this purpose, we parametrize $\pi(X)$ by $\mathcal I$, and give an explicit description of $\Phi$.

Let us introduce the set of geodesics whose pairs of endpoints are contained in $\mathcal I$ 
and denote it by $A=A_+\cup A_-$, as follows:
$$A_\pm=\{\text{geodesics }\gamma\text{ on }\bH\text{ such that }(\gamma_\infty,\gamma_{-\infty})\in\mathcal I_\pm\}. $$
The following lemma formalizes the natural bijections between the four sets $\pi(X)$, $X$, $A$, and $\mathcal I$.

\begin{prop}\label{pr:bij}
(1) The restriction $\pi|_X: X\to \pi(X)$ is bijective.\\
(2) The canonical map $\varphi_1$ from $X$ to $A$, given by $v\mapsto \gamma^{v}$, is bijective.\\
(3) The canonical map $\varphi_2$ from $A$ to $\mathcal I$, given by $\gamma \mapsto (\gamma_{\infty},\gamma_{-\infty})$, is bijective.
\end{prop}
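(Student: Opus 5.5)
I will prove the three parts in reverse order of difficulty: first (3) and (2), which are elementary facts about geodesics in $\bH$, and then (1), which carries the real content.

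\emph{Part (3).} This is immediate. An oriented geodesic in $\bH$ is uniquely determined by its ordered pair of distinct endpoints in $\partial\bH$. Every pair $(\alpha,\beta)\in\mathcal I$ has $\alpha\neq\beta$, since in $\mathcal I_+$ we have $\alpha>1>\beta$, and in $\mathcal I_-$ we have $\alpha<-1<\beta$. So $\varphi_2$ is injective, and it is surjective onto $\mathcal I$ by the definition of $A$.

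\emph{Part (2).} The map $\varphi_1$ lands in $A$ by the definition of $X$. For the inverse, take $\gamma\in A_+$, so $\gamma_\infty>1>\gamma_{-\infty}$. Then $\gamma$ crosses the vertical line $\mathrm{Re}(z)=1$ exactly once, because a geodesic meets another geodesic at most once and its endpoints separate $1$ from $\infty$. The unit tangent vector to $\gamma$ at this crossing point is the unique element of $X_+$ whose geodesic is $\gamma$. The case $A_-$ is symmetric, using the line $\mathrm{Re}(z)=-1$. Finally, $\gamma$ cannot also meet $\mathrm{Re}(z)=-1$ as an element of $X_-$, since the sign of $\gamma_\infty-\gamma_{-\infty}$ distinguishes $\mathcal I_+$ from $\mathcal I_-$. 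Hence $\varphi_1$ is a bijection.

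\emph{Part (1), surjectivity.} Let $w\in\pi(X)$; by definition $w=\pi(v)$ for some $v\in X$, so there is nothing to prove. The substance of Lemma~\ref{le:lift} is that every geodesic on $\M$ meets $\pi(X)$, but that is not needed for this statement.

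\emph{Part (1), injectivity.} This is the main obstacle. Suppose $v,v'\in X$ and $v'=g v$ for some $g\in\Theta$. Then $g$ maps $(\gamma^v_\infty,\gamma^v_{-\infty})\in\mathcal I$ to $(\gamma^{v'}_\infty,\gamma^{v'}_{-\infty})\in\mathcal I$, and it maps the base point $p(v)$, which lies on $\mathrm{Re}=\pm1$, to $p(v')$, which also lies on $\mathrm{Re}=\pm1$. The vertical sides $\mathrm{Re}=\pm1$ are sides of the tessellation $\mathcal T$ of $\bH$ by $\Theta$-translates of $\mathcal F$. So $g$ maps the vertical line through $p(v)$ to the vertical line through $p(v')$. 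Each side of the tessellation has a trivial or order-two stabilizer in $\Theta$. Since the lines $\mathrm{Re}(z)=\pm1$ are $\Theta$-equivalent only via $\tau^{\pm1}$ and their stabilizers are trivial (neither contains an elliptic fixed point), $g$ is one of $\mathrm{id}$, $\tau$, $\tau^{-1}$.

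Now I split into cases.
\begin{itemize}
\item If $g=\mathrm{id}$, then $v=v'$ and we are done.
\item If $g=\tau$, then $v\in X_-$ and $v'\in X_+$, since $\tau$ sends $\mathrm{Re}=-1$ to $\mathrm{Re}=1$. But $v\in X_-$ means $\gamma^v_\infty<-1$, so $\gamma^{v'}_\infty=\gamma^v_\infty+2<1$. This contradicts $\gamma^{v'}_\infty>1$, which is required for $v'\in X_+$.
\item If $g=\tau^{-1}$, the argument is symmetric.
\end{itemize}
The half-open choices of the intervals in \eqref{eq:I} ensure that boundary pairs are not double-counted. This gives injectivity, and I expect the only delicate point to be the careful justification that $g$ must be a power of $\tau$.
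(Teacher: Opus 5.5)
Your proof is correct and is essentially the paper's argument: parts (2) and (3) match exactly, and your reduction to $g\in\{\mathrm{id},\tau^{\pm1}\}$ followed by the sign check on $\gamma_\infty$ is a more explicit version of the paper's argument for (1), namely that $\pi$ is injective on each vertical line and that vectors based on $\mathrm{Re}(z)=1$ and $\mathrm{Re}(z)=-1$ point to opposite sides of their common image in $\M$. Two points need tightening: first, triviality of the stabilizer of $\ell(1,\infty)$ is better justified by noting that a stabilizing element would either fix $\infty$ (so be a power of $\tau$, which moves $1$) or swap the $\Theta$-inequivalent cusps $1$ and $\infty$; second, the half-open intervals in $\mathcal I$ play no role in this proposition, so that closing remark can be dropped.
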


\begin{proof}
(1) Let $\ell_{\pm1}=\{v\in X: \mathrm{Re}(p(v)) = \pm1\}$.
 Let $u_1,u_2\in X$. 
The projection $\pi$ is injective on each of $\ell_1$ and $\ell_{-1}$. 
If $\mathrm{Re}(p(u_1))\not=\mathrm{Re}(p(u_2))$, then the vectors $\pi(u_1)$ and $\pi(u_2)$ point in different directions with respect to the line $\pi(\ell_1) = \pi(\ell_{-1})$ on $\mathcal{M}$, and hence $\pi(u_1) \neq 
\pi(u_2)$.

(2) Each $\gamma\in A_+$ intersects $\ell_{+1}$ at exactly one point, and similarly for $A_-$ and $\ell_{-1}$.
Thus, $\varphi_1$ has a well-defined inverse and is therefore bijective.

(3) Each $(\alpha,\beta)\in \mathcal I$ determines a unique geodesic on $\bH$. Thus, $\varphi_2$ is bijective.
\end{proof}

Let $ A^*=\{\gamma\in A:|\gamma_\infty|\not\in E\}.$
Using the above proposition, we first define the map $\rho: A^* \to A$, which acts as a shift map on the pair consisting of the SCF of $\gamma_\infty$, and the dual SCF of $\gamma_{-\infty}$. 

\begin{defn} Denote by $j_\gamma$ the sign of $\gamma_\infty$.
For a geodesic $\gamma\in A$ with endpoints given by
\begin{equation*}\begin{cases}
\gamma_\infty=j_\gamma\cdot[(a_1,\veps_1)_{s_1};(a_2,\veps_2)_{s_2},(a_3,\veps_3)_{s_3},\dots],\\
\gamma_{-\infty}=-j_\gamma\cdot\langle(a_0,\veps_0)_{s_0},(a_{-1},\veps_{-1})_{s_{-1}},(a_{-2},\veps_{-2})_{s_{-2}},\dots\rangle,
\end{cases}\end{equation*}
we define the map $\rho$ by
\begin{equation}\label{eq:rho.shift}\begin{cases}
(\rho(\gamma))_\infty=-\veps_1 j_\gamma \cdot[(a_2,\veps_2)_{s_2};(a_3,\veps_3)_{s_3},(a_4,\veps_4)_{s_4},\dots],\\
(\rho(\gamma))_{-\infty}=\veps_1 j_\gamma \cdot\langle(a_1,\veps_1)_{s_1},(a_0,\veps_0)_{s_0},(a_{-1},\veps_{-1})_{s_{-1}},(a_{-2},\veps_{-2})_{s_{-2}},\dots\rangle.
\end{cases}\end{equation}
 
\end{defn}
The map $\rho$ is well-defined by the definitions of the SCF and the dual SCF.
In terms of the generators of $\Theta$, the map $\rho: A^* \to A$ is given by
\begin{equation} \label{eq:rho.def}
\rho(\gamma) = 
\begin{cases}
\sigma\tau^{-j_\gamma k}(\gamma), & \text{if } |\gamma_\infty| \in (2k, 2k+1), \quad k \ge 1, \\
\sigma\tau^{-j_\gamma k}(\gamma), & \text{if } |\gamma_\infty| \in (2k-1, 2k), \quad k \ge 2, \\
\tau^{-j_\gamma}(\sigma\tau^{-j_\gamma})^{k-1}(\gamma), & \text{if } |\gamma_\infty| \in \left(\frac{2k+1}{2k-1}, \frac{k}{k-1}\right), \quad k \ge 2, \\
(\sigma\tau^{-j_\gamma})^{k}(\gamma), & \text{if } |\gamma_\infty| \in \left(\frac{k+1}{k}, \frac{2k+1}{2k-1}\right), \quad k \ge 2,
\end{cases}
\end{equation}

The explicit formulas for the above linear fractional maps are as follows:
\begin{equation}\label{eq:rho.form}
\begin{matrix}
\sigma\tau^{-j_\gamma k}(z) = j_\gamma\cdot \dfrac{1}{2k-j_\gamma z},\\
\tau^{-j_\gamma}(\sigma\tau^{-j_\gamma})^{k-1}(z) = j_\gamma\cdot \dfrac{k j_\gamma z-(k+1)}{k-(k-1)j_\gamma z}\quad\text{and}\quad
(\sigma\tau^{-j_\gamma})^{k}(z) = j_\gamma\cdot\dfrac{-k+ (k-1) j_\gamma z}{kj_\gamma z-(k+1)}.
\end{matrix}
\end{equation}
We note that $\rho$ is directly related to $T$ via
\begin{equation}\label{eq:rho.f}
\rho(\gamma_\infty) = -\varepsilon \cdot \frac{j_\gamma}{T\left(\frac{1}{j_\gamma \gamma_\infty}\right)}.
\end{equation}
This relation follows from the fact that $\gamma_\infty$ lies in the region $|\gamma_\infty| > 1$. 
Therefore, one must take the reciprocal. In addition, the linear fractional maps corresponding to $T$ may have determinant $-1$, and the sign $\varepsilon$ compensates for this. This $\varepsilon$ is precisely the sign $\veps_1(x)$ of the first digit $(a_1(x),\veps_1(x))_{s_1(x)}$ in the SCF expansion of $x:=|1/\gamma_\infty|$.
Moreover, we constructed $\bar h$ in~(\ref{eq:inv.barT}) so that $\rho$ satisfies the following property:
\begin{equation}\label{eq:rho.b}
\rho(\gamma_{-\infty}) = \veps j_\gamma\cdot\bar h_{(k,\veps)_s}(-j_\gamma \gamma_{-\infty})\quad\text{if }|1/\gamma_\infty|\in I_{(k,\veps)_s}.
\end{equation}

\begin{thm}\label{th:exc}
Let $\varphi = \varphi_1\circ (\pi|_X)^{-1}$, where $\varphi_1$ is the bijection in Proposition~\ref{pr:bij}.

\noindent(1) The first return map $\Phi$ is equivalent to $\rho$:
$$\begin{tikzpicture}
\node at (0,0) {$\pi(X^*)$};
\node at (5,0) {$\pi(X)$};
\node at (5,-1.5) {$A$};
\node at (0,-1.5) {$A^*$};

\draw[->] (0.6,0)-- node[above] {$\Phi$} (4.4,0);
\draw[->] (0.6,-1.5)-- node[above] {$\rho$} (4.4,-1.5);

\draw[->] (0,-0.5)-- node[left] {$\varphi$} (0,-1);
\draw[->] (5,-0.5)-- node[right] {$\varphi$} (5,-1);

\node at (2.5,-0.6) {$\circlearrowleft$};
\end{tikzpicture}$$

\noindent(2) For $u\in \pi(X)$, $\gamma^u_\infty\in E$ if and only if $\gamma^u$ escapes to a cusp without intersecting $\pi(X)$ for positive time.
\end{thm}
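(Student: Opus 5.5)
By Proposition~\ref{pr:bij} it suffices to work with lifts in $A$. For $\gamma\in A^*$, with $\gamma=\varphi(u)$, I will show two things: first, $\rho(\gamma)$ is a $\Theta$-translate of $\gamma$; second, the first time $t_0>0$ at which $\gamma$ (parametrized from its crossing with $\mathrm{Re}\,z=j_\gamma$) meets a $\Theta$-translate of $\ell_{\pm1}$ with endpoint pair in the appropriate translate of $\mathcal I$ is exactly the crossing of $\gamma$ with $g^{-1}(\ell_{\pm1})$, where $g$ is the group element in \eqref{eq:rho.def}. Once these are established, $\pi(g_{t_0}\varphi^{-1}\gamma)=\varphi^{-1}(\rho\gamma)$, which is the commutative diagram. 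By the symmetry $z\mapsto -\bar z$, which conjugates $\Theta$ to itself and exchanges $\mathcal I_+$ and $\mathcal I_-$, I may assume $j_\gamma=+1$, so $\gamma\in A_+$ with $\gamma_\infty>1$ and $\gamma_{-\infty}\in[-\sqrt3,2-\sqrt3)$.

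\textbf{Step 1 (translate and endpoint conditions).} The first claim is immediate, since each element in \eqref{eq:rho.def} is a word in $\sigma,\tau$. Next I must check that $\rho(\gamma)\in A$, i.e.\ that the new endpoint pair lies in $\mathcal I$. For the forward endpoint, \eqref{eq:rho.f} gives $|\rho(\gamma)_\infty|=1/T(1/\gamma_\infty)>1$, with sign $-\veps_1$. For the backward endpoint, \eqref{eq:rho.b} together with the invariance $\bar h_{(k,\veps)_s}(\mathbb I)\subset\mathbb I$ from the domain lemma places $\veps\,\rho(\gamma)_{-\infty}$ in $[\sqrt3-2,\sqrt3]$. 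This matches $\mathcal I_{-\veps}$, after checking the half-open endpoint conventions of \eqref{eq:I} against $\bar I_{(k,\veps)_s}$.

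\textbf{Step 2 (no earlier return: the main obstacle).} I would handle this case by case on the four ranges of $\gamma_\infty$ in \eqref{eq:rho.def}, tracking which tiles of the tessellation $\mathcal T$ the geodesic crosses between its crossing of $\mathrm{Re}\,z=1$ and its crossing of $g^{-1}(\ell)$.
\begin{itemize}
\item \emph{Even cases}, $\gamma_\infty\in(2k-1,2k+1)$. Here $g=\sigma\tau^{-k}$, and $\gamma$ passes the vertical lines $\mathrm{Re}\,z=3,5,\dots,2k-1$ before entering the cusp region at $2k$. A crossing of $\mathrm{Re}\,z=2m+1$ with $m<k$ is a translate of $\ell_{\pm1}$, but the translated endpoints are $(\gamma_\infty-2m,\gamma_{-\infty}-2m)$ with $\gamma_{-\infty}-2m<-\sqrt3$, so they are not in $\mathcal I_+$. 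Applying $\tau^{-m}$ and then $\sigma$ shows that the semicircle crossings also fail the condition. The first admissible crossing is of $\tau^k\sigma(\ell_{\pm1})$, which is the semicircle from $2k$ to $2k\pm1$.
\item \emph{Odd cases}, $\gamma_\infty$ near $1$. Here $g$ involves $(\sigma\tau^{-1})^{k}$, and the intermediate crossings are the translates $(\tau\sigma)^m(\ell)$ for $m<k$. Applying $(\sigma\tau^{-1})^m$, the backward endpoint becomes $g_0^{-m}$-iterates of $\gamma_{-\infty}$, where $g_0(z)=2-1/z$. I expect to show that these stay outside $[-\sqrt3,2-\sqrt3)$, using the monotonicity argument of Lemma~\ref{le:lift}, Case (2), run backward.
\end{itemize}
This bookkeeping is the hardest part, since one must ensure that every intermediate translate of the cross section is rejected by the endpoint condition and that none is missed.

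\textbf{Step 3 (part (2) and well-definedness).} If $|\gamma_\infty|\in E$, then $1/|\gamma_\infty|$ is an endpoint of some $I_{(k,\veps)_s}$, so $\gamma_\infty$ is a rational cusp. Every candidate intersection in the case analysis of Step 2 lies before that cusp, and each is rejected, so $\gamma^u$ escapes to a cusp without returning. Conversely, if $|\gamma_\infty|\notin E$, then Step 2 produces a return, so $\Phi$ is defined on $\pi(X^*)$. Finally, I would verify that the parameter $t_0$ is positive and finite by noting that $g^{-1}(\ell)$ separates the crossing point of $\gamma$ with $\mathrm{Re}\,z=1$ from $\gamma_\infty$.
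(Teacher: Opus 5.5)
Your proposal follows essentially the same route as the paper. Both arguments enumerate the edges of $\mathcal T$ that $\gamma$ crosses after $\xi_\gamma$ and reject each intermediate edge by the backward-endpoint condition: $\tau^{-i}\gamma_{-\infty}<-\sqrt3$ for the vertical lines in the even case, and $(\sigma\tau^{-1})^i\gamma_{-\infty}\in[2-\sqrt3,1)$ for the arcs $\ell(1,\frac{i+1}{i})$ in the odd case. They then identify the $k$-th crossing, on $\mathcal C_{(k,\veps)_s}$, with the return given by $g\gamma=\rho(\gamma)\in A$; your Step~1 justifies this last membership more carefully than the paper does.

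The bookkeeping to tighten is that each intermediate edge is also a translate of $\ell_{-1}$, via $\tau^{-i-1}$ in the even case and $\tau^{-1}(\sigma\tau^{-1})^i$ in the odd case. That translate must be rejected too, which it is, because the translated forward endpoint lies in $(-1,\infty)$. By contrast, in the even case $\gamma$ crosses no semicircle edges before $\mathcal C_{(k,\veps)_s}$, so your remark about ``applying $\tau^{-m}$ and then $\sigma$'' is not the check that is needed.
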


\begin{proof}
 Let $v \in \pi(X^*)$ and let $\gamma = \varphi(v) \in A^*$.
Without loss of generality, we assume that $\gamma_\infty >0$.
Let $(k,\veps)_s \in \mathscr A$ be such that $\gamma_\infty$ lies in $(I_{(k,\veps)_s})^{-1}$.
Denote by $\ell(\alpha,\beta)$ the geodesic connecting $\alpha$ and $\beta$.
Let
\begin{equation}\label{eq:Cke}
\mathcal C_{(k,\veps)_s}=\text{the geodesic connecting the endpoints of the interval $(I_{(k,\veps)_s})^{-1}$}
\end{equation}
(see the blue half circle in Figure~\ref{fig:5} and \ref{fig:6}).

Let $(t_i)$ be the sequence of times at which $\gamma$ crosses the edges of $\mathcal{T}$ in forward time.
We first claim that the time $t_k$ at which  $\gamma (t_k)$ lies on $\mathcal C_{(k,\veps)_s}$ is the first return time.

If $s=e$, then $\gamma$ crosses the $k-1$ vertical lines $\ell(2i+1,\infty)$, for $i=1,\dots,k-1$ of $\mathcal T$, that is, the base point of $\gamma(t_i)$ is contained in $\ell(2i+1,\infty)$.
See Figure~\ref{fig:5}.
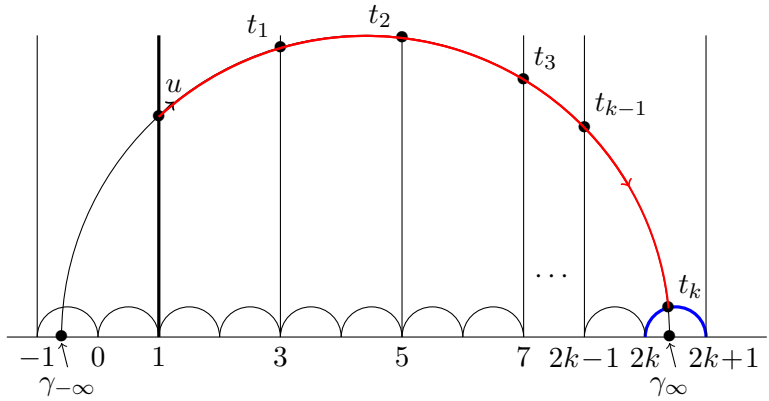
\begin{figure}[h]\color{black}
\begin{tikzpicture}[scale=0.8]
\draw (-1.5,0)--(11,0);
\draw (0,0) arc (180:0:1/2); 
\draw (0,0) arc (0:180:1/2); 
\draw[very thick] (1,0) -- (1,5);
\draw (-1,0) -- (-1,5);
\node[below] at (0,0) {$0$};
\node[below] at (1,0) {$1$};
\node[below] at (-1,0) {$-1$};

\node at (7.5,1) {$\cdots$};

\draw (2,0) arc (180:0:1/2);
\draw (2,0) arc (0:180:1/2);
\draw (3,0) -- (3,5); 
\node[below] at (3,0) {$3$};

\draw (4,0) arc (180:0:1/2);
\draw (4,0) arc (0:180:1/2);
\draw (5,0) -- (5,5); \node[below] at (5,0) {$5$};

\draw (6,0) arc (180:0:1/2);
\draw (6,0) arc (0:180:1/2);
\draw (7,0) -- (7,5); \node[below] at (7,0) {$7$};

\draw (8,0) -- (8,5);
\draw[very thick, blue] (9,0) arc (180:0:1/2);
\draw (9,0) arc (0:180:1/2);
\draw (10,0) -- (10,5);

\node[below] at (9,0) {$2k$};
\node[below] at (10.3,0) {$2k\!+\!1$};
\node[below] at (8,0) {$2k\!-\!1$};

%gamma geodesic
\draw (-1/2-.1,0) arc (180:0:5);
\node at (-1/2-.1,0) {$\bullet$};
\node[below] at (-1/2,-0.5) {$\gamma_{-\infty}$};
\draw[->] (-1/2,-0.5) -- (-1/2-.1,-0.1);
\node at (10-1/2-.1,0) {$\bullet$};
\node[below] at (10-1/2-.1,-0.5) {$\gamma_{\infty}$};
\draw[->] (10-1/2,-0.5) -- (10-1/2-.1,-0.1);

\node at (3, 4.8) {$\bullet$};
\node[above left] at (3, 4.8) {$t_1$};
\node at (5, 4.963) {$\bullet$};
\node[above left] at (5, 4.963) {$t_2$};
\node at (7, 4.27) {$\bullet$};
\node[above right] at (7, 4.27) {$t_3$};
\node at (8, 3.469) {$\bullet$};
\node[above right] at (8, 3.469) {$t_{k-1}$};

\node at (1, 3.65) {$\bullet$};
\node[above] at (1+0.25, 3.65+0.25) {$u$};
\draw[->, thick] (1, 3.65) -- (1+0.5/2,3.65+0.5/2);

\node at (9.376,0.484) {$\bullet$};
\node[above right] at (9.376,0.484) {$t_k$};

% 빨간 arc (xi to eta)
\draw[thick, red] (1, 3.65) arc (132.97:5.56:5); % 대략적인 각도 추정
\draw[thick, red,->] (1, 3.65) arc (132.97:30:5); % 화살표
\end{tikzpicture}
\caption{Illustration of the case $s=e$: the geodesic $\gamma$ successively crosses the $k-1$ vertical lines $\ell(2i+1,\infty)$, and meets $C_{(k,\varepsilon)_s}$ at time $t_k$, which is the first return time.}
\label{fig:5}
\end{figure}\\
The unique element of $\Theta$ that sends $\ell(2i+1, \infty)$ to $\ell(1, \infty)$ is $\tau^{-i}$, and the one that sends it to $\ell(-1, \infty)$ is $\tau^{-i-1}$.  
From 
$\tau^{-i}\gamma_{-\infty}<-\sqrt{3}$
for $i=1,\dots,k$, we deduce that
$\tau^{-i}\gamma(t_i)\not\in X$ and $\tau^{-i-1}\gamma(t_i)\not\in X.$

If $s=o$, then $\gamma$ crosses the $k-1$ arcs $\ell(1,\frac{i+1}{i})$ of $\mathcal T$ for $i=1,\dots,k-1$, that is, the base point of $\gamma(t_i)$ lies in $\ell(1,\frac{i+1}{i})$.
See Figure~\ref{fig:6}.
\begin{figure}\color{black}
\begin{tikzpicture}[scale=8]%, baseline={(0,-1)}]
\clip (1-0.1,-0.15) rectangle (2+0.1,0.7);
\draw (-1.1,0)--(2.1,0);
\draw (-1,0) -- (-1,1.25);
\draw (0,0) arc (180:0:1/2); 
\draw (0,0) arc (0:180:1/2); 
\draw[very thick] (1,0) -- (1,1.25);
%\draw (2,0) arc (180:135:1/2); 
\draw (2,0) arc (0:180:1/2); 
%\draw (3,0) -- (3, 1.5);
\node[below] at (0,0) {$0$};
\node[below] at (1,0) {$1$};
\node[below] at (-1,0) {$-1$};

\draw (1,0) arc (180:0:1/4) node[below] {\color{black}$\frac32$};
\draw (3/2,0) arc (180:0:5/6-3/4);
\draw (2,0) arc (0:180:1-5/6);

\draw (1,0) arc (180:0:1/6) node[below] {\color{black}$\frac43$};
\draw (4/3,0) arc (180:0:7/10-4/6);
\draw (3/2,0) arc (0:180:3/4-7/10);

\node at (5/4,0.13-0.01) {$.$};
\node at (5/4-0.01,0.13-0.02) {$.$};
\node at (5/4-0.02,0.13-0.03) {$.$};

%\draw[thick, green] (1,0) arc (180:0:1/8) node[below] {\color{black}$\frac54$};
%\draw (5/4,0) arc (180:0:9/14-5/8);
%\draw (4/3,0) arc (0:180:4/6-9/14);

\draw (1,0) arc (180:0:1/10+0.02) node[below] {\color{black}$\frac{k}{k-1}$};
\draw (1,0) arc (180:0:1/12);
\node[below] at (7/6-0.03,0) {$\frac{k+1}{k}$};
\draw[very thick, blue] (7/6,0) arc (180:0:13/22-7/12+0.01);
\draw (6/5+0.04,0) arc (0:180:6/10+0.02-7/12-13/22+7/12-0.01);

 \node[below] at (2,0) {$2$};

%\gamma geodesic
\draw (-1.1,0) arc (180:0:19/32+1.1/2);
%\node at (-0.4,0) {$\bullet$};
%\node[below] at (-0.4, 0) {$\gamma_{-\infty}$};
%\node at (19/16,0) {$\bullet$};
\node[below] at (19/16, -0.08) {$\gamma_{\infty}$};
\draw[->] (19/16,-0.08) -- (19/16,-0.01);

\node at (1.135,0.342) {$\bullet$};
\node at (1.163, 0.234) {$\bullet$};
\node at (1.175, 0.166) {$\bullet$};
%\node at (1.182, 0.111) {$\bullet$};
\node at (1.183, 0.102) {$\bullet$};
\node at (1.187, 0.017) {$\bullet$};

\node[above right] at (1.135-0.02,0.342+0.02) {$t_1$};
\node[above right] at (1.163, 0.234) {$t_2$};
\node[right] at (1.175, 0.166+0.03) {$t_3$};
%\node[right] at (1.182, 0.111) {$t_4$};
\node[left] at (1.183, 0.102) {$t_{k-1}$};
%excursion
\draw[thick, red] (1,0.624) arc(33.11:0.38:1.143);
\draw[thick, red,->] (1, 0.624) arc(33.11:6:1.143);

\node at (1,0.624) {$\bullet$};
\node[above right] at (1,0.624-0.03) {$u$};
\draw[->] (1,0.624) -- (1+1/30, 0.624-1.524/35);
%\node at (1.186,0.0076) {$\bullet$};
\node[above left] at (1.186,0.0076) {$t_k$};
\end{tikzpicture}
\caption{Illustration of the case $s=o$: the geodesic $\gamma$ successively crosses the $k-1$ arcs $\ell\!\left(1,\frac{i+1}{i}\right)$ and meets $C_{(k,\varepsilon)_s}$ at time $t_k$, which is the first return time.}
\label{fig:6}
\end{figure}

The unique element of $\Theta$ sending $\ell(1,\frac{i+1}{i})$ to $\ell(1,\infty)$ is $(\sigma\tau^{-1})^i$, and the one sending it to $\ell(-1,\infty)$ is $\tau^{-1}(\sigma\tau^{-1})^i$ for $i=1,\dots,k-1$.
In this case, $2-\sqrt{3}\le(\sigma\tau^{-1})^i\gamma_{-\infty}\le 1$.
Therefore, $(\sigma\tau^{-1})^i\gamma(t_i)\not\in X$.
Moreover, $\tau^{-1}(\sigma\tau^{-1})^i\gamma(t_i)\not\in X$ since $\tau^{-1}(\sigma \tau^{-1})^i(\gamma_\infty)>0$.

In the case where $\gamma_\infty\not\in E$, 
whether $s = e$ or $s = o$, the $k$-th edge that $\gamma$ crosses in $\mathcal{T}$ is exactly $\mathcal{C}_{(k, \veps)_s}$, i.e., the base point of $\gamma(t_k)$ is contained in $\mathcal C_{(k,\veps)_s}$.
Let $g\in\Theta$ such that $g.\mathcal C_{(k,\veps)_s}=\ell(-\veps,\infty)$.
For each case, the explicit expression of $g$ is
$$g=\begin{cases}
\sigma\tau^{-k},&\text{if }s=e,\ \veps=\pm 1,\\
%\sigma\tau^{-k},&\text{if }s=e,\ \veps=-1,\\
\tau^{-1}(\sigma\tau^{-1})^{k-1},&\text{if }s=o, \ \veps=-1,\\
(\sigma\tau^{-1})^k,&\text{if }s=o,\ \veps=+1.
\end{cases}
$$
From \eqref{eq:rho.def}, we have $g.\gamma = \rho(\gamma)\in A$.
Thus $\Phi(v) = g.(\gamma(t_k))$.
Therefore, $\varphi \circ \Phi(v) = g.\gamma = \rho(\gamma) =\rho\circ \varphi(v)$.
Part (1), as well as our first claim, follows.

If $\gamma_{\infty}\in E$, then after crossing $k-1$ edges of $\mathcal T$, $\gamma$ does not intersect any further edge of $\mathcal T$, which implies that $\gamma$ escapes to a cusp.
\end{proof}

We now show that the first return map $\Phi$ is equivalent to the double cover $\widetilde T$ of the natural extension of $T$ defined as follows.
\begin{defn}
Let us consider the double cover of $\Omega$, namely, $$\Omega^\pm = (0,1)\times (\sqrt{3}-2,\sqrt{3}]\times\{\pm 1\},$$
and define 
$$(\Omega^\pm)^* = \Omega^\pm\setminus\{(x,y,j):x \in \partial I_{(k,\veps)_s} \text{ for some } (k,\veps)_s\}.$$
We define \emph{the double cover of the natural extension} $\widetilde T:\Omega^\pm\to\Omega^\pm$ by $$\widetilde T(x,y,j) = (\bar T(x,y), -\varepsilon_1j).$$
The last coordinate reflects the change in the direction of the geodesic after applying $\rho$.
\end{defn}
Let us define a bijection $J:A \to \Omega^\pm$ by
$$J(\gamma) = \begin{cases}\left(\frac1{\gamma_\infty},-\gamma_{-\infty},1\right)&\text{if }\gamma\in A_+,\\
\left(-\frac1{\gamma_\infty},\gamma_{-\infty},-1\right)&\text{if }\gamma\in A_-.
\end{cases}$$
From the definition of $\rho$ and \eqref{eq:Tbar}, it follows that $\widetilde T J = J \rho$.
By Theorem~\ref{th:exc}, $\Phi$ and $\widetilde T$ are equivalent.

\begin{thm}\label{th:com2}
The first return map $\Phi$ is equivalent to $\widetilde T$:
\begin{figure}[h] \centering
\begin{tikzpicture}
\node at (0,0) {$\pi(X^*)$};
\node at (5,0) {$\pi(X)$};
\node at (5,-1.5) {$\Omega^\pm$};
\node at (0,-1.5) {$(\Omega^\pm)^*$};
\draw[->] (0.6,0)-- node[above] {$\Phi$} (4.4,0);
\draw[->] (0.6,-1.5)-- node[above] {$\widetilde T$} (4.4,-1.5);
\draw[->] (0,-0.5)-- node[left] {$J\circ \varphi$} (0,-1);
\draw[->] (5,-0.5)-- node[right] {$J\circ \varphi$} (5,-1);
\node at (2.5,-0.6) {$\circlearrowleft$};
\end{tikzpicture}
\label{fi:diag2}
\end{figure}
\end{thm}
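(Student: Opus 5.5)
The plan is to paste Theorem~\ref{th:exc}(1) onto the conjugacy $\widetilde T J = J\rho$. The only real work is checking that $J$ is a bijection $A\to\Omega^\pm$ which carries $A^*$ onto $(\Omega^\pm)^*$, and that the identity $\widetilde T\circ J=J\circ\rho$ holds on $A^*$.

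\textbf{Step 1: $J$ is a bijection.} For $\gamma\in A_+$ we have $\gamma_\infty\in(1,\infty)$, so $1/\gamma_\infty\in(0,1)$. The backward endpoint satisfies $\gamma_{-\infty}\in[-\sqrt3,-\sqrt3+2)$, hence $-\gamma_{-\infty}\in(\sqrt3-2,\sqrt3]$. For $\gamma\in A_-$ the same holds with the signs reversed. Each of the two sheets $j=\pm1$ is then the image of a coordinatewise monotone bijection, so $J$ is bijective. Since $|1/\gamma_\infty|\in\partial I_{(k,\veps)_s}$ exactly when $|\gamma_\infty|\in E$, the map $J$ restricts to a bijection $A^*\to(\Omega^\pm)^*$. (Here one must confirm that $E$ is precisely the set of reciprocals of the endpoints of the branch intervals in \eqref{eqn:2.1}. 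This is a direct check, including the endpoint $1/2$, whose reciprocal $2=k$ with $k=2$.)

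\textbf{Step 2: the conjugacy.} Take $\gamma\in A_+^*$, so $j_\gamma=1$, and let $x=1/\gamma_\infty\in I_{(k,\veps)_s}$. By \eqref{eq:rho.f}, $\rho(\gamma)_\infty=-\veps/T(x)$, so $\rho(\gamma)$ lies in $A_{-\veps}$. Its $J$-image therefore has first coordinate $-\veps\cdot\bigl(1/\rho(\gamma)_\infty\bigr)\cdot(-1)\cdots$; computing carefully in both cases $\veps=\pm1$, the first coordinate is $T(x)$ and the sign is $-\veps j_\gamma$, which is the third coordinate of $\widetilde T$.

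For the second coordinate, \eqref{eq:rho.b} gives $\rho(\gamma)_{-\infty}=\veps\,\bar h_{(k,\veps)_s}(-\gamma_{-\infty})$. Applying the $J$-sign rule for $A_{-\veps}$ (multiply by $\veps$ when $-\veps=1$ gives $-$, etc.) returns $\bar h_{(k,\veps)_s}(y)$ with $y=-\gamma_{-\infty}$, matching $\bar T(x,y)$. The case $\gamma\in A_-$ is symmetric, with $j_\gamma=-1$ throughout. The main obstacle is bookkeeping this sign cancellation: the $\veps$ in \eqref{eq:rho.b} must exactly undo the sign convention in $J$ on the new sheet $-\veps j_\gamma$. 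I would tabulate the four cases $(j_\gamma,\veps)$ explicitly.

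\textbf{Step 3: conclusion.} By Theorem~\ref{th:exc}(1), $\varphi\circ\Phi=\rho\circ\varphi$ on $\pi(X^*)$. Combining this with Step 2 gives
\[
(J\circ\varphi)\circ\Phi=J\circ\rho\circ\varphi=\widetilde T\circ(J\circ\varphi).
\]
Here $J\circ\varphi$ is a bijection by Proposition~\ref{pr:bij} and Step 1, and it maps $\pi(X^*)$ onto $(\Omega^\pm)^*$. So the diagram commutes, and $\Phi$ and $\widetilde T$ are equivalent.
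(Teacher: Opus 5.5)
Your proposal is correct and takes essentially the same route as the paper, which observes that $\widetilde T J = J\rho$ and then invokes Theorem~\ref{th:exc}(1). The paper reads the conjugacy off the shift description \eqref{eq:rho.shift} together with \eqref{eq:Tbar}, while you check it through the equivalent M\"obius identities \eqref{eq:rho.f} and \eqref{eq:rho.b}; you also make explicit that $J$ carries $A^*$ onto $(\Omega^\pm)^*$, which the paper leaves implicit.
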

\color{black}

Using the bijection $\varphi_2 \circ \varphi : \pi(X) \to \mathcal{I}$ in Proposition~\ref{pr:bij} and the parametrization $(\alpha,\beta)\in\mathcal{I}$, we obtain a natural $\Phi$-invariant measure on $\pi(X)$ as the pullback $(\varphi_2\circ\varphi)^*\nu$ of 
$$\nu = \frac{d\alpha\, d\beta}{(\alpha - \beta)^2}.$$
By the change of variables $J\circ \varphi_2^{-1}:\mathcal I\to \Omega^\pm$, the pushforward measure is 
\begin{equation}\label{eq:msr}
\widetilde{\mu} := (J\circ \varphi_2^{-1})_*\nu = \frac{dx\, dy\, d\delta}{(1 + x y)^2},
\end{equation}
where $d\delta$ denotes the counting measure on $\{\pm1\}$. See \cite[Section~3.1]{Ser85} for further details.

\begin{lem}\label{le:Tbar-inv}
The measure 
$$d\bar{\mu}=\frac{dxdy}{\log(2+\sqrt{3})(1+xy)^2}$$
is $\overline{T}$-invariant.
\end{lem}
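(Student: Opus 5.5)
We prove invariance branch by branch, using that on each cylinder $I_{(a,\veps)_s}\times[\sqrt3-2,\sqrt3]$ the map $\T$ is the restriction of a diagonal Möbius action $(x,y)\mapsto (g x, \tilde g y)$, and that the density $(1+xy)^{-2}$ is exactly the form that such diagonal actions preserve. Since $\T$ is a bijection on $\Omega'$ (full measure), it suffices to show that for every branch the Jacobian identity
\begin{equation*}
\frac{|\partial_x \T_1|\,|\partial_y \T_2|}{(1+\T_1\T_2)^2}=\frac{1}{(1+xy)^2}
\end{equation*}
holds, where $\T=(\T_1,\T_2)$. Then $\bar\mu(\T(B))=\bar\mu(B)$ for every Borel $B$ inside a cylinder. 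Moreover, the images $\T(I_{(a,\veps)_s}\times[\sqrt3-2,\sqrt3])=[0,1]\times\bar I_{(a,\veps)_s}$ partition $\Omega$ up to null sets, because the $\bar I_{(b,\eta)_t}$ partition $[\sqrt3-2,\sqrt3]$ by the lemma on $\mathbb I$. Together these give $\bar\mu(\T^{-1}B)=\bar\mu(B)$.

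For the Jacobian identity, the cleanest route is the geometric one already set up in Section~3. The measure $\nu=d\alpha\,d\beta/(\alpha-\beta)^2$ on pairs of endpoints is invariant under the diagonal action of $\mathrm{SL}(2,\mathbb R)$, and hence under $\Theta$. The map $\rho$ acts on $(\gamma_\infty,\gamma_{-\infty})$ piecewise by elements $g\in\Theta$, as in \eqref{eq:rho.def}. Using $J$ together with the relations $\widetilde TJ=J\rho$ and \eqref{eq:msr}, which say $J_*\nu=dx\,dy\,d\delta/(1+xy)^2$, we conclude that $\widetilde T$ preserves $\widetilde\mu$ on each branch.

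It remains to project from the double cover to $\T$. The last coordinate of $\widetilde T$ only flips the sign $j$. So, fixing $j=1$, each branch of $\widetilde T$ restricted to the sheet $j=1$ maps it to the sheet $-\veps_1$ and is, in the first two coordinates, exactly the corresponding branch of $\T$. The density $(1+xy)^{-2}$ is the same on both sheets. Hence the local Jacobian identity for $\T$ on each branch follows from the one for $\widetilde T$.

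As a sanity check, or as an alternative purely analytic proof, one can verify the identity directly for the even branches. There $x=1/(2a+\veps X)$ and $y=\veps Y\cdot$(appropriate form from \eqref{eq:inv.barT}). The check reduces to $1+xy=(2a+\veps X+\ldots)$ and factors cleanly. For the odd branches, one writes $h_{(a,\veps)_o}$ and $\bar h_{(a,\veps)_o}$ as matrices $M$ and $\veps$-twisted ${}^tM^{-1}$-type matrices. The general fact used is that if $x=Mx'$ and $y=M^{\ast}y'$ with $M^\ast=\begin{pmatrix}d&c\\ b&a\end{pmatrix}$-type transpose for $M=\begin{pmatrix}a&b\\c&d\end{pmatrix}$ of determinant $\pm1$, then $dx\,dy/(1+xy)^2$ is preserved. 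Finally, the normalizing constant comes from integrating $(1+xy)^{-2}$ over $\Omega$. By Corollary~\ref{abscontinv}'s computation, this gives $\int_0^1\frac1x\log$-type terms, resulting in $\log(2+\sqrt3)$, which the normalization absorbs.

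The main obstacle is bookkeeping. One must confirm that the sign conventions in $J$ (the $-\gamma_{-\infty}$ and the $\pm$ sheets), in \eqref{eq:rho.f} and in \eqref{eq:rho.b} really make the second coordinate of $\T$ coincide with the Möbius action induced by the same $g\in\Theta$ on $\gamma_{-\infty}$. Concretely, this means checking that the pair $(h_{(a,\veps)_s},\bar h_{(a,\veps)_s})$ is a "transpose pair" for each of the four branch types. I would do this first for $(k,\veps)_e$ directly, then reduce the odd branches to the even ones via the conjugacy $\iota$.
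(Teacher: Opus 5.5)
Your geometric argument is correct and is essentially the paper's proof. $\widetilde\mu=J_*\nu$ is $\widetilde T$-invariant because $\nu$ is invariant under the diagonal action of $\Theta$ and $\rho$ acts branchwise by elements of $\Theta$. Dropping the sign coordinate $j$ then gives $\overline{T}$-invariance, and the normalizing constant is computed as you say. Your branchwise Jacobian identity together with the partition $\{[0,1]\times\bar I_{(a,\veps)_s}\}$ simply spells out the paper's one-line pushforward step.

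One correction to your optional analytic check: the companion of $x=M.x'$ must be $y'=M^t.y$, i.e.\ $y=(M^t)^{-1}.y'$. It cannot be $y=\begin{pmatrix}d&c\\ b&a\end{pmatrix}.y'$, because that pairing preserves $dx\,dy/(1-xy)^2$ rather than $dx\,dy/(1+xy)^2$.
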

\begin{proof}
The pushforward of the $\widetilde{T}$-invariant measure $\tilde{\mu}$ under the projection from $\Omega^\pm$ to $[0,1]\times \mathbb I$ is 
$$\frac{dxdy}{(1+xy)^2}.$$
Thus, it is $\overline{T}$-invariant.
The normalizing constant is
\begin{align*}\int_0^1\int_{\sqrt{3}-2}^{\sqrt{3}}\frac{dxdy}{(1+xy)^2} 
&= \int_0^1\frac{1}{x}\left(\frac{1}{1+(\sqrt{3}-2)x}-\frac{1}{1+\sqrt{3}x}\right)dx\\
&= \int_0^1 \left( \frac{\sqrt{3}}{1+\sqrt{3}x}-\frac{\sqrt{3}-2}{1+(\sqrt{3}-2)x}\right)dx =\log(2+\sqrt{3}).
\end{align*}

\end{proof}

For almost every geodesic $\gamma_0$ on $\M$, the set of lifts $\gamma$ satisfying $(\gamma_{\infty},\gamma_{-\infty})\in\mathcal I$ corresponds to a single $\widetilde T$-orbit in $\Omega$. Namely, $$\left\{(\alpha,\beta,j)\in\Omega^\pm:\ \pi(\gamma) =\gamma_0,\  \alpha=\frac{1}{\gamma_\infty},\ \beta=-\gamma_{-\infty},\ j=\mathrm{sgn}(\gamma_\infty) \right\}.$$

\subsection{Return time of geodesic flow}\label{sec:3.2}
Let $r_1(x,y,j)$ be the first return time of $(J\circ \varphi)^{-1}(x,y,j)\in \pi(X^*)$ to $\pi(X)$, for $(x,y,j)\in (\Omega^\pm)^*$.
The suspension space with the roof function $r_1$ is defined by 
$$\Omega^{r_1} = \{(x,y,j,t)\in\Omega^\pm\times \mathbb R: 0\le t\le r_1(x,y,j)\}/\sim,$$ where $(x,y,j,r_1(x,y,j))\sim (\widetilde{T}(x,y,j),0)$, and the suspension flow is given by $\phi_s(x,y,j,t) = (x,y,j,t+s)\in \Omega^{r_{1}}$.
There exists a natural isomorphism between the suspension flow $\phi_s$ of $\Omega^{r_1}$ and the geodesic flow $g_s$ of $T^1\M$.

The first return time coincides with the hyperbolic length of the corresponding geodesic segment.
For each $v\in \pi(X^*)$, let $\tilde \gamma^v\in A$ be the geodesic determined by the lift of $v \in X$.
In the proof of Theorem~\ref{th:exc}, we deduce that 
if $1/\tilde\gamma^v_\infty\in \pm I_{(k,\veps)_s}$, then the first return point is the intersection of $\tilde\gamma^{{v}}$ 
and $\pm \mathcal C_{(k,\veps)_s}$, where $\mathcal  C_{(k,\veps)_s}$ is defined as in  \eqref{eq:Cke}.

\begin{figure}[t]
\begin{tikzpicture}[scale=1]
\draw (-1.1,0)--(6.2,0);
\draw (0,0) arc (180:0:1/2); 
\draw (0,0) arc (0:180:1/2); 
\draw (1,0) -- (1,3);
\draw (-1,0) -- (-1,3);
\node[below] at (0,0) {$0$};
\node[below] at (1,0) {$1$};
\node[below] at (-1,0) {$-1$};

\node at (2.5,1) {$\cdots$};

\draw[thick, blue] (5,0) arc (180:0:1/2);
\draw (5,0) arc (0:180:1/2);
\draw (6,0) -- (6,3);
\draw (4,0) -- (4,3);

\node[below] at (5,0) {\footnotesize$2a_1$};
\node[below] at (6,0) {\footnotesize$2a_1\!+\!1$};
\node[below] at (4,0) {\footnotesize$2a_1\!-\!1$};

%gamma geodesic
\draw (-1/2-.1,0) arc (180:0:3);
\node at (-1/2-.1,0) {$\bullet$};
\node[below] at (-1/2,-0.5) {$\gamma_{-\infty}$};
\draw[->] (-1/2,-0.5) -- (-1/2-.1,-0.1);
\node at (6-1/2-.1,0) {$\bullet$};
\node[below] at (6-1/2-.1,-0.5) {$\gamma_{\infty}$};
\draw[->] (6-1/2,-0.5) -- (6-1/2-.1,-0.1);

%excursion
\draw[thick, red] (1, 2.654) arc(117.83:9.2:3);
\draw[thick, red,->] (1, 2.654) arc(117.83:80:3);

\node at (1,2.654) {$\bullet$};
\node[above] at (1-.1,2.654) {$\xi$};
\node at (831/155,{6*sqrt(154)/155}) {$\bullet$};
\node[above] at (831/155+0.1,{6*sqrt(154)/155}) {$\eta$};
\end{tikzpicture}
\begin{tikzpicture}[scale=2.4, baseline={(0,-1)}]
\draw (-1.1,0)--(2.1,0);
\draw (-1,0) -- (-1,1.25);
\draw (0,0) arc (180:0:1/2); 
\draw (0,0) arc (0:180:1/2); 
\draw (1,0) -- (1,1.25);
%\draw (2,0) arc (180:135:1/2); 
\draw (2,0) arc (0:180:1/2); 
%\draw (3,0) -- (3, 1.5);
\node[below] at (0,0) {$0$};
\node[below] at (1,0) {$1$};
\node[below] at (-1,0) {$-1$};

%\draw (4,0) arc (180:0:1/2);
%\draw (4,0) arc (0:180:1/2);
%\draw (5,0) -- (5,3);
%\draw (3,0) -- (3,3);

\draw (1,0) arc(180:0:0.34);
\draw[thick, blue] (1.68,0) arc(0:180:0.125);
\draw (1.28,0) arc(180:0:0.075);
\draw (1,0) arc(180:0:0.14);
\node[below] at (1.25, 0) {$\frac{a_{1}+1}{a_{1}}$};
\node[below] at(1.45,-0.3) {$\frac{2a_{1}+1}{2a_{1}-1}$};
\draw[->] (1.45,-0.3) -- (1.45,-0.05);
\node[below] at (1.73+0.3,-0.3) {$\frac{a_{1}}{a_{1}-1}$};
\draw[->] (1.73+0.3,-0.3) -- (1.73,-0.05);

%\node[below] at (1+2/3,0) {$c$};

\node at (1.75,0.2) {$\dots$};
\node[below] at (2,0) {$2$};
%\node[below] at (3,0) {$3$}

%\gamma geodesic
\draw (-0.4,0) arc (180:0:1);
\node at (-0.4,0) {$\bullet$};
\node[below] at (-0.4, 0) {$\gamma_{-\infty}$};
\node at (2-0.4,0) {$\bullet$};
\node[below] at (2-0.4, 0) {$\gamma_{\infty}$};

%excursion
\draw[thick, red] (1, 0.916) arc(66.5:6.7:1);
\draw[thick, red,->] (1, 0.916) arc(66.5:40:1);

\node at (1,0.916) {$\bullet$};
\node[above] at (1+.05,0.9) {$\xi$};
\node at (1.595,0.113) {$\bullet$};
\node[above] at (1.55,0.115) {$\eta$};
\end{tikzpicture}
\caption{The first excursion of $\gamma$ when $\varepsilon_1=1$ and $s_{1}=e$ (left), and the first excursion of $\gamma$ when $\varepsilon_1=-1$ and $s_{1}=o$ (right).
The first excursion is the red arc from $\xi$ to $\eta$.
The blue arc is $\mathcal C_{(k,\veps)_s}$.
}
\label{fi:exc}
\end{figure}
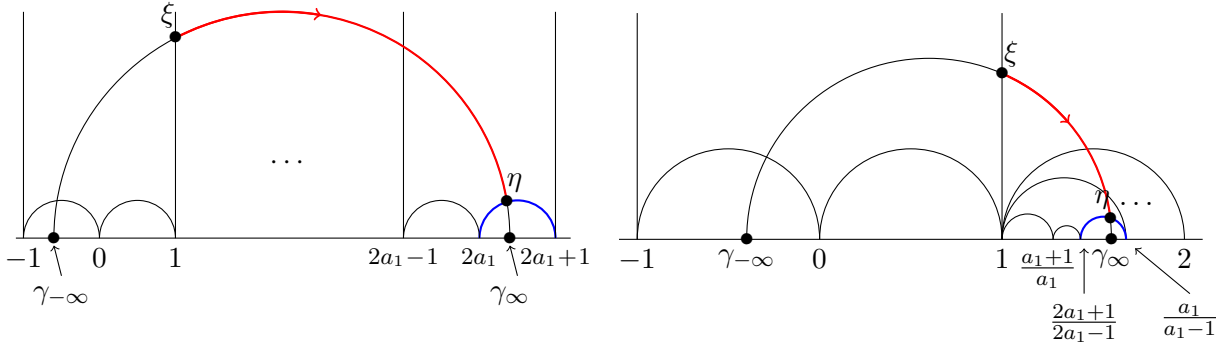

\begin{defn}\label{de:exc}
Let $\gamma\in A$, that is, $\gamma$ is a geodesic on $\bH$ satisfying $(\gamma_\infty,\gamma_{-\infty})\in\mathcal I$.
Let $(k,\veps)_s\in\mathscr A$ be a letter such that $1/|\gamma_\infty|\in I_{(k,\veps)_s}$.
We recall that $j_\gamma=\mathrm{sgn}(\gamma_\infty)$, and $\mathcal C_{(k,\veps)_s}$ and $\ell(\alpha,\beta)$ are as defined in the proof of Theorem~\ref{th:exc}.
Let 
$$\xi_\gamma:=\ell(j_\gamma,\infty)\cap \gamma\quad\text{ and }\quad \eta_\gamma:= j_\gamma\cdot\mathcal C_{(k,\veps)_s}\cap \gamma.$$
Let 
$$c(\gamma) := \text{the half-open geodesic segment of }\gamma\text{ from }\xi_\gamma\text{ to }\eta_\gamma, \text{ closed at $\xi_\gamma$ and open at $\eta_\gamma$}.$$

For $v\in T^1\M$,
there exists a unique lift $\tilde\gamma^v\in A$ of $\gamma^v$ such that $\tilde\gamma^v(0)$ lies in $c({\tilde\gamma^v})$.
\begin{enumerate}
\item 
We call $c({\tilde\gamma^v})$ \emph{the first excursion segment of $\gamma^v$}.
For each $n\ge 1$, we call $c(\rho^{n-1}(\tilde\gamma^v))$ \emph{the $n$-th excursion segment of $\gamma^v$}. See Figure~\ref{fi:exc}.
Note that 
$$\gamma^v|_{[-t,\infty)} = \bigsqcup_{n\ge 1}\pi(c(\rho^{n-1}(\tilde\gamma^v))),$$
where $t = d_{\bH}(\xi_{\tilde\gamma^v}, \tilde\gamma^v(0))$.

\item \emph{The $n$-th excursion time} $r_n=r_n(\gamma^v)$ is the hyperbolic length of the $n$-th excursion segment.
\item \emph{The $n$-th return time} $T_n = T_n(\gamma^v)$ is $r_1+r_2+\dots+r_n$.
\end{enumerate}
\end{defn}

We consider  orientaion-reversing involution maps $\iota$ and $\widetilde{\iota}$ that exchange the roles of $e$-type and $o$-type, given by 
\begin{equation}\label{eq:transf}
\iota(z) = \frac{1-\bar z}{1+\bar z}\qquad\text{ and }\qquad\widetilde{\iota}(z) = \frac{\bar{z}+1}{\bar{z}-1}.
\end{equation}
Note that $\iota$ is a reflection across the circle $|z+1|=\sqrt{2}$, and $\tilde{\iota}$ is a reflection across the circle $|z-1|=\sqrt{2}$.
It follows from the fact that $\iota$ and $\widetilde{\iota}$ exchange $\Theta.\infty$ and $\Theta.1$.
These maps allow us to define counterpart in $s=o$ of the quantities for $s=e$
\begin{defn}
We define the \emph{convergents} of the SCF of $x=[0;(a_1,\varepsilon_1)_{s_1},(a_2,\varepsilon_2)_{s_2},\dots]$ by
$$\frac{P_n(x)}{Q_n(x)} = h_{(a_1,\veps_1)_{s_1}}\circ h_{(a_2,\veps_2)_{s_2}}\circ\cdots\circ h_{(a_n,\veps_n)_{s_n}}(0).$$
(See \eqref{eq:inv_br} for the definition of the inverse branches $h_{(a,\veps)_s}$.)
We also define 
$$
\widehat{P}_n(x) = P_n(\iota(x))\quad\text{and}\quad
\widehat{Q}_n(x) = Q_n(\iota (x)).
$$
\end{defn}

The following theorem is the main theorem in this subsection: the average excursion time admits an explicit expression.

\begin{thm}\label{th:exctime}
For $x\in(0,1)$, let $v\in T^1\M$ be such that the unique lift $\tilde\gamma^v\in A$ of $\gamma^v$ as in Definition~\ref{de:exc}-(1) satisfies $|\tilde\gamma^v_\infty|=1/x$.
For almost every $x$, 
$$\lim_{N\to\infty}\frac{T_N}{N}= \lim_{N\to\infty}\frac{2\log Q_N}{N} = \int_0^1 \log f d\mu =: C^\ast,$$
where 
\begin{equation}\label{eq:f}
f(x):=\begin{cases}
\frac{1}{(Tx)^2},&\text{if }x\in(0,\frac12]\cap T^{-1}(0,\frac12],\vspace{0.5ex}\\
\frac{2}{(1-Tx)^2},&\text{if }x\in(0,\frac12]\cap T^{-1}(\frac12,1),\vspace{0.5ex}\\
\frac{(1+Tx)^2}{(1-Tx)^2},&\text{if }x\in(\frac12,1)\cap T^{-1}(\frac12,1),\vspace{0.5ex}\\
\frac{(1+Tx)^2}{2(Tx)^2},&\text{if }x\in(\frac12,1)\cap T^{-1}(0,\frac12].\end{cases}
\end{equation}
The constant $C^*$ is approximately $3.72805$.
\end{thm}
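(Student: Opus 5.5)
The argument has three parts: a pointwise formula for the excursion time, Birkhoff's theorem for $T$, and a comparison of $T_N$ with $2\log Q_N$. Throughout, let $\tilde\gamma=\tilde\gamma^v$, $x_n=T^{n-1}x$, and $y_n$ the second coordinate of $\overline{T}^{\,n-1}(x,y)$, where $(x,y)$ is the $\Omega$-projection of $J(\tilde\gamma)$.

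\textbf{Step 1: formula for $r_1$.} By Theorem~\ref{th:exc}, the $n$-th excursion segment of $\gamma^v$ is $c(\rho^{n-1}\tilde\gamma)$, and by Theorem~\ref{th:com2}, $J\rho^{n-1}\tilde\gamma=\widetilde T^{\,n-1}J\tilde\gamma$. Hence $r_n=r_1(\widetilde T^{\,n-1}(x,y,j))$, and it suffices to compute $r_1(x,y,1)$.

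\begin{enumerate}
\item The segment $c(\gamma)$ runs from $\xi_\gamma\in\ell(1,\infty)$ to $\eta_\gamma\in\mathcal C_{(k,\veps)_s}$.
\item Apply the element $g$ from the proof of Theorem~\ref{th:exc}. It carries $\eta_\gamma$ to the point where $\rho(\gamma)$ meets $\ell(-\veps,\infty)$.
\item So $r_1$ is the hyperbolic distance, along one geodesic, between a point on a vertical line and a point on a geodesic of a different horocyclic type.
\end{enumerate}

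For $s=e$, the distance between the points of a geodesic lying over $\mathrm{Re}=1$ and over $\mathrm{Re}=-1$ (after translation) is an explicit $\log$ of cross-ratios in $(\gamma_\infty,\gamma_{-\infty})$. This should give
\[
r_1(x,y)=\log f(x)+\psi(x,y)-\psi(\overline{T}(x,y)),
\]
with $\psi$ bounded and measurable. For the Gauss map the analogous coboundary is $\log\frac{1+xy}{\cdots}$-type. The four cases of $f$ reflect whether the exit edge is a vertical line or the arc $\ell(1,2)$-type side: the factor $2$ and the $(1\pm Tx)$ terms come from conjugating by $\iota,\widetilde\iota$ when the next digit is of type $o$. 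I will derive the $s=o$ cases from the $s=e$ ones via the involutions \eqref{eq:transf}, which swap the two cusps and are isometries.

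\textbf{Step 2: ergodic average.} Summing the formula from Step 1, the coboundary telescopes, so
\[
T_N=\sum_{n=1}^N \log f(x_n)+O(1).
\]
The measure $\mu$ is $T$-invariant (Corollary~\ref{abscontinv}), and $T$ is ergodic. I expect ergodicity to follow from the Rényi/Markov property with bounded distortion on the full branches, or else from the ergodicity of the geodesic flow through Theorem~\ref{th:com2}. Birkhoff's theorem then gives $T_N/N\to\int\log f\,d\mu$, once integrability is checked. Since $f(x)\asymp a_1^2$, we have $\log f\in L^1(\mu)$, because $\mu(a_1=k)\asymp k^{-2}$.

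\textbf{Step 3: comparison with $Q_N$.} From the composition \eqref{eq:inv_br2}, $\bigl|x-\tfrac{P_N}{Q_N}\bigr|$ and the derivative $|(h_{a_1}\circ\cdots\circ h_{a_N})'|$ are comparable to $Q_N^{-2}$. Here $Q_N$ is normalized so that the matrices have determinant $\pm1$, and when some $s_n=o$ one uses $\widehat Q$ to control the normalization. Also
\[
\bigl|(T^N)'(x)\bigr|=\prod_{n=1}^N |T'(x_n)|,
\]
and $\log|T'(x_n)|$ differs from $\log f(x_n)$ by a coboundary plus a bounded term. I would check this case by case: for $x\le\frac12$ with $Tx\le\frac12$, $|T'(x)|=x^{-2}$, while $f(x)=(Tx)^{-2}$. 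Therefore
\[
2\log Q_N=\sum_{n=1}^N\log f(x_n)+O(1),
\]
and the second equality of the theorem follows. The numerical value $C^*\approx 3.72805$ comes from evaluating the integral with the explicit density.

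\textbf{Main obstacle.} The hard step is Step 1: getting an exact coboundary decomposition of $r_1$ with a bounded $\psi$ in all four type transitions, uniformly as $y$ approaches the ends $\sqrt3-2$ and $\sqrt3$. I would first do the $e\to e$ case by direct computation, and then transport it to the other cases using $\iota$.
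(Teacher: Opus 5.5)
Your plan depends on coboundaries with \emph{bounded} transfer functions in two places: in Step~1 ($r_1=\log f+\psi-\psi\circ\overline{T}$) and in Step~3 ($\log|T'|$ versus $\log f$). Neither exists. The ``main obstacle'' you single out is therefore a false target, not a technical difficulty. Consequently the identities $T_N=\sum_{n\le N}\log f(x_n)+O(1)$ and $2\log Q_N=\sum_{n\le N}\log f(x_n)+O(1)$ both fail along almost every orbit.

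To see this, note that $f(x)$ depends only on $Tx$ and on the type of $x$. Hence $\log f(x)=2\log a_2(x)+O(1)$, not $2\log a_1$ as you wrote. On the other hand, for $s_1=e$ and $y=-\gamma_{-\infty}$, Proposition~\ref{pr:r_n} reads
\[
2r_1=\log\Bigl[\frac{1/x-1}{Tx\,(1-Tx)}\cdot\frac{(2a_1+y)(2a_1+\veps_1+y)}{1+y}\Bigr],
\]
so $r_1=\frac32\log a_1+\frac12\log a_2+O(1)$ uniformly in $y$. It follows that $r_1-\log f=\frac32(\log a_1-\log a_2)+O(1)$ is unbounded, for example on $\{Tx\in[\frac13,\frac12]\}$ as $a_1\to\infty$. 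Any transfer function must therefore grow like $\frac32\log a_1$. Indeed, when $x$ and $Tx$ are both of type $e$, one checks that $\psi(x,y)=\frac12\log(\frac1x-1)+\log\frac1x-\frac12\log(1+y)+\log(1+xy)$ satisfies $r_1-\log f=\psi-\psi\circ\overline{T}$. The unboundedness comes from large digits, not from $y$ near $\sqrt3-2$ or $\sqrt3$, where $1+y$ and $1+xy$ stay above $\sqrt3-1$. Your own case check in Step~3 shows the same phenomenon: $\log|T'(x)|-\log f(x)=-2\log x+2\log Tx$, so the transfer function is $-2\log x$. The resulting boundary terms are of size $\log a_1+\log a_{N+1}$, and $a_{N+1}$ is unbounded for a.e.\ $x$.

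The repair is to ask only for \emph{integrable} transfer functions. The $\psi$ above, its analogues in the other type regimes (built from $\log(\alpha_1^*-1)$, $\log\alpha_1^*$ and bounded terms), and $2\log\alpha_1^*$ in Step~3 all lie in $L^1$, because $\log\frac1x$ and $\log\frac1{1-x}$ are integrable. For $\psi\in L^1(\bar\mu)$ one has $\psi\circ\overline{T}^N/N\to0$ a.e., since it is a difference of consecutive Birkhoff averages. So the boundary terms are $o(N)$ a.e., which is all the limits need. Moreover, the $y$-dependent part of $\psi$ is bounded, so Birkhoff for $T$ alone suffices and gives the result for a.e.\ $x$ and every $y$.

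This is essentially what the paper does. It splits $2r_n=\log L_{\alpha,n}+\log L_{\beta,n}$ and writes $\log L_{\alpha,n}=\log f(x_n)+\phi(x_n)-\phi(x_{n+1})$ with the unbounded but integrable $\phi=\log(\alpha_1^*-1)$ (Lemma~\ref{le:f}). It then relates $\prod L_{\beta,n}$ to $Q_N^2$ and $\prod L_{\alpha,n}$ to $R_N^2$ through the convergent matrices and their $\iota$-conjugates. Finally, it absorbs the $\log a_{N+1}$-size gap between $Q_N$ and $Q_{N+1}$ with the sandwich $R_{N-1}\le Q_N\le R_N$.
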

\begin{proof} Let us first explain the proof up to Proposition~\ref{pr:r_n} and Lemmata~\ref{le:inc}-\ref{le:2}.

Define $(\alpha_n, \beta_n) = \pm ( \rho^{n-1}(\gamma)_\infty, \rho^{n-1}(\gamma)_{-\infty}),$ where $\pm$ is the sign of $\rho^{n-1}(\gamma)_\infty.$ 
Let us omit $\gamma$ for a while. In Proposition~\ref{pr:r_n}, we prove that
\begin{equation*}
    r_n = \begin{cases}
        \frac{1}{2} \log L(\alpha_n, \beta_n) & {\rm if }s_n =e,\\
        \frac{1}{2} \log L(\widetilde\iota\alpha_n, \iota\beta_n) &{\rm if }s_n =o
    \end{cases}
\end{equation*}
where $L(\alpha, \beta)$ is a function of $\alpha, \beta$.
For brevity, let 
\begin{equation}\label{eq:alphastar}
(\alpha_{n}^{\ast},\beta_{n}^{\ast}) = \begin{cases}(\alpha_{n},\beta_{n})&\text{ if }s_n=e,\\
(\tilde{\iota}(\alpha_n),\iota(\beta_n))&\text{ if }s_n=o.\end{cases}
\end{equation}
The trick is that 
$$ L(\alpha_n^*, \beta_n^*) =L_{\alpha, n} L_{\beta,n}, $$
where $L_{\alpha, n}$ is a rational function depending only on $\alpha_n$, and $L_{\beta,n}$ is a rational function depending only on $\beta_n$ and the first SCF digit of $\alpha_n.$
We show that, for almost every $x\in[0,1]$,
\begin{enumerate}
\item (Lemma~\ref{le:f}) we have 
$$ \lim_{N\to \infty}\frac{\log \prod_{n=1}^NL_{\alpha,n}
}{N} = \int_0^1\log fd\mu<\infty,$$
\item (Lemmas \ref{le:p/q.star} and \ref{le:1}) $\prod_{n=1}^N L_{\alpha,n} \asymp R_N^2$, where
\begin{equation}\label{eq:R}R_N:=
\begin{cases}Q_{N+1}&\text{if }s_N=s_{N+1},\\
Q_N&\text{if }s_N\not=s_{N+1},
\end{cases}\end{equation}
\item (Lemmas \ref{le:p/q.star} and \ref{le:2}) $\prod_{n=1}^N L_{\beta,n} \asymp Q_N^2$.
\end{enumerate}

Since $(\log R_N)/N$ has a limit by (1), and $Q_N\uparrow \infty$ by Lemma~\ref{le:inc}, we obtain
$$\lim_{N\to\infty}\frac{\log Q_N}{N} = \lim_{N\to\infty}\frac{\log R_N}{N}.$$

Combining (1), (2), and (3), we deduce that
$$\lim_{N\to \infty}\frac{\log \prod_{n=1}^NL_{\beta,n}}{N} =\lim_{N\to\infty}\frac{2\log Q_N}{N}= \lim_{N\to \infty}\frac{\log \prod_{n=1}^NL_{\alpha,n}}{N} = \int_0^1\log fd\mu$$
for almost all $x\in(0,1)$.
Therefore, 
\begin{align*}
\lim_{N\to\infty}\frac{\sum_{n=1}^N r_n}{N} 
& = \lim_{N\to\infty}\frac{\frac{1}{2}\sum_{n=1}^N \log L_{\alpha,n}L_{\beta,n}}{N}
 = \lim_{N\to\infty}\frac{2\log(Q_N)}{N}=\int_0^1\log fd\mu
\end{align*}
for almost all $x\in(0,1)$.
\end{proof}

We now begin the proof of the Lemmas mentioned in the proof above.
In Proposition~\ref{pr:r_n}, we express $r_n$ in terms of the endpoints of $\rho^{n-1}(\tilde{\gamma}^v)$.
The expression takes a different form depending on whether $s_n=e$ or $s_n=o$.
If $s_n=e$, then the hyperbolic length of the $n$-th excursion segment can be obtained by direct calculation.
On the other hand, if $s_n=o$, then the segment is first mapped via $\iota$ when $\rho^{n-1}(\tilde{\gamma}^v)_\infty<0$ or $\widetilde{\iota}$ when $\rho^{n-1}(\tilde{\gamma}^v)_\infty>0$,
and the length is then computed after this transformation.

\begin{defn}
We define a substitution $\phi:\mathscr A\to \mathscr A$ by
$$\begin{cases} (1,1)_e \mapsto (1,1)_e,\\
(k,\varepsilon)_e \mapsto (k,\varepsilon)_{o} \text{ when }(k,\varepsilon)\ne (1,1),\\
(k,\varepsilon)_{o} \mapsto (k,\varepsilon)_e.
\end{cases}$$
\end{defn}
The involution $\iota(x)=\frac{1-x}{1+x}$ replaces each digit of the SCF and the dual SCF with its image under the substitution $\phi$.

\begin{lem}\label{le:iota}
For $x = [0;(a_1,\varepsilon_1)_{s_1},(a_2,\varepsilon_2)_{s_2},\dots]$ and $y = \langle(b_1,\eta_1)_{t_1},(b_2,\eta_2)_{t_2},\dots\rangle$, we have
$$\iota(x) = [0;\phi((a_1,\varepsilon_1)_{s_1}), \phi((a_2,\varepsilon_2)_{s_2}),\dots]\qquad\text{ and }\qquad\iota(y) = \langle \phi((b_1,\eta_1)_{t_1}),\phi((b_2,\eta_2)_{t_2}),\dots\rangle.$$
In particular, it follows that for $x=[0;(a_1,\varepsilon_1)_{s_1},(a_2,\varepsilon_2)_{s_2},\dots]$,
$$\frac{\widehat{P}_n(x)}{\widehat{Q}_n(x)} = h_{\phi(a_1,\veps_1)_{s_1}}\circ h_{\phi(a_2,\veps_2)_{s_2}}\circ\cdots\circ h_{\phi(a_n,\veps_n)_{s_n}}(0).$$ 
\end{lem}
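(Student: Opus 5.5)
The plan is to show that $\iota$ conjugates each inverse branch to the inverse branch of the substituted letter, and then to pass this conjugacy through compositions and limits. Concretely, the first step is to verify, for every letter $d=(k,\veps)_s\in\mathscr A$, the identities
\begin{equation*}
\iota\circ h_{d}\circ\iota = h_{\phi(d)}\quad\text{on }[0,1],\qquad \iota\circ \bar h_{d}\circ\iota = \bar h_{\phi(d)}\quad\text{on }[\sqrt3-2,\sqrt3].
\end{equation*}
Since $\iota$ and $\phi$ are involutions, it suffices to check the cases $s=e$ and $(1,1)_e$. Each is a direct computation with M\"obius maps.

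For instance, for $d=(k,-1)_e$ one computes
\[
h_{(k,-1)_e}(\iota x)=\frac{1+x}{(2k-1)+(2k+1)x},
\qquad
\iota\bigl(h_{(k,-1)_e}(\iota x)\bigr)=\frac{(k-1)+kx}{k+(k+1)x}.
\]
Expanding $h_{(k,-1)_o}(x)=\bigl(1+\frac{1}{k-\frac{1}{1+x}}\bigr)^{-1}$ gives the same expression. The cases $(k,+1)_e$ and $(1,1)_e$, and the dual branches $\bar h$, are handled in the same way.

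Along the way I will record two facts needed to keep the domains consistent. First, $\iota$ swaps $\sqrt3-2$ and $\sqrt3$, so $\iota$ preserves $\mathbb I$. Second, $\iota$ swaps $0$ and $1$, which both correspond to the empty sequence in the paper's convention. Consequently $\iota$ maps each cylinder $I_{d}$ onto $I_{\phi(d)}$ (as the image of the whole interval under the conjugated branch), and likewise maps $\bar I_d$ onto $\bar I_{\phi(d)}$, up to endpoints.

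The second step is to pass to expansions. By induction on $n$, the branch identities give
\[
\iota\circ h_{d_1}\circ\cdots\circ h_{d_n}=h_{\phi(d_1)}\circ\cdots\circ h_{\phi(d_n)}\circ\iota .
\]
If $x\in I_{d_1}\cap T^{-1}I_{d_2}\cap\cdots$, then by \eqref{eq:inv_br2} we have $x=h_{d_1}\circ\cdots\circ h_{d_n}(T^nx)$. Hence $\iota(x)=h_{\phi(d_1)}\circ\cdots\circ h_{\phi(d_n)}(\iota T^n x)$, so $\iota(x)$ lies in the $n$-cylinder with digits $\phi(d_1),\dots,\phi(d_n)$. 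Equivalently, $T\circ\iota=\iota\circ T$ off the branch endpoints, and the digit map satisfies $A(\iota x)=\phi(A(x))$.

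Since this holds for every $n$, uniqueness of the SCF expansion for irrational $x$ gives the first formula. The dual statement follows in the same way: apply the $\bar h$ identity to $y=\lim_n \bar h_{d_1}\circ\cdots\circ \bar h_{d_n}(0)$, and use the continuity of $\iota$ together with the fact that cylinder lengths shrink to $0$ (as in the proof of the lemma identifying $\mathbb I$) to replace the base point $\iota(0)=1$ by $0$ in the limit.

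The final claim about $\widehat P_n/\widehat Q_n$ is then immediate from the definitions. Indeed, $\widehat P_n(x)/\widehat Q_n(x)=P_n(\iota x)/Q_n(\iota x)$ is by definition the composition of the inverse branches of the SCF digits of $\iota(x)$ evaluated at $0$. By the first part, those digits are exactly $\phi((a_i,\veps_i)_{s_i})$.

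The only real obstacle is bookkeeping. One must get the signs $\veps$, $\overline{\veps}$ right in the odd branches, and handle the boundary issues: endpoints of cylinders, rational $x$, and the exceptional letter $(1,1)_e$, whose image under $\iota$ is $I_{(1,1)_e}=[1/3,1/2]$ with $\iota$ fixing it as a set. These boundary issues only concern a null or countable set, which I would dispose of by restricting to irrationals.
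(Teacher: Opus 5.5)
Your proposal is correct and follows the paper's route. The paper's proof simply asserts $\iota T=T\iota$ and $\iota\widehat T=\widehat T\iota$, so that $\iota$ carries $I_d$ to $I_{\phi(d)}$ and $\bar{I}_d$ to $\bar{I}_{\phi(d)}$; your explicit check of the branch conjugacies $\iota\circ h_d\circ\iota=h_{\phi(d)}$ and $\iota\circ\bar{h}_d\circ\iota=\bar{h}_{\phi(d)}$ (which do hold) is exactly the verification behind those assertions.

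One small adjustment concerns the exceptional set on the dual side. There the points with non-unique expansions are the cylinder endpoints, and these lie in $\mathbb{Q}(\sqrt3)$ rather than $\mathbb{Q}$: for example, $1/\sqrt3=\bar{h}_{(1,1)_e}(\sqrt3-2)=\bar{h}_{(2,1)_o}(\sqrt3)$ has two limit representations. So when you invoke uniqueness of the dual expansion, the countable set you must discard is this set of endpoints, not the rationals.
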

\begin{proof}
Since $\iota \CF = \CF \iota$,
we have $x\in I_{(a,\varepsilon)_{s}}$ if and only if $\iota(x)\in I_{\phi((a,\varepsilon)_{s})}$.
Moreover, $T^{i-1}x\in I_{(a_i,\varepsilon_i)_{s_i}}$ implies $T^{i-1}(\iota x)=\iota(T^{i-1}x)\in I_{\phi((a_i,\varepsilon_i)_{s_i})}$.
Similarly, we have $\iota \F = \F\iota$ and $y\in \bar{I}_{(b,\eta)_t}$ if and only if $\iota(y)\in \bar{I}_{\phi((b,\eta)_t)}$.
\end{proof}

For the pair of endpoints $(\gamma_\infty,\gamma_{-\infty})\in \mathcal I$ of a geodesic $\gamma$ in $A$, let the corresponding pair of SCF and dual SCF expression be
$$\gamma_\infty=j_\gamma\cdot[(a_1,\varepsilon_1)_{s_1};(a_2,\varepsilon_2)_{s_2},\dots]\quad\text{and}\quad\gamma_{-\infty}=-j_\gamma\cdot\langle(a_0,\veps_0)_{s_0},(a_{-1},\veps_{-1})_{s_{-1}},\dots\rangle,$$
where $j_\gamma =\mathrm{sgn}(\gamma_\infty)$.
From the fact that $\tilde\iota$ is conjugate to $\iota$ via both $z\mapsto -z$ and $z\mapsto 1/z$, it follows that
\begin{equation}\label{eq:CFconv}
\begin{split}
 j_\gamma\cdot[\phi((a_1,\varepsilon_1)_{s_1}); \phi((a_2,\varepsilon_2)_{s_2}),\dots]
& =\begin{cases}\tilde\iota(\gamma_\infty)&\text{if }j_\gamma=1,\\
\iota(\gamma_\infty)&\text{if }j_\gamma=-1,\end{cases}\\
 -j_\gamma\cdot\langle \phi((a_0,\veps_0)_{s_0}),\phi((a_{-1},\veps_{-1})_{s_{-1}}),\dots\rangle
& =\begin{cases}\tilde\iota(\gamma_{-\infty})&\text{if }j_\gamma=1,\\
\iota(\gamma_{-\infty})&\text{if }j_\gamma=-1.\end{cases}
\end{split}
\end{equation}

%For given $\gamma\in A$,
In Section~\ref{sec:cod}, we observed that the first return map $\Phi$ of the geodesic flow corresponds to the two-sided shift on $\mathscr A^\mathbb Z$, and $\Phi$ is expressed explicitly via $\rho$.
Now we consider $\{\gamma^{(n)}=\rho^{n-1}(\gamma)\}$, which is the $\rho$-orbit of $\gamma$.
Equivalently, we consider the $\Phi$-orbit $\{(\gamma^{(n)}_{\infty},\gamma^{(n)}_{-\infty})\}$ of $(\gamma_\infty,\gamma_{-\infty})\in\mathcal I$.
From \eqref{eq:rho.shift}, we have, for $n\ge 1$,
$$\gamma_\infty^{(n)}=(-1)^{n-1}\veps_1\veps_2\cdots\veps_{n-1}j_\gamma\cdot[(a_n,\veps_n)_{s_n};(a_{n+1},\veps_{n+1})_{s_{n+1}},(a_{n+2},\veps_{n+2})_{s_{n+2}},\dots],$$
$$\gamma_{-\infty}^{(n)} = (-1)^n\veps_1\veps_2\cdots \veps_{n-1}j_\gamma\cdot\langle (a_{n-1},\veps_{n-1})_{s_{n-1}},(a_{n-2},\veps_{n-2})_{s_{n-2}},(a_{n-3},\veps_{n-3})_{s_{n-3}},\dots\rangle.$$

For brevity, we define $$\alpha_n(\gamma): = |\gamma_\infty^{(\infty)}|\in(1,\infty)\qquad\text{and}\qquad \beta_n(\gamma):=-\mathrm{sgn}(\gamma_{\infty}^{(n)})\cdot\gamma_{-\infty}^{(n)}\in(\sqrt{3}-2,\sqrt{3}].$$
Equivalently, 
\begin{equation}\label{eq:albe1}
\begin{split}
\alpha_n &= [(a_n,\veps_n)_{s_n};(a_{n+1},\veps_{n+1})_{s_{n+1}},\dots],\\
\beta_n &= \langle (a_{n-1},\veps_{n-1})_{s_{n-1}},(a_{n-2},\veps_{n-2})_{s_{n-2}},\dots\rangle.
\end{split}
\end{equation}
By combining Lemma~\ref{le:iota} and Eq.~\eqref{eq:CFconv}, we describe the image of the substitution $\phi$ as
\begin{equation}\label{eq:albe2}
\begin{split}
\tilde\iota(\alpha_n) & = [\phi((a_n,\veps_n)_{s_n});\phi((a_{n+1},\veps_{n+1})_{s_{n+1}}),\dots],\\
\iota(\beta_n) & =  \langle \phi((a_{n-1},\veps_{n-1})_{s_{n-1}}),\phi((a_{n-2},\veps_{n-2})_{s_{n-2}}),\dots\rangle.
\end{split}
\end{equation}
Using \eqref{eq:albe1} and \eqref{eq:albe2}, we express the $n$-th excursion time $r_n$ of $\gamma$ in terms of $\alpha_n^*$ and $\beta_n^*$.

\begin{prop}\label{pr:r_n}
The $n$-th excursion time of $\gamma$ is
$$r_n = \frac{1}{2}\log L(\alpha_n^*,\beta_n^*),$$
where $(\alpha_n^*,\beta_n^*)$ is defined as in \eqref{eq:alphastar},
$$L(\alpha,\beta) = \frac{\alpha-1}{-(\alpha-2a_1)(\alpha-(2a_1+\varepsilon_1))}\cdot \frac{(\beta+2a_1)(\beta+(2a_1+\varepsilon_1))}{\beta+1},$$
and $(a_1,\veps_1)_{s_1}$ is the first SCF digit of $\alpha$.
\end{prop}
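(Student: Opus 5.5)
The plan is to compute the hyperbolic length of the excursion segment $c(\gamma^{(n)})$ directly, using the standard formula for the distance between two points on a geodesic cut out by two other geodesics. First I reduce to $n=1$ and $j_\gamma=1$. Since $\rho$ is realised by elements of $\Theta$, the $n$-th excursion segment of $\gamma$ is the first excursion segment of $\gamma^{(n)}=\rho^{n-1}(\gamma)$. Applying $z\mapsto -\bar z$ if necessary (an isometry preserving $\mathcal T$ and exchanging $\pm\mathcal C_{(k,\veps)_s}$ and $\ell(\pm1,\infty)$), I may take the geodesic with endpoints $\alpha=\alpha_n>1$ and $-\beta$, where $\beta=\beta_n$. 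The segment then runs from $\xi=\ell(1,\infty)\cap\gamma$ to $\eta=\mathcal C_{(a_1,\veps_1)_{s_1}}\cap\gamma$.

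\textbf{Case $s_1=e$.} Here $\mathcal C$ is the semicircle with endpoints $2a_1$ and $2a_1+\veps_1$. For a geodesic with endpoints $\alpha,\alpha'$ meeting geodesics with endpoints $\{p,q\}$ and $\{p',q'\}$, the distance between the intersection points satisfies $e^{2d}$ equal to a product of cross-ratios. Concretely, the intersection point with $\ell(p,q)$ has "height parameter" $t$ along $\gamma$, where
\[
t^2=\frac{(p-\alpha)(q-\alpha)}{(p-\alpha')(q-\alpha')}
\]
up to sign. This follows by normalising $\gamma$ to the imaginary axis via $z\mapsto (z-\alpha)/(z-\alpha')$: a geodesic symmetric about that axis meets it at height $\sqrt{-\tilde p\tilde q}$. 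So $e^{2r}$ is the ratio of these quantities for $\{1,\infty\}$ and for $\{2a_1,2a_1+\veps_1\}$, with $\alpha'=-\beta$. This gives
\[
e^{2r}=\frac{\alpha-1}{-(\alpha-2a_1)(\alpha-2a_1-\veps_1)}\cdot\frac{(\beta+2a_1)(\beta+2a_1+\veps_1)}{\beta+1},
\]
which is exactly $L(\alpha,\beta)$. The factors coming from the point $\infty$ cancel. I will then check the signs: since $\alpha$ lies strictly between $2a_1$ and $2a_1+\veps_1$, the first factor is positive, and $\beta+1>0$ because $\beta>\sqrt3-2>-1$.

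\textbf{Case $s_1=o$.} I will apply $\tilde\iota$, an orientation-reversing isometry (a reflection). By Lemma~\ref{le:iota} and \eqref{eq:CFconv}, it sends the endpoints $(\alpha,-\beta)$ to $(\tilde\iota\alpha,-\iota\beta)$. I need to verify that $\tilde\iota$ maps $\ell(1,\infty)$ to itself and maps $\mathcal C_{(k,\veps)_o}$ to $\mathcal C_{\phi((k,\veps)_o)}=\mathcal C_{(k,\veps)_e}$. The first is immediate from $\tilde\iota(1)=\infty$. The second comes from evaluating $\tilde\iota$ on the endpoints $\frac{k+1}{k},\frac{2k+1}{2k-1},\frac{k}{k-1}$, which gives $2k+1,\,2k,\,2k-1$. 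Since lengths are preserved, $r=\frac12 L(\tilde\iota\alpha,\iota\beta)$, where the digit used in $L$ is the first digit of $\tilde\iota\alpha$, namely $\phi((a_1,\veps_1)_o)$. This is precisely the statement with $(\alpha^*,\beta^*)$.

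\textbf{Main obstacle.} The hardest part is the bookkeeping rather than any estimate. I must confirm that the reduction $z\mapsto-\bar z$ in the case $j=-1$, and the choice of $\iota$ versus $\tilde\iota$ according to the sign, consistently turn the $o$-type picture into the normalised $e$-type picture with backward endpoint $-\iota\beta$. I also have to get the sign conventions in the cross-ratio right so that both factors of $L$ come out positive. I would check this on the explicit example $[0;\overline{(2,-1)_e,(3,1)_o}]$ as a sanity test.
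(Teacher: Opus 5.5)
Your proof is correct and follows the paper's route: you reduce to $j_\gamma=1$ and $s=e$, obtain the length from a cross-ratio computation, and transport the $o$-case to the $e$-case via $\tilde\iota$. The differences are minor. You compute the distance by normalising $\gamma$ to the imaginary axis and comparing intersection heights, rather than solving for $\mathrm{Re}\,\eta$ as the paper does; note that the height $\sqrt{-\tilde p\tilde q}$ holds for any crossing geodesic, not only a symmetric one. You also check explicitly that $\tilde\iota$ fixes $\ell(1,\infty)$, sends $\mathcal C_{(k,\veps)_o}$ to $\mathcal C_{(k,\veps)_e}$, and maps $-\beta$ to $-\iota\beta$, which the paper leaves implicit.
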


\begin{proof}
Without loss of generality, assume $j_\gamma=1$ and $s_1=e$.
Let $\xi$ be the point where $\gamma$ intersects the vertical line $\mathrm{Re}(z)=1$, and $\eta$ be the point on which $\gamma$ intersects the arc connecting $a_1$ and $a_1+\varepsilon_1$, see Figure~\ref{fi:exc}.
Using the relation between the hyperbolic distance and the cross-ratio, 
$$r_1=d_{\bH}(\xi,\eta)=\log \frac{|\gamma_\infty-\xi|\cdot |\eta-\gamma_{-\infty}|}{|\gamma_\infty-\eta|\cdot |\xi-\gamma_{-\infty}|}.$$
 Since $\frac{|\gamma_{\infty}-\xi|}{|\xi-\gamma_{-\infty}|}=\sqrt{\frac{|\gamma_{\infty}-1|}{|1-\gamma_{-\infty}|}}$ and $\frac{|\eta-\gamma_{-\infty}|}{|\gamma_{\infty}-\eta|} = \sqrt{\frac{|\mathrm{Re}\,\eta-\gamma_{-\infty}|}{|\gamma_\infty-\mathrm{Re}\,\eta|}}$, we have
$$d_{\bH}(\xi,\eta)=\frac{1}{2}\log \frac{|\gamma_\infty-1|\cdot |\mathrm{Re}\,\eta-\gamma_{-\infty}|}{|1-\gamma_{-\infty}|\cdot |\gamma_{\infty}-\mathrm{Re}\,\eta|}.$$
Since $\eta$ is the intersection of two half circles
$$\left(\mathrm{Re}(z)-\frac{4a_1+\varepsilon_1}{2}\right)^2+\mathrm{Im}(z)^2=\frac{1}{4}\quad\text{ and }\quad\left(\mathrm{Re}(z)-\frac{\gamma_\infty+\gamma_{-\infty}}{2}\right)^2+\mathrm{Im}(z)^2=\left(\frac{\gamma_\infty-\gamma_{-\infty}}{2}\right)^2,$$
the real part of $\eta$ is
$$\mathrm{Re}\, \eta = \frac{\gamma_\infty\gamma_{-\infty}-2a_1(2a_1+\varepsilon_1)}{(\gamma_\infty+\gamma_{-\infty})-(4a_1+\varepsilon_1)}.$$
Substituting the expression for $\mathrm{Re}\;\eta$ into the above ratio, we obtain
\begin{align*}
\frac{|\gamma_\infty-1||\mathrm{Re}\,\eta-\gamma_{-\infty}|}{|1-\gamma_{-\infty}||\gamma_\infty-\mathrm{Re}\,\eta|} 
 = \frac{\gamma_\infty-1}{-(\gamma_\infty-2a_1)(\gamma_\infty-(2a_1+\varepsilon_1))}\cdot \frac{(\gamma_{-\infty}-2a_1)(\gamma_{-\infty}-(2a_1+\varepsilon_1))}{1-\gamma_{-\infty}}.
\end{align*}
Thus, if $s_n=e$, then $r_n=\frac{1}{2}\log L(\alpha_n,\beta_{n})$.

When $s_n=o$, we calculate $r_n$ after sending $\gamma$ to $\widetilde{\iota}(\gamma)$ or $\iota(\gamma)$ as described in \eqref{eq:transf}.
By \eqref{eq:albe2}, if $s_n=o$, then $r_n = \frac{1}{2}\log L(\tilde\iota(\alpha_n),\iota(\beta_{n}))$.
\end{proof}

Let 
$$x=x_1=1/\alpha_1 = [0;(a_1,\varepsilon_1)_{s_1},(a_2,\varepsilon_2)_{s_2},\dots]$$
and $x_n=1/\alpha_n$ for $n\ge 1$.
Thus, we have
$$x_n = 1/\alpha_n = [0;(a_n,\veps_n)_{s_n},(a_{n+1},\veps_{n+1})_{s_{n+1}},\dots].$$

Next, we consider a matrix formula associated with the convergents of the SCF expansion of $x$.
Each matrix $M=\left(\begin{smallmatrix}a&b\\c&d\end{smallmatrix}\right)$ acts on $\mathbb H^2$ by a linear fractional map or its complex conjugate as
$$M.z=\frac{az+b}{cz+d}\quad\text{ if }\det(M)=1,\qquad\text{ and }\qquad M.z=\frac{a\overline{z}+b}{c\overline{z}+d}\quad\text{ if }\det(M)=-1.$$
Let $M_{(a,\veps)_s}$ denote the matrix associated with the inverse branch $h_{(a,\veps)_s}$, so that we have $M_{(a,\veps)_s}.x = h_{(a,\veps)_s}(x)$.
Set $\overline\veps := \max(0,\veps)$.
With this notation, the matrices take the form
$$M_{(a,\varepsilon)_e} = \begin{pmatrix}0&1\\ \varepsilon&2a\end{pmatrix},\quad M_{(a,\veps)_o} = \begin{pmatrix}0&1\\1&1\end{pmatrix}\begin{pmatrix}0&1\\ \veps &a-\overline\veps \end{pmatrix}\begin{pmatrix}0&1\\1&1\end{pmatrix} = \begin{pmatrix} a-\overline\veps & a-\overline\veps +\veps \\ a-\overline\veps +1 & a-\overline\veps+\veps+1\end{pmatrix}.$$

Let us denote by
$$\begin{cases}
M_n(x):=M_{(a_1,\varepsilon_1)_{s_1}}M_{(a_2,\varepsilon_2)_{s_2}}\cdots M_{(a_n,\varepsilon_n)_{s_n}},\\
\widehat{M}_n(x) := M_{\phi((a_1,\varepsilon_1)_{s_1})}M_{\phi((a_2,\varepsilon_2)_{s_2})}\cdots M_{\phi((a_n,\varepsilon_n)_{s_n})}.
\end{cases}$$
%\sh{Definition 3.12 not complete??}    
The numerator $P_n(x)$ ($\widehat{P}_n(x)$, resp.) and denominator $Q_n(x)$ ($\widehat{Q}_n(x)$, resp.) of a convergent are the (1,2)-entry and the (2,2)-entry of $M_n(x)$ ($\widehat{M}_n(x)$, resp.).
It implies that 
$$M_n(x).0 = P_n/Q_n\qquad \text{and} \qquad \widehat{M}_n(x).0=\widehat{P}_n/\widehat{Q}_n.$$

We write
$$M_n(x) =: \begin{pmatrix}U_n & P_n\\ V_n& Q_n\end{pmatrix}\qquad\text{and}\qquad
\widehat{M}_n(x) =: \begin{pmatrix}\widehat{U}_n & \widehat{P}_n\\ \widehat{V}_n& \widehat{Q}_n\end{pmatrix}.$$
If $s_n=e$, then $U_n=\veps_n P_{n-1}$ and $V_n = \veps_n Q_{n-1}$, if $s_n=o$, then $\widehat U_n=\veps_n \widehat P_{n-1}$ and $\widehat V_n = \veps_n \widehat Q_{n-1}$.

From \eqref{eq:inv_br2}, we have $x = M_n.x_{n+1}$ and $\iota(x) = \widehat M_n.\iota(x_{n+1})$.
Thus, we have \begin{equation}\label{eq:Mx}
x=\frac{\veps_nP_{n-1}x_{n+1}+P_n}{\veps_nQ_{n-1}x_{n+1}+Q_{n}}\quad\text{if }s_n=e,\qquad
\iota(x)=\frac{\veps_n\widehat P_{n-1}\iota (x_{n+1})+\widehat P_n}{\veps_n\widehat Q_{n-1}\iota (x_{n+1})+\widehat Q_{n}}\quad\text{if }s_n=o,
\end{equation}
\begin{equation}\label{eq:M^-1x}
x_{n+1} =  \frac{-\veps_n(Q_{n}x-P_{n})}{Q_{n-1}x-P_{n-1}}\quad\text{if }s_n=e,
\quad\text{and}\quad 
\iota(x_{n+1}) =  \frac{-\veps_n(\widehat Q_{n}\iota (x)-\widehat P_{n})}{\widehat Q_{n-1}\iota(x)-\widehat P_{n-1}}\quad\text{if }s_n=o.
\end{equation}
Moreover, since $M_{(a,\veps)_s}^t$ represents $\bar{h}_{(a,\veps)_s}$ as in \eqref{eq:inv.barT}, 
$$\beta_{n+1} = \bar{h}_{(a_{n},\veps_{n})_{s_{n}}}\circ \bar{h}_{(a_{n-1},\veps_{n-1})_{s_{n-1}}}\circ\cdots \bar{h}_{(a_{1},\veps_{1})_{s_{1}}}(\beta_1)= M_n^t.\beta_1.$$
Thus,
\begin{equation}\label{eq:Mtbeta}\beta_{n+1} = \frac{\varepsilon_n(P_{n-1}\beta_{1}+Q_{n-1})}{P_n\beta_{1}+Q_n}\quad\text{if }s_n=e,\qquad%\text{and}\quad
\iota(\beta_{n+1}) = \frac{\varepsilon_n(\widehat{P}_{n-1}\iota(\beta_{1})+\widehat{Q}_{n-1})}{\widehat{P}_n\iota(\beta_{1})+\widehat{Q}_n}\quad\text{if }s_n=o.\end{equation}

\begin{lem}\label{le:inc}
For all $x\in(0,1)$, the sequence $(Q_n)_{n\ge1}$ is strictly increasing.
\end{lem}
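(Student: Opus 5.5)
The plan is to read off a two-term recursion for the second row $(V_n,Q_n)$ of $M_n(x)$ and to prove, by induction on $n$, a simple positivity invariant that forces $Q_n>Q_{n-1}$ at every step. Since $M_n=M_{n-1}M_{(a_n,\veps_n)_{s_n}}$, the second row satisfies
$$(V_n,Q_n)=(V_{n-1},Q_{n-1})\,M_{(a_n,\veps_n)_{s_n}},$$
with initial value $M_0=I$, that is, $V_0=0$ and $Q_0=1$. There are two cases for the recursion. If $s_n=e$, the explicit form of $M_{(a,\veps)_e}$ gives
$$V_n=\veps_nQ_{n-1}\quad\text{and}\quad Q_n=V_{n-1}+2a_nQ_{n-1}.$$
If $s_n=o$, set $c:=a_n-\overline{\veps_n}$ and write $V=V_{n-1}$, $Q=Q_{n-1}$. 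The explicit form of $M_{(a,\veps)_o}$ gives
$$V_n=cV+(c+1)Q\quad\text{and}\quad Q_n=(c+\veps_n)V+(c+\veps_n+1)Q.$$
For the digits in $\mathscr A$ we have $a_n\ge1$ in the $e$-case. In the $o$-case $c\ge1$ and $c+\veps_n\ge1$: if $\veps_n=1$ then $c=a_n-1\ge1$, and if $\veps_n=-1$ then $c=a_n\ge2$.

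The invariant I would propagate is
$$(\mathrm{H}_n):\qquad Q_n>0\quad\text{and}\quad V_n+Q_n>0,$$
which holds for $n=0$. One tempting choice is $|V_n|\le Q_n$, but it is not preserved: in the $o$-case with $\veps_n=-1$ one gets $Q_n-V_n=\veps_n(V+Q)<0$. The weaker statement $V_n+Q_n>0$ is exactly what the increments need. Assuming $(\mathrm{H}_{n-1})$, the two cases go as follows.

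In the $e$-case,
$$Q_n-Q_{n-1}=(2a_n-1)Q+V\ge Q+V>0,$$
because $2a_n-1\ge1$ and $Q>0$. Moreover $V_n+Q_n=\veps_nQ_{n-1}+Q_n\ge Q_n-Q_{n-1}>0$.

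In the $o$-case,
$$Q_n-Q_{n-1}=(c+\veps_n)(V+Q)>0,$$
since $c+\veps_n\ge1$. Moreover
$$V_n+Q_n=(2c+\veps_n)(V+Q)+Q>0,$$
since $2c+\veps_n\ge1$. In both cases $Q_n>Q_{n-1}>0$, so $(\mathrm{H}_n)$ holds and $Q_n>Q_{n-1}$.

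Induction then gives $1=Q_0<Q_1<Q_2<\cdots$, which is the lemma; the argument is purely combinatorial in the digits and applies to every $x\in(0,1)$. For rational $x$ it applies for as long as the expansion is defined. The main obstacle is finding the right invariant, because the naive bound $|V_n|\le Q_n$ fails after an $o$-digit with $\veps_n=-1$. The one-sided condition $V_n+Q_n>0$ circumvents this, and the only remaining work is the routine check of the two displayed identities from the explicit matrices.
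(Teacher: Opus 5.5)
Your proof is correct and is essentially the paper's argument: the same second-row recursions for $(V_n,Q_n)$ and the same invariant $V_n+Q_n>0$ (the paper tracks $V_n+Q_n\ge 3$), giving $Q_n-Q_{n-1}\ge Q_{n-1}+V_{n-1}>0$. One harmless slip: in the $o$-case, $V_n+Q_n=(2c+\varepsilon_n)(V+Q)+2Q$ rather than $(2c+\varepsilon_n)(V+Q)+Q$, which still gives positivity.
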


\begin{proof}
If $s_1=e$, then $Q_1+V_1=2a_1+\veps_1\ge 3$.
If $s_1=o$, then $Q_1+V_1=2a_1+1\ge 3$.
For $n\ge 2$, 
$$\begin{cases}Q_n = 2a_n Q_{n-1}+V_{n-1}, \;\; V_n = \veps_nQ_{n-1},&\text{if }s_n=e,\\
Q_n = (a_n-\overline\veps_n+\veps_n)(Q_{n-1}+V_{n-1})+Q_{n-1},\;\; V_n = (a_n-\overline\veps_n)(Q_{n-1}+V_{n-1})+Q_{n-1}& \text{if }s_n=o.
\end{cases}$$
In either case, $Q_n+V_n>Q_{n-1}+V_{n-1}$, and hence $Q_n+V_n\ge 3$ for all $n$ by induction.
Moreover,
$$Q_n-Q_{n-1}=
\begin{cases}(2a_n-1)Q_{n-1}+V_{n-1}& \text{if }s_n=e,\\
(a_n-\overline\veps_n+\veps_n)(Q_{n-1}+V_{n-1})& \text{if }s_n=o,
\end{cases}
$$
which implies that $Q_n-Q_{n-1}\ge Q_{n-1}+V_{n-1}>0$ in both cases.
\end{proof}

For two sequences $(A_N)$ and $(B_N)$, we write $A_N\asymp B_N$ if there exists a constant $C>1$ such that $1/C<A_N/B_N<C$ for all $N$.
In this case, 
$$\lim_{N\to \infty}\frac{\log A_N}{N}=\lim_{N\to\infty}\frac{\log B_N}{N}.$$

\begin{lem}\label{le:p/q.star}
If $s_j=e$, then we have
$$\begin{pmatrix}{\widehat{P}_j}\\{\widehat{Q}_j}\end{pmatrix} = \frac{1}{2}\begin{pmatrix}{\varepsilon_j(Q_{j-1}-P_{j-1})+(Q_{j}-P_{j})}\\{\varepsilon_j(Q_{j-1}+P_{j-1})+(Q_{j}+P_{j})}\end{pmatrix}.$$
If $s_j=o$, then we have
$$\begin{pmatrix}{P_j}\\{Q_j}\end{pmatrix} = \frac{1}{2}\begin{pmatrix}{\varepsilon_j(\widehat{Q}_{j-1}-\widehat{P}_{j-1})+(\widehat{Q}_{j}-\widehat{P}_{j})}\\{\varepsilon_j(\widehat{Q}_{j-1}+\widehat{P}_{j-1})+(\widehat{Q}_{j}+\widehat{P}_{j})}\end{pmatrix}.$$
Moreover, we have $Q_n \asymp \widehat{Q_n}.$
\end{lem}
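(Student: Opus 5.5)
The plan is to compare the two matrix products $M_n(x)$ and $\widehat M_n(x)$ through the matrix of the involution $\iota$. Write $K=\begin{pmatrix}-1&1\\1&1\end{pmatrix}$, so that $K.z=\frac{1-z}{1+z}=\iota(z)$ up to the conjugation convention for determinant $-1$ matrices; note $K^2=2I$. The first step is to check, letter by letter, that
\[
K\,M_{(a,\veps)_s}\,K^{-1} = \pm M_{\phi((a,\veps)_s)}
\]
for every $(a,\veps)_s\in\mathscr A$, possibly up to composing with a fixed diagonal sign matrix. This is a direct $2\times2$ computation from the explicit forms of $M_{(a,\veps)_e}$ and $M_{(a,\veps)_o}$, using $\begin{pmatrix}0&1\\1&1\end{pmatrix}$ and $K$. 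It is exactly the matrix version of $\iota T=T\iota$ and of Lemma~\ref{le:iota}.

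Telescoping then gives $K M_n K^{-1}=\pm\widehat M_n$, with signs or diagonal corrections tracked. This is not yet the formula in the statement: that formula expresses $(\widehat P_j,\widehat Q_j)$ through the columns $(P_{j-1},Q_{j-1})$ and $(P_j,Q_j)$. For this I will use the relation $U_j=\veps_jP_{j-1}$, $V_j=\veps_jQ_{j-1}$, which holds when $s_j=e$. Hence $M_j=\begin{pmatrix}\veps_jP_{j-1}&P_j\\ \veps_jQ_{j-1}&Q_j\end{pmatrix}$, and the second column of $\widehat M_j$ is obtained from $K M_j K^{-1}$. Since $K^{-1}=\tfrac12 K$, the second column of $M_jK^{-1}$ is $\tfrac12\bigl(\veps_j(P_{j-1},Q_{j-1})^t+(P_j,Q_j)^t\bigr)$. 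Applying $K$ on the left then gives the first coordinate $\tfrac12\bigl(\veps_j(Q_{j-1}-P_{j-1})+(Q_j-P_j)\bigr)$ and the second coordinate $\tfrac12\bigl(\veps_j(Q_{j-1}+P_{j-1})+(Q_j+P_j)\bigr)$, as claimed. The case $s_j=o$ is symmetric: swap the roles of hatted and unhatted quantities using $\widehat U_j=\veps_j\widehat P_{j-1}$ and $\widehat V_j=\veps_j\widehat Q_{j-1}$, since $\phi$ is an involution.

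The \textbf{main obstacle} is the sign bookkeeping. The matrices may have determinant $-1$, $K$ itself has determinant $-2$, and the conjugation $K M K^{-1}$ may land on $-M_{\phi(\cdot)}$ or differ by $\mathrm{diag}(1,-1)$. I would first verify on the single letters $(k,\pm1)_e$ and $(k,\pm1)_o$ which normalization makes the identity exact with positive $\widehat Q_j$. If a global sign appears, I would fix it by requiring $\widehat Q_j>0$, which holds by the analogue of Lemma~\ref{le:inc} applied to $\iota(x)$.

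For $Q_n\asymp\widehat Q_n$, take $s_n=e$. Then $2\widehat Q_n=(Q_n+P_n)+\veps_n(Q_{n-1}+P_{n-1})$. Since $0\le P_k/Q_k\le1$, the first bracket lies in $[Q_n,2Q_n]$ and the second is at most $2Q_{n-1}$. From the proof of Lemma~\ref{le:inc}, $Q_n\ge(2a_n-1)Q_{n-1}+V_{n-1}$, and I need a strict bound such as $Q_n\ge 3Q_{n-1}$ or $Q_n+P_n-(Q_{n-1}+P_{n-1})\ge cQ_n$ to absorb the negative case. This uses $a_n\ge2$ or $(a_n,\veps_n)=(1,1)$, the latter having positive sign, together with $|V_{n-1}|\le Q_{n-1}$. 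The result is $\widehat Q_n\asymp Q_n$ with uniform constants. The case $s_n=o$ is identical with the roles exchanged, giving $Q_n\asymp\widehat Q_n$.
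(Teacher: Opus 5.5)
For the two matrix identities your argument is the paper's: conjugate by $E=\begin{pmatrix}-1&1\\1&1\end{pmatrix}$, telescope, and read off the second column using $U_j=\varepsilon_jP_{j-1}$ and $V_j=\varepsilon_jQ_{j-1}$ for $s_j=e$. Your $KM_nK^{-1}$ is the same matrix as the paper's $E^{-1}M_nE$, since $K^2=2I$. The sign bookkeeping you single out as the main obstacle does not arise. A direct check gives $E^{-1}M_{(a,\varepsilon)_e}E=M_{(a,\varepsilon)_o}$ exactly: both are $\begin{pmatrix}a-1&a\\a&a+1\end{pmatrix}$ for $\varepsilon=1$ and $\begin{pmatrix}a&a-1\\a+1&a\end{pmatrix}$ for $\varepsilon=-1$, and $M_{(1,1)_e}$ is fixed. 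You should state this rather than plan to normalize a possible sign via $\widehat Q_j>0$, which would not by itself prove the identity as stated.

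For $Q_n\asymp\widehat Q_n$ your route differs from the paper's. The paper adds the two coordinates to get $\widehat P_j+\widehat Q_j=Q_j+\varepsilon_jQ_{j-1}$. It then bounds $\varepsilon_jQ_{j-1}/Q_j=M_j^t.0$ as a dual SCF value lying in $\bar I_{(a_j,\varepsilon_j)_e}\subset[\sqrt3-2,1/\sqrt3]$, which reuses the natural-extension structure and gives explicit constants. You bound $Q_{n-1}/Q_n$ from the forward recursion instead, which is more elementary, but two steps fail as written.

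\begin{itemize}
\item The claim $|V_{n-1}|\le Q_{n-1}$ is false. If $s_{n-1}=o$ and $\varepsilon_{n-1}=-1$, the recursion gives $V_{n-1}=Q_{n-1}+(Q_{n-2}+V_{n-2})>Q_{n-1}$; for example $M_{(2,-1)_o}=\begin{pmatrix}2&1\\3&2\end{pmatrix}$.
\item The inequality $Q_n\ge(2a_n-1)Q_{n-1}+V_{n-1}$, combined with $V_{n-1}\ge -Q_{n-1}$, yields only $Q_n\ge 2Q_{n-1}$. That leaves $Q_n-2Q_{n-1}$ merely nonnegative, not at least $cQ_n$.
\end{itemize}

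The repair is short. Use the exact recursion $Q_n=2a_nQ_{n-1}+V_{n-1}$ together with the one-sided bound $Q_{n-1}+V_{n-1}>0$ from the proof of Lemma~\ref{le:inc}. When $\varepsilon_n=-1$ you have $a_n\ge 2$, so $Q_n>(2a_n-1)Q_{n-1}\ge 3Q_{n-1}$. Hence $2\widehat Q_n\ge Q_n-2Q_{n-1}\ge Q_n/3$, using $P_n\ge 0$ and $P_{n-1}\le Q_{n-1}$. The case $\varepsilon_n=+1$ and the upper bound $\widehat Q_n\le 2Q_n$ are immediate. The case $s_n=o$ is the same argument applied to $\iota(x)$, whose convergent data are the hatted quantities by Lemma~\ref{le:iota}.
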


\begin{proof}
The map $\iota$ is represented by the matrix $E:=\begin{pmatrix}-1&1\\1&1\end{pmatrix}$, and its inverse is $\tfrac{1}{2}E$.
We have
$E^{-1}
M_{(a_n,\varepsilon_n)_{s_n}}
E = 
M_{\phi((a_n,\varepsilon_n)_{s_n})}.$
Thus
$$\begin{pmatrix}{\widehat{P}_j}\\{\widehat{Q}_j}\end{pmatrix}=\widehat{M}_j\begin{pmatrix}0\\1\end{pmatrix} = E^{-1} M_j E\begin{pmatrix}0\\1\end{pmatrix}
\quad\text{and}\quad
\begin{pmatrix}{P_j}\\{Q_j}\end{pmatrix}=M_j\begin{pmatrix}0\\1\end{pmatrix} = E^{-1} \widehat{M}_j E \begin{pmatrix}0\\1\end{pmatrix}.$$
Recall that a symbol of type $(a,\varepsilon)_o$ corresponds to the triple $(1,1)$, $(a-\overline{\veps},\varepsilon)$, $(1,1)$ in the generalized continued fraction expansion.
Thus, $U_j=\varepsilon_jP_{j-1}$ and $V_j=\varepsilon_jQ_{j-1}$ if and only if $s_j=e$.
For the same reason, 
$\widehat U_j=\varepsilon_j \widehat P_{j-1}$ and $\widehat  V_j=\varepsilon_j\widehat Q_{j-1}$ if and only if $s_j=o$.
This completes the proof of the matrix identities.

If $s_j=e$, then $\widehat{P}_j+\widehat{Q}_j = \veps_j Q_{j-1} + Q_j$,
that is, $$\frac{\widehat{Q}_j}{Q_j}=\frac{1+\veps_jQ_{j-1}/Q_j}{1+\widehat{P}_j/\widehat{Q}_j}.$$
If $s_j=e$, then 
\begin{equation}\label{eq:eQ/Q}
\veps_j \frac{Q_{j-1}}{Q_j} = M_j^t.0 = \langle(a_j,\veps_j)_{s_j},(a_{j-1},\veps_{j-1})_{s_{j-1}},\cdots,(a_1,\veps_1)_{s_1}\rangle\in\left[\sqrt3-2,\frac1{\sqrt{3}}\right]. 
\end{equation}
From $\widehat{P}_j/\widehat{Q}_j\in(0,1)$, there exists a constant $C$ such that $1/C<\widehat{Q}_j/Q_j<C$ for all $j$ with $s_j=e$.
The same conclusion holds symmetrically when $s_j=o$.
\end{proof}
\begin{lem}\label{le:f}
For almost every $x\in [0,1]$, we have 
$$\lim_{N\to \infty}\frac{\sum_{n=1}^N\log L_{\alpha,n}}{N}=\int_0^1 \log fd\mu.$$
\end{lem}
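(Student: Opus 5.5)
The plan is to reduce the statement to Birkhoff's ergodic theorem for $(T,\mu)$. Three ingredients are needed: an identification of $L_{\alpha,n}$ with an observable evaluated along the $T$-orbit of $x$, ergodicity of $\mu$, and integrability of $\log f$.

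\emph{Step 1: $L_{\alpha,n}$ as an observable along the orbit.} By Proposition~\ref{pr:r_n}, $L_{\alpha,n}$ is the $\alpha$-factor
$$\frac{\alpha-1}{-(\alpha-2a)(\alpha-(2a+\veps))}$$
evaluated at $\alpha=\alpha_n^*$. Here $\alpha_n=1/x_n$ and $x_n=T^{n-1}x$.
\begin{itemize}
\item If $s_n=e$, the branch formula \eqref{eqn:2.1} gives $1/x_n-2a_n=\veps_n\,Tx_n$ and $1/x_n-(2a_n+\veps_n)=\veps_n(Tx_n-1)$. Hence the factor is an explicit rational function of $x_n$ and $Tx_n$.
\item If $s_n=o$, we first replace $\alpha_n$ by $\tilde\iota(\alpha_n)$. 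By Lemma~\ref{le:iota} and \eqref{eq:albe2}, $1/\tilde\iota(\alpha_n)$ is $\iota(x_n)$, which has $e$-type first digit $\phi((a_n,\veps_n)_{s_n})$. Since $\iota T=T\iota$ and $\iota(y)=(1-y)/(1+y)$, the same computation gives a rational function of $Tx_n$ with factors $(1\pm Tx_n)$.
\end{itemize}
Splitting further according to whether $Tx_n\in(0,\tfrac12]$ or $(\tfrac12,1)$, i.e.\ whether $s_{n+1}=e$ or $o$, one should obtain the four cases of \eqref{eq:f}. The expected relation is $L_{\alpha,n}=f(x_n)\cdot\frac{H(x_n)}{H(x_{n+1})}$ for some $H$ with $1/C\le H\le C$. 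If the bookkeeping of $L_{\alpha,n}$ versus $L_{\beta,n}$ has been arranged so that the match is exact, then $H\equiv 1$. In either case
$$\sum_{n=1}^N\log L_{\alpha,n}=\sum_{n=1}^N\log f(T^{n-1}x)+O(1),$$
uniformly in $x$.

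\emph{Step 2: ergodicity and integrability.} Ergodicity of $\mu$ follows from Theorem~\ref{th:com2}. The map $\widetilde T$ is a first return map of the geodesic flow on the finite-volume surface $T^1\M$, and that flow is ergodic with respect to Liouville measure (Hopf). Hence $\widetilde T$ is ergodic, and so is its factor $(\bar T,\bar\mu)$, and in turn its factor $(T,\mu)$.

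For integrability, note that $f$ is bounded below on each piece, since e.g.\ $(1+Tx)^2/(1-Tx)^2\ge1$ and $1/(Tx)^2\ge 1$. Also $\log f\le 2|\log Tx|+2|\log(1-Tx)|+C$. By $T$-invariance of $\mu$,
$$\int|\log Tx|\,d\mu(x)=\int_0^1|\log y|\,d\mu(y)<\infty,$$
because the density of $\mu$ in Corollary~\ref{abscontinv} is bounded. The same argument handles $|\log(1-y)|$, so $\log f\in L^1(\mu)$.

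\emph{Step 3: conclusion.} Birkhoff's theorem now gives the limit $\int_0^1\log f\,d\mu$ for $\mu$-a.e.\ $x$. Since $\mu$ is equivalent to Lebesgue measure, this holds for Lebesgue-a.e.\ $x$, and the $O(1)$ error from Step 1 disappears after dividing by $N$.

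The main obstacle is Step 1. The algebra must be matched carefully between the $e$- and $o$-cases, in particular the factor $2$ that appears when the type changes, the signs $\veps_n$, and the orientation reversal of $\iota$ and $\tilde\iota$. I would first verify the $e\to e$ case, where the target is exactly $1/(Tx)^2$. The type-switching cases would then follow via the conjugation $\iota T=T\iota$ and the relation $\iota(0)=1$, $\iota(\tfrac12)=\tfrac13$, which produces the constants $2$ and $\tfrac12$ in \eqref{eq:f}.
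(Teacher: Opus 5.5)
The overall route is the paper's: Birkhoff's theorem for $\log f$, a telescoping correction, ergodicity, and integrability of $\log f$. Step 2 is fine, and your bound $0\le\log f\le 2|\log Tx|+2|\log(1-Tx)|+C$ is slightly cleaner than the paper's comparison with $g(Tx)$.

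The gap is in Step 1. The correction term is \emph{not} $O(1)$, either with $H\equiv 1$ or with any bounded $H$. Already in the $e\to e$ case, your own identities $1/x_n-2a_n=\varepsilon_n Tx_n$ and $1/x_n-(2a_n+\varepsilon_n)=\varepsilon_n(Tx_n-1)$ give
\[
L_{\alpha,n}=\frac{1/x_n-1}{Tx_n\,(1-Tx_n)}=\frac{1}{(Tx_n)^2}\cdot\frac{1/x_n-1}{1/x_{n+1}-1},
\]
so the ``target'' $L_{\alpha,n}=1/(Tx_n)^2$ is false. In general, $\alpha_n^*=2a_n+\varepsilon_n/\alpha_{n+1}^{**}$ gives $L_{\alpha,n}=(\alpha_n^*-1)(\alpha_{n+1}^{**})^2/(\alpha_{n+1}^{**}-1)$. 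Hence $L_{\alpha,n}=f(x_n)\,H(x_n)/H(x_{n+1})$ with $H=\alpha_1^*-1$, that is, $H(x)=\frac{1-x}{x}$ on $(0,\frac12]$ and $H(x)=\frac{2x}{1-x}$ on $(\frac12,1)$. This $H$ blows up at both $0$ and $1$.

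Any other $H$ satisfying the same relation differs from this one by a $T$-invariant factor, hence by a constant a.e. by ergodicity. So no bounded choice exists. Moreover, $\mu$-a.e.\ orbit enters every neighbourhood of $0$, so the remainder $\log H(x)-\log H(T^Nx)$ is unbounded in $N$ for a.e.\ $x$. It is also not uniform in $x$, since $\log H(x)$ itself is unbounded.

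What rescues the argument is integrability, not boundedness. Since $0\le\log H(x)\le|\log x|+|\log(1-x)|+\log 2$, we have $\log H\in L^1(\mu)$; the paper computes $\int_0^1\log(\alpha_1^*-1)\,dx=\frac52\log 2$. Birkhoff applied to $\phi=\log H$ then gives
$\frac1N\phi(T^Nx)=\frac1N\sum_{n=0}^{N}\phi(T^nx)-\frac1N\sum_{n=0}^{N-1}\phi(T^nx)\to 0$ a.e.
So the telescoping remainder is $o(N)$ almost surely, which is all you need.

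This is how the paper handles it. It applies Birkhoff to the coboundary $\log\frac{\alpha_1^*-1}{\alpha_1^*\circ T-1}$, whose integral vanishes by $T$-invariance once $\log(\alpha_1^*-1)$ is known to be integrable. With this replacement your Step 3 goes through. Your ergodicity argument (Hopf for the geodesic flow, passage to the first-return map, then to its factors $\overline{T}$ and $T$) is a valid substitute for the paper's implicit appeal to ergodicity, which it only gets later from the spectral gap.
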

\begin{proof}
By Proposition~\ref{pr:r_n}, the $N$-th return time (total excursion time) is $$T_N=r_1+r_2+\cdots+r_{N}=\frac{1}{2}\sum_{n=1}^N\log L(\alpha^\ast_n,\beta^\ast_n).$$
We recall $(\alpha_{n}^{\ast},\beta_{n}^{\ast})$ and define $(\alpha_{n+1}^{\ast\ast},\beta_{n+1}^{\ast\ast})$ for $n\ge 1$ here as
$$(\alpha_{n}^{\ast},\beta_{n}^{\ast}) = \begin{cases}(\alpha_{n},\beta_{n})&\text{ if }s_n=e,\\
(\tilde{\iota}(\alpha_n),\iota(\beta_n))&\text{ if }s_n=o,\end{cases}\quad\text{and}\quad
(\alpha_{n+1}^{\ast\ast},\beta_{n+1}^{\ast\ast}) := \begin{cases}(\alpha_{n+1},\beta_{n+1})&\text{ if }s_n=e,\\
(\tilde{\iota}(\alpha_{n+1}),\iota(\beta_{n+1}))&\text{ if }s_n=o.\end{cases}$$
The following relations hold:
$$\alpha_{n}^\ast=2a_n+\frac{\veps_n}{\alpha_{n+1}^{\ast\ast}}
\qquad\text{and}\qquad  
\beta_{n+1}^{\ast\ast}=\frac{\veps_n}{2a_n+\beta_{n}^{\ast}}.$$
From the relations, we express $L(\alpha^\ast_n,\beta^\ast_n)$ as the product of
$$\frac{\alpha_{n}^\ast-1}{-(\alpha_{n}^\ast-2a_n)(\alpha_{n}^\ast-(2a_n+\veps_n))} = \frac{\alpha_{n}^\ast-1}{\alpha_{n+1}^{\ast\ast}-1}\cdot (\alpha_{n+1}^{\ast\ast})^2=:L_{\alpha,n},$$
$$\frac{(\beta_{n}^{\ast}+2a_n)(\beta_{n}^{\ast}+2a_n+\veps_n)}{\beta_{n}^{\ast}+1}=\frac{\beta_{n+1}^{\ast\ast}+1}{\beta_{n}^{\ast}+1}\cdot\frac{1}{(\beta_{n+1}^{\ast\ast})^2}=:L_{\beta,n}.$$
We will approximate the logarithmic average of $L_{\alpha, n} L_{\beta, n}$ by that of $L_{\alpha,n}^2$.

With the relation $\alpha_n=1/T^{n-1}x$, we regard $\alpha_n^{\ast}$ and $\alpha_{n+1}^{\ast\ast}$ as functions on $(0,1)$ given by
$$\alpha_1^\ast(x) = \begin{cases}
\frac{1}{x}&\text{if }x\in(0,\frac12),\\
\frac{1+x}{1-x}&\text{if }x\in(\frac12,1),
\end{cases}
\qquad 
\alpha_n^\ast(x) = \alpha_1^\ast(T^{n-1}x),
$$
and
$$\alpha_2^{\ast\ast}(x) = \begin{cases}
\frac{1}{Tx}&\text{if }x\in(0,\frac12),\\
\frac{1+Tx}{1-Tx}&\text{if }x\in(\frac12,1).
\end{cases}
\qquad 
\alpha_{n+1}^{\ast\ast}(x) = \alpha_2^{\ast\ast}(T^{n-1}x),
$$
By Birkhoff's ergodic theorem, for almost every $x\in(0,1)$, 
\begin{equation}\label{eq:Birk}
\lim_{N\to \infty}\frac{\sum_{n=1}^N\log L_{\alpha,n}}{N}=\int_0^1 \log \frac{\alpha_1^\ast-1}{\alpha_2^\ast-1} d\mu + \int_0^1\log\frac{\alpha_2^\ast-1}{\alpha_2^{\ast\ast}-1}(\alpha_2^{\ast\ast})^2 d\mu
\end{equation}
if the integrals are finite.

Since the density of $\mu$ with respect to Lebesgue measure is bounded away from zero and infinity, $\mu$-integrability is equivalent to Lebesgue integrability.
The integral
$$\int_0^1\log (\alpha_1^\ast-1) dx = \int_0^{1/2}\log\frac{1-x}{x}dx+\int_{1/2}^1\log\frac{2x}{1-x}dx = \frac{5}{2}\log 2$$
is finite.
Thus, $\log (\alpha_1^\ast-1)$ is integrable with respect to $\mu$.
Since $\alpha_2^\ast(x)=\alpha_1^\ast(Tx)$, the first integral of \eqref{eq:Birk} vanishes.

The integrand in the second term on the right-hand side of \eqref{eq:Birk} is the function $f$ since $$\frac{\alpha^\ast_{2}-1}{\alpha^{\ast\ast}_{2}-1}(\alpha^{\ast\ast}_{2})^2
=\begin{cases}
\frac{1}{(Tx)^2}&\text{if }x,\ Tx\in(0,\frac12),\vspace{0.5ex}\\
\frac{2}{(1-Tx)^2}&\text{if }x\in(0,\frac{1}{2}),\ Tx\in(\frac{1}{2},1),\vspace{0.5ex}\\
\frac{(1+Tx)^2}{(1-Tx)^2}&\text{if }x,\ Tx\in(\frac12,1),\vspace{0.5ex}\\
\frac{(1+Tx)^2}{2(Tx)^2}&\text{if }x\in(\frac12,1),\ Tx\in(0,\frac12).
\end{cases}$$
The function $f$ is bounded above by $g(Tx)$, where
$$g(x)=\frac{1}{x^2}\cdot \frac{2}{(1-x)^2}\cdot \frac{(1+x)^2}{(1-x)^2}\cdot\frac{(1+x)^2}{2x^2}.$$
The function $\log f$ is integrable, as is $\log g$.
Therefore, 
$$\lim_{N\to \infty}\frac{\sum_{n=1}^N\log L_{\alpha,n}}{N}=\int_0^1 \log fd\mu.$$
\end{proof}

Now let us prove part (2) and (3) of the proof of Theorem~\ref{th:exctime}.
We observe that, when written in terms of the second factors above, the product of $L_{\alpha, n}$ has cancellations when $\alpha_{n+1}^{**} = \alpha_{n+1}^*$. Thus, we are left with indices at which $s_n$ changes: i.e., 
for $k_0=1$, $k_1:=\min\{k:s_{k-1}\not=s_k\}$ and $k_i: = \min\{k>k_{i-1}:s_{k-1}\not=s_k\},$
\begin{align*}
    \prod_{n=k_i}^{k_{i+1}-1}L_{\alpha,n} = \frac{\alpha^\ast_{k_i}-1}{\alpha^{\ast\ast}_{k_{i+1}}-1}(\alpha^{\ast\ast}_{k_i+1}\alpha^{\ast\ast}_{k_i+2}
\cdots\alpha^{\ast\ast}_{k_{i+1}})^2 ;\\ %\quad\text{and}\quad
\prod_{n=k_i}^{k_{i+1}-1}L_{\beta,n} = \frac{\beta^{\ast\ast}_{k_{i+1}}-1}{\beta^{\ast}_{k_{i}}-1}(\beta^{\ast\ast}_{k_i+1}\beta^{\ast\ast}_{k_i+2}
\cdots\beta^{\ast\ast}_{k_{i+1}})^{-2}.
\end{align*}

\begin{lem}\label{le:1}
For $x\in(0,1)$, we have 
$\prod_{n=1}^NL_{\alpha,n}\asymp R_N^2,$
where $R_N$ is the sequence in \eqref{eq:R}.
\end{lem}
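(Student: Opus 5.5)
The plan is to use the telescoped block formula displayed just before the statement. Split $\{1,\dots,N\}$ into the maximal blocks $[k_i,k_{i+1}-1]$ on which $s_n$ is constant, with the last block truncated at $N$. Within a block, $\alpha^{\ast\ast}_{n+1}=\alpha^\ast_{n+1}$, so the ratio factors $\frac{\alpha^\ast_n-1}{\alpha^{\ast\ast}_{n+1}-1}$ telescope. Across a boundary $k_{i+1}$ they do not, because $\alpha^{\ast\ast}_{k_{i+1}}$ and $\alpha^{\ast}_{k_{i+1}}$ differ by the involution $\tilde\iota$. Since $\tilde\iota$ is bounded-distortion on the relevant range, the leftover ratio $\frac{\alpha^\ast_{k_{i+1}}-1}{\alpha^{\ast\ast}_{k_{i+1}}-1}$ at each switch is not uniformly bounded. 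I would therefore not bound these factors individually. Instead I would identify the whole product $\prod_{n=1}^N L_{\alpha,n}$, up to a bounded factor, with an explicit quantity coming from the matrices.

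\textbf{Step 1: reduce to the squared products.} Within a block of type $e$, the factor $(\alpha^{\ast\ast}_{k_i+1}\cdots\alpha^{\ast\ast}_{k_{i+1}})^2$ is $\prod (1/x_{n})^2$ in the notation $x_n=1/\alpha_n$. By \eqref{eq:Mx} and \eqref{eq:M^-1x}, products of the $x_n$ satisfy the classical identity
\[
x_1x_2\cdots x_{n+1}=\pm\,(Q_n x-P_n).
\]
Equivalently, $\prod x_j^{-1}\asymp Q_n$ up to the factor $1+\veps_nx_{n+1}Q_{n-1}/Q_n$. This factor is bounded above and away from $0$ by \eqref{eq:eQ/Q} together with $x_{n+1}\in(0,1)$: it is at least $1-(2-\sqrt3)$ in the worst case. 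For blocks of type $o$ the same identity holds with hats, via Lemma~\ref{le:iota} and the $o$-version of \eqref{eq:Mx}, and Lemma~\ref{le:p/q.star} converts $\widehat Q$ back to $Q$ up to a constant. Rather than handling blocks separately, I would prove directly by induction on $N$ that
\[
\prod_{n=1}^N L_{\alpha,n}=\frac{\alpha_1^\ast-1}{\alpha_{N+1}^{\ast\ast}-1}\,\bigl(D_N\bigr)^2,
\]
where $D_N$ is $|Q_Nx-P_N|^{-1}$ or $|\widehat Q_N\iota(x)-\widehat P_N|^{-1}$ according to $s_N$. The single-step identity $L_{\alpha,n}=\frac{\alpha_n^\ast-1}{\alpha_{n+1}^{\ast\ast}-1}(\alpha_{n+1}^{\ast\ast})^2$ gives the induction step as long as the type does not change. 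At a switch one must check that
\[
\frac{\alpha^{\ast}_{n}-1}{\alpha^{\ast\ast}_{n}-1}\cdot \frac{D_{n-1}^{\text{old}}}{D_{n-1}^{\text{new}}}
\]
is an exact or bounded quantity; here $\alpha^\ast_n=\tilde\iota(\alpha^{\ast\ast}_n)$. This is the main computation.

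\textbf{Step 2: the switch computation, which I expect to be the main obstacle.} At a switch, $\alpha-1$ and $\tilde\iota(\alpha)-1=2/(\alpha-1)$ are reciprocal up to a factor of $2$. So the leftover ratio is $\asymp(\alpha^{\ast\ast}_n-1)^{-2}$, which can be large when $\alpha_n$ is near $1$, i.e.\ when $x_n$ is near $1$. I would absorb it using the conversion in Lemma~\ref{le:p/q.star}, written now for $Q_jx-P_j$ rather than $Q_j$. The matrix $E$ gives
\[
\widehat Q_j\iota(x)-\widehat P_j=\frac{c\,\bigl(\veps_j(Q_{j-1}x-P_{j-1})+(Q_jx-P_j)\bigr)}{1+x}.
\]
Expressing both pieces through $x_{j+1}$ produces exactly a factor $(1\pm x_{j+1})$, which compensates $(\alpha^{\ast\ast}-1)^{-1}$. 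This is where the precise definition of $R_N$ enters. When $s_N\ne s_{N+1}$, the uncompensated last factor leaves $D_N\asymp Q_N$. When $s_N=s_{N+1}$, the boundary term $\frac{1}{\alpha^{\ast\ast}_{N+1}-1}$ is absorbed into one more step, giving $Q_{N+1}$. I would verify this by explicit $2\times2$ algebra with $M_{(a,\veps)_o}$ and $E$, treating the four sign and type cases separately.

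\textbf{Step 3: conclude.} Combining Steps 1 and 2, $\prod_{n=1}^N L_{\alpha,n}$ equals $R_N^2$ times a product of finitely many factors, each bounded above and below uniformly, by \eqref{eq:eQ/Q}, by $Q_{n-1}/Q_n<1$ from Lemma~\ref{le:inc}, and by $\alpha_1^\ast-1$ being fixed for the given $x$. Note that "fixed $x$" suffices for the definition of $\asymp$, whose constant may depend on $x$. This yields $\prod_{n=1}^N L_{\alpha,n}\asymp R_N^2$.
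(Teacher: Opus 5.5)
Your plan follows the paper's proof, and most of it is correct. You telescope $L_{\alpha,n}$ over maximal blocks of constant type and convert $(\alpha^{\ast\ast}_{k_i+1}\cdots\alpha^{\ast\ast}_{j+1})^{-2}$ into ratios of $(Q_jx-P_j)^2$ via \eqref{eq:M^-1x}. You then neutralize each switch with the matrix-$E$ identity (your formula holds with $c=-1$):
\[
\frac{\widehat Q_{n-1}\iota(x)-\widehat P_{n-1}}{Q_{n-1}x-P_{n-1}}=\frac{1/x_n-1}{1+x}.
\]
This turns each switch into the constant $2\alpha_1^2/(\alpha_1+1)^2$ or its reciprocal. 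Finally you use $|Q_Nx-P_N|^{-1}\asymp R_N$. Your ``exact'' induction formula is off by a fixed factor ($x^2$ or $\iota(x)^2$) and by these switch constants, which is harmless.

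The step that fails is the terminal factor $1/(\alpha^{\ast\ast}_{N+1}-1)$. You rightly keep it, but it cannot be ``absorbed into one more step,'' because it is unbounded in both directions. For $s_N=e$ one has $\alpha^{\ast\ast}_{N+1}=1/x_{N+1}$, so
\[
\frac{1}{\alpha^{\ast\ast}_{N+1}-1}=\frac{x_{N+1}}{1-x_{N+1}}\asymp\begin{cases}a_{N+1}^{-1}, & s_{N+1}=e,\\ a_{N+1}, & s_{N+1}=o.\end{cases}
\]
The case $s_N=o$ is symmetric, using $\tilde\iota(\alpha)-1=2/(\alpha-1)$. None of the bounds you cite in Step 3 (\eqref{eq:eQ/Q}, Lemma~\ref{le:inc}, fixed $\alpha_1^\ast$) control this factor. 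For example, if $s_1=\dots=s_{N+1}=e$, everything is explicit:
\[
\prod_{n=1}^N L_{\alpha,n}=(\alpha_1-1)\,x^2\,\frac{(Q_N+\veps_NQ_{N-1}x_{N+1})^2}{x_{N+1}(1-x_{N+1})}\asymp\frac{Q_N^2}{x_{N+1}}\asymp Q_NQ_{N+1}.
\]
Here $R_N=Q_{N+1}$ and $Q_{N+1}/Q_N\asymp a_{N+1}$, so the ratio to $R_N^2$ is $\asymp a_{N+1}^{-1}$.

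In general the same computation gives $\prod_{n=1}^N L_{\alpha,n}\asymp Q_NQ_{N+1}$. Hence $\prod_{n=1}^N L_{\alpha,n}/R_N^2\asymp a_{N+1}^{\mp 1}$, with the minus sign when $s_N=s_{N+1}$ and the plus sign otherwise. So the statement, with a constant uniform in $N$, is false for every $x$ with unbounded digits, i.e.\ for a.e.\ $x$, and no argument can close your step. The paper's written proof has the same omission: it sets aside only $(Q_Nx-P_N)^{-2}$ (or its hatted version) and $\alpha_1^\ast-1$, and tacitly treats $1/(\alpha^{\ast\ast}_{N+1}-1)$ as bounded.

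What your argument does prove, once this factor is kept, is $\prod_{n=1}^N L_{\alpha,n}\asymp Q_NQ_{N+1}$. That suffices for Theorem~\ref{th:exctime}, the only place the lemma is used. By Lemma~\ref{le:S_N}, $\mu(a_1>k)\asymp 1/k$, so $\log a_1\in L^1(\mu)$ and Birkhoff gives $\frac1N\log a_{N+1}\to0$ a.e. Hence $\frac1N\log(Q_{N+1}/Q_N)\to0$, and $\frac1N\log\prod_{n\le N}L_{\alpha,n}$ has the same a.e.\ limit as $\frac2N\log Q_N$. You should aim for that weaker statement rather than trying to absorb the boundary term.
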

\begin{proof}
By using \eqref{eq:M^-1x}, for $k_i\le j<k_{i+1}$, we have
$$(\alpha^{\ast\ast}_{k_i+1}\alpha^{\ast\ast}_{k_i+2}
\cdots\alpha^{\ast\ast}_{j+1})^{-2} = 
\begin{cases}
(x_{k_i+1}x_{k_i+2}\cdots x_{j+1})^2 = \dfrac{(Q_{j}x-P_{j})^2}{(Q_{k_i-1}x-P_{k_i-1})^2},&\text{if }s_{k_i}=e, \vspace{1ex}\\
(\iota(x_{k_i+1}) \iota(x_{k_i+2})\cdots \iota(x_{j+1}))^2 = \dfrac{(\widehat{Q}_{j} \iota(x)-\widehat{P}_{j})^2}{(\widehat{Q}_{k_i-1}\iota(x)-\widehat{P}_{k_i-1})^2},&\text{if }s_{k_i}=o.
\end{cases}
$$

By Lemma~\ref{le:p/q.star}, if $s_{n-1}=e$, then we have
$$\frac{\widehat{Q}_{n-1} \iota(x)-\widehat{P}_{n-1}}{Q_{n-1} x-P_{n-1}} = \frac{\frac{1}{x_n}-1}{1+x}=\alpha_1\cdot \frac{\alpha_n-1}{\alpha_1+1}.$$
Thus, when $s_{k_i}=o$, and hence $s_{k_i-1}=e$, we have
$$\frac{\tilde{\iota}(\alpha_{k_i})-1}{\alpha_{k_i}-1}\cdot\frac{(\widehat{Q}_{k_i-1} \iota(x)-\widehat{P}_{k_i-1})^2}{(Q_{k_i-1} x-P_{k_i-1})^2} = \frac{2}{(\alpha_{k_i}-1)^2}\cdot\frac{\alpha_1^2(\alpha_{k_i}-1)^2}{(\alpha_1+1)^2} = \frac{2\alpha_1^2}{(\alpha_1+1)^2}.$$
Similarly, when $s_{k_i}=e$, we have
$$\frac{\alpha_{k_i}-1}{\tilde{\iota}(\alpha_{k_i})-1}\cdot\frac{(Q_{k_i-1}x-P_{k_i-1})^2}{(\widehat{Q}_{k_i-1} \iota(x)-\widehat{P}_{k_i-1})^2} = \frac{2{(\tilde{\iota}(\alpha_1))}^2}{(\tilde{\iota}(\alpha_1)+1)^2}=\frac{(\alpha_1+1)^2}{2\alpha_1^2}.$$

In the product \(\prod_{n=0}^N L_{\alpha,n}\), once we exclude the terminal factor depending on \(N\), namely
$$\begin{cases}(Q_{N}x-P_{N})^{-2}&\text{if }s_N=e,\\
(\widehat{Q}_N\iota(x)-\widehat{P}_N)^{-2}&\text{if }s_N=o,\end{cases}$$
and the initial factor \((\alpha_1^*-1)\), the remaining factors alternate between
\(\frac{2\alpha_1^2}{(\alpha_1+1)^2}\) and its reciprocal.
Since $\alpha_1^*-1\ge 1$ and \(\frac{\alpha_1}{\alpha_1+1}\in\big[\tfrac12,1\big]\), the remaining product is bounded above and below by positive constants independent of \(N\).

Let $s_{k_i}=e$. For $k_i\le N\le k_{i+1}-2$, we have $s_N=s_{N+1}=e$.
From \eqref{eq:Mx},
$$|Q_Nx-P_N| = \left|Q_N\frac{\varepsilon_{N+1}P_{N}x_{N+2}+P_{N+1}}{\varepsilon_{N+1}Q_{N}x_{N+2}+Q_{N+1}}-P_{N}\right| = \frac{1}{Q_{N+1}\left|\frac{\varepsilon_{N+1}Q_N}{Q_{N+1}}x_{N+2}+1\right|}.$$
For $N=k_{i+1}-1$, we have $s_N=e$ and $s_{N+1}=o$. 
Thus,
$$|Q_Nx-P_N| = \left|Q_N\frac{\varepsilon_N P_{N-1}x_{N+1}+P_{N}}{\varepsilon_NQ_{N-1}x_{N+1}+Q_{N}}-P_N\right| = \frac{1}{Q_N\left|\frac{\varepsilon_N Q_{N-1}}{Q_N}+\frac{1}{x_{N+1}}\right|}.$$
From \eqref{eq:eQ/Q} and the fact that $x_j\in(\frac12,1)$ is equivalent to $s_j=o$, it follows that \(R_N\,|Q_N x-P_N|\) is bounded above and below by positive constants independently of \(N\), whenever \(s_N=e\).

The case $s_{N}=o$ is treated analogously.
In this case, the quantities
$$
\begin{cases}\widehat{Q}_{N+1}|\widehat{Q}_N \iota(x)-\widehat{P}_N|&\text{ when }N\not= k_{i+1}-1,\\  \widehat{Q}_{N} |\widehat{Q}_N \iota(x)-\widehat{P}_N|&\text{ when }N= k_{i+1}-1,
\end{cases}
$$
are bounded above and below by positive constants,
and the conclusion follows from \(Q_N\asymp \widehat Q_N\).
\end{proof}

\begin{lem}\label{le:2}
We have $\prod_{n=1}^N L_{\beta,n} \asymp Q_N^2$ for $x\in(0,1)$.
\end{lem}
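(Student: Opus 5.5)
The argument will mirror Lemma~\ref{le:1}, using the telescoping form of $L_{\beta,n}$ block by block together with the transpose matrix formula \eqref{eq:Mtbeta}. Within a block $k_i\le n<k_{i+1}$ on which $s_n$ is constant, the product
$$\prod_{n=k_i}^{k_{i+1}-1}L_{\beta,n}=\frac{\beta^{\ast\ast}_{k_{i+1}}+1}{\beta^{\ast}_{k_i}+1}\,(\beta^{\ast\ast}_{k_i+1}\cdots\beta^{\ast\ast}_{k_{i+1}})^{-2}$$
telescopes. Here the relevant variable is $\beta_n$ if $s_{k_i}=e$, and $\iota(\beta_n)$ if $s_{k_i}=o$.

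First I will compute the product $\beta^{\ast\ast}_{k_i+1}\cdots\beta^{\ast\ast}_{j+1}$ for $k_i\le j<k_{i+1}$. Iterating \eqref{eq:Mtbeta} (with $\beta_1$ fixed), each factor has the form
$$\beta_{m+1}=\frac{\veps_m(P_{m-1}\beta_1+Q_{m-1})}{P_m\beta_1+Q_m}.$$
The product therefore collapses to
$$\pm\frac{P_{k_i-1}\beta_1+Q_{k_i-1}}{P_j\beta_1+Q_j},$$
with hatted quantities and $\iota(\beta_1)$ in the $o$-case. So inside a block
$$(\beta^{\ast\ast}\cdots)^{-2}=\frac{(P_j\beta_1+Q_j)^2}{(P_{k_i-1}\beta_1+Q_{k_i-1})^2},$$
which is the dual analogue of the $(Q_jx-P_j)^2$ expression in Lemma~\ref{le:1}.

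Second, I will handle the junctions between blocks. Using Lemma~\ref{le:p/q.star} (the $E$-conjugation), I will relate $\widehat P_{n-1}\iota(\beta_1)+\widehat Q_{n-1}$ to $P_{n-1}\beta_1+Q_{n-1}$. Since $E$ acts on the column vector $(\beta_1,1)$ as well as on $(P,Q)$, I expect the ratio to be an explicit factor of the form $c(\beta_1)\cdot(\text{something in }\beta_n)$. Combined with the boundary factors $(\beta^{\ast\ast}_{k_{i+1}}+1)/(\beta^\ast_{k_i}+1)$, this should give alternating constants depending only on $\beta_1$, exactly as $2\alpha_1^2/(\alpha_1+1)^2$ and its reciprocal did in Lemma~\ref{le:1}. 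All the variables involved ($\beta_n\in(\sqrt3-2,\sqrt3]$ and $\iota(\beta_n)$) stay in a fixed range away from $-1$, so every leftover factor $\beta^\ast+1$ is bounded above and below. After these cancellations, the total product equals $(P_N\beta_1+Q_N)^2$ (or its hatted version) up to bounded factors.

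Finally, I will show $P_N\beta_1+Q_N\asymp Q_N$. Writing $P_N\beta_1+Q_N=Q_N(1+\beta_1P_N/Q_N)$ with $P_N/Q_N\in[0,1]$ and $\beta_1\in(\sqrt3-2,\sqrt3]$ gives $1+\beta_1P_N/Q_N\ge 1+(\sqrt3-2)>0$, and it is bounded above by $1+\sqrt3$. In the hatted case, I will also use $\widehat Q_N\asymp Q_N$ from Lemma~\ref{le:p/q.star}.

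The main obstacle is the junction step: verifying that the $\iota$-conversion of $P\beta_1+Q$ across a change of type produces factors that cancel pairwise, or at least remain uniformly bounded. I would first attempt the direct matrix computation with $E$, tracking the $\varepsilon$ signs and the sign conventions $\pm$ carefully.
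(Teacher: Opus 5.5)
Your proposal is correct and follows the paper's proof essentially step for step: block-wise telescoping of $\beta^{\ast\ast}_{k_i+1}\cdots\beta^{\ast\ast}_{j+1}$ via \eqref{eq:Mtbeta}, conversion at type changes via Lemma~\ref{le:p/q.star}, and finally $P_N\beta_1+Q_N\asymp Q_N$ (resp.\ the hatted version together with $\widehat Q_N\asymp Q_N$). The junction step you flag works out as you predict. For $s_{k_i-1}=e$ one gets $\widehat P_{k_i-1}\iota(\beta_1)+\widehat Q_{k_i-1}=(1+\beta_{k_i})(P_{k_i-1}\beta_1+Q_{k_i-1})/(1+\beta_1)$, which together with $\iota(\beta)+1=2/(1+\beta)$ yields the constants $(1+\beta_1)^2/2$ and $2/(1+\beta_1)^2$ alternately; your fallback ``or at least remain uniformly bounded'' would not suffice, because the number of type changes up to $N$ is unbounded, so it is this exact alternation that makes the argument work.
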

\begin{proof}
Let $\mathcal{E}_{j}=\prod_{n=k_i}^{j}\varepsilon_{n}$.
From \eqref{eq:Mtbeta}, we deduce that for $k_i\le j<k_{i+1}$, 
$$\beta^{\ast\ast}_{k_i+1}\beta^{\ast\ast}_{k_i+2}\cdots \beta^{\ast\ast}_{j+1} = 
\begin{cases}
\beta_{k_i+1}\beta_{k_i+2}\cdots\beta_{j+1}=\mathcal{E}_{j}\cdot\dfrac{P_{k_i-1}\beta_{1}+Q_{k_i-1}}{P_{j}\beta_{1}+Q_{j}}, &\text{if }s_{k_i}=e,\vspace{1ex} \\ 
\iota(\beta_{k_i+1}) \iota(\beta_{k_i+2})\cdots \iota(\beta_{j+1}) = \mathcal{E}_{j}\cdot \dfrac{\widehat{P}_{k_i-1}\iota(\beta_{1})+\widehat{Q}_{k_i-1}}{\widehat{P}_{j}\iota(\beta_{1})+\widehat{Q}_{j}},&\text{if }s_{k_i}=o.
\end{cases}$$

From Lemma~\ref{le:p/q.star}, if $s_{k_i-1}=e$, then 
$$\widehat{P}_{k_i-1}\iota(\beta_{1})+\widehat{Q}_{k_i-1} = \frac{\varepsilon_{k_i-1}(Q_{k_i-2}+P_{k_i-2}\beta_{1})+(Q_{k_i-1}+P_{k_i-1}\beta_{1})}{1+\beta_{1}}.$$
Combining with \eqref{eq:Mtbeta}, we obtain
$$\frac{\beta_{k_i}+1}{\iota(\beta_{k_i})+1}\cdot\frac{(P_{k_i-1}\beta_{1}+Q_{k_i-1})^2}{(\widehat{P}_{k_i-1}\iota(\beta_{1})+\widehat{Q}_{k_i-1})^2}= \frac{(1+\beta_{k_i})^2}{2}\cdot\frac{(1+\beta_{1})^2}{(\beta_{k_i}+1)^2}=\frac{(1+\beta_{1})^2}{2}.$$
In a similar way, if $s_{k_i-1}=o$, then we have
$$\frac{\iota(\beta_{k_i})+1}{\beta_{k_i}+1}\cdot\frac{(\widehat{P}_{k_i-1}\iota(\beta_{1})+\widehat{Q}_{k_i-1})^2}{(P_{k_i-1}\beta_{1}+Q_{k_i-1})^2}=\frac{(1+\iota(\beta_{1}))^2}{2}=\frac{2}{(1+\beta_{1})^2}.$$
Therefore,
$$\prod_{n=1}^N L_{\beta,n}\asymp
\begin{cases} (P_N\beta_1+Q_N)^2 = Q_N^2\left(\frac{P_N}{Q_N}\beta_1+1\right)^2 &\text{if }s_N=e,\\
(\widehat{P}_N\iota(\beta_1)+\widehat{Q}_N)^2 =  \widehat{Q}_N^2\left(\frac{\widehat{P}_N}{\widehat{Q}_N}\iota(\beta_1)+1\right)^2&\text{if }s_N=o.
\end{cases}$$
Since $P_j/Q_j,\widehat{P}_j/\widehat{Q}_j\in(0,1)$, $Q_j\asymp \widehat{Q}_j$ and $\beta_{j},\iota(\beta_{j})\in[\sqrt{3}-2,\sqrt{3}]$,
the conclusion follows.
\end{proof}

\section{Galambos' theorem} \label{sec:4}
In this section, by using the spectral gap of the transfer operator, we prove Galambos' theorem for the SCF, an extreme value theorem for the SCF digits.
Let $L^1 = L^1([0,1])$ denote the $L^1$-space on $[0,1]$ with respect to the Lebesgue measure whose $L^1$-norm is denoted by $\|\cdot\|_1$.
Likewise, $L^\infty = L^\infty([0,1])$ denotes the $L^\infty$-space on $[0,1]$ equipped with the essential supremum norm $\|\cdot\|_\infty$ with respect to the Lebesgue measure.

\begin{defn}[Transfer Operator]
The \emph{transfer operator} $\TF$ of the spliced continued fraction map $\CF$ is the operator on $L^1$ uniquely determined by the relation
\[
\int f(x) (g\circ T)(x) dx = \int \TF f(x) g(x) dx
\]
for $f \in L^1$ and $g \in L^{\infty}$.
Using the inverse branches $h_{(a,\veps)_s}$ of $T$ onto $I_{(a,\veps)_s}$, we express the transfer operator as follows. For $f \in L^1$,
\begin{align}\label{TFdefn}
    \TF f (x) =& \sum_{(a, \veps)_{s} \in \mathscr{A}} \left| (h_{(a, \veps)_s})'(x)\right| \left(f\circ h_{(a, \veps)_s}\right)(x).
\end{align}
More explicitly,   \begin{align*}
    \TF f (x)  =& \frac{1}{(x+2)^{2}}f\left(\frac{1}{x+2}\right)+\sum_{k=2}^{\infty} 
    \left[ \frac{1}{(x+2k)^{2}} f\left(\frac{1}{x+2k}\right) + \frac{1}{(2k-x)^{2}} f\left(\frac{1}{2k-x}\right) \right.\\
    +
    &\frac{1}{(k+(k+1)x)^{2}} f\left(\frac{kx+(k-1)}{k+(k+1)x}\right)
    +
    \left.\frac{1}{(kx+(k+1))^{2}} f\left(\frac{k+(k-1)x}{kx+(k+1)}\right)
    \right].
\end{align*}
\end{defn}

Let $\lambda$ be an absolutely continuous probability measure on $[0,1]$.
For $f \in L^\infty$, the essential variation $v(f)$ is defined by
\[
v(f)
= \lim_{a \to 0^+} \frac{1}{a} \int_0^1 |f(u+a)-f(u)|\,du.
\]

If $v(f)<\infty$, then $f$ is said to have bounded essential variation. 
The space $BV_\lambda$ of functions with bounded essential variation contains the space of classical bounded variation functions, and it is a commutative Banach algebra when endowed with the norm
\[
\|f\|_{v,\lambda} := v(f) + \|f\|_{1,\lambda},
\]
where $\|\cdot\|_{1, \lambda}$ is the $L^1$-norm with respect to $\lambda$. 
When $\lambda = \mathrm{Leb}$, we write $BV = BV_{\mathrm{Leb}}$ and $\|\cdot\|_v = \|\cdot\|_{v, \mathrm{Leb}}$.

The transfer operator $\TF$ defines a bounded operator on $BV$. 
Moreover, we shall verify that $\TF : BV \to BV$ has a spectral gap.
From Corollary~\ref{abscontinv}, we obtain an eigenfunction belonging to the eigenvalue $1$: the density function, which we denote by $f_\mu$, the density of the absolutely continuous invariant probability measure $\mu$.
By the spectral gap, it is the eigenfunction belonging to $1$. 

For the brevity of notations, for $A^{(n)}=(A_1,A_2,\dots,A_n)\in \mathscr A^n$, let $h_{A^{(n)}}:=h_{A_1}\circ\cdots\circ h_{A_n}$ denote the composition of inverse branches and $I_{A^{(n)}} := h_{A^{(n)}}([0,1])$ the image by the composition of inverse branches.   

\begin{thm}[Spectral gap]
    The transfer operator $\TF : (L^1,\|\cdot\|_1) \to (L^1,\|\cdot\|_1)$ has a unique eigenvalue $1$ of maximal modulus which is a simple eigenvalue.
    Define operators $\Pi$ and $R$ on $L^1$ by $$\Pi g(x) = f_\mu(x)\int_0^1 g(t) dt\qquad\text{and}\qquad R=\TF-\Pi.$$
    Then $R(BV) \subset BV$, $\|R^n\|_1 \le 1$, $\|R^n\|_v =O(\theta^n)$ as $n \to \infty$ for some $0<\theta<1$, and $\TF^n = \Pi + R^n$.
\end{thm}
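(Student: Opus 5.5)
Our plan is the standard route for Gauss-type maps with countably many full branches: prove a Lasota--Yorke (Doeblin--Fortet) inequality for $\TF$ on $(BV,\|\cdot\|_v)$ relative to $\|\cdot\|_1$, deduce quasi-compactness via Ionescu-Tulcea--Marinescu (or Hennion), and then use mixing of $\mu$ to isolate the eigenvalue $1$. Throughout we use the facts already available: every branch of $\CF$ is full, i.e.\ $T(I_{(a,\veps)_s})=[0,1]$ by \eqref{eqn:2.1}, and the density $f_\mu$ of Corollary~\ref{abscontinv} is bounded away from $0$ and $\infty$.

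\emph{Step 1: distortion and expansion.} The branch derivative $|h_{(a,\veps)_s}'(x)|$ equals $(2a+\veps x)^{-2}$ in the $e$-case and, for the $o$-branches, $(k+(k+1)x)^{-2}$ or $(kx+(k+1))^{-2}$. We first verify two properties. (i) Bounded distortion: $|h''_A|/|h'_A|\le K$ uniformly in $A\in\mathscr A$, which is clear because each $h_A$ is Möbius with pole outside a fixed neighbourhood of $[0,1]$; by the chain rule this extends to all compositions $h_{A^{(n)}}$. (ii) Eventual expansion: $\sup_{A^{(n)}}\|h_{A^{(n)}}'\|_\infty\le C\theta_0^n$ with $\theta_0<1$. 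Individual branches are not contracting at the parabolic points ($h_{(1,1)_e}$ has derivative $1/4$, but the $o$-branches near $x=1$ and the $e$-branches near $0$ have derivatives tending to $1$ only along the Romik-type fixed points $0,1$). Since $T(0)=0$ and $T(1)=1$ are not attained inside any branch interval except in the limit $k\to\infty$, we expect $|(T^2)'|\ge \kappa>1$ everywhere. This should follow by checking the finitely many worst branches, or alternatively from $Q_n$ growth (Lemma~\ref{le:inc}), since $|h_{A^{(n)}}'|\asymp Q_n^{-2}$.

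\emph{Step 2: Lasota--Yorke.} Using the formula \eqref{TFdefn} iterated, namely $\TF^n f=\sum_{A^{(n)}}|h'_{A^{(n)}}|\,f\circ h_{A^{(n)}}$, together with the standard estimate for the variation of a sum of products over full branches, we obtain
\[
v(\TF^n f)\le \sup_{A^{(n)}}\|h'_{A^{(n)}}\|_\infty\, v(f)+K'\|f\|_1 .
\]
The constant $K'$ comes from distortion and from $\sum_{A}\|h_A'\|_\infty<\infty$; the latter holds because the terms are of order $k^{-2}$. Choosing $n_0$ with $C\theta_0^{n_0}<1$ gives the Doeblin--Fortet inequality. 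Moreover, the $\|\cdot\|_v$-unit ball is relatively compact in $L^1$ (Helly), and $\|\TF\|_1\le1$. Ionescu-Tulcea--Marinescu therefore yields that $\TF$ is quasi-compact on $BV$: there are finitely many peripheral eigenvalues of modulus $1$, each with finite-dimensional eigenspace, and the rest of the spectrum lies in a disc of radius $\theta<1$.

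\emph{Step 3: peripheral spectrum.} We show that the only peripheral eigenvalue is $1$ and that it is simple, with eigenvector $f_\mu$. Simplicity follows from ergodicity of $\mu$, which in turn follows from full branches with bounded distortion (a Rényi condition). To exclude other unimodular eigenvalues we prove exactness (equivalently mixing) of $T$. This also comes from the Rényi condition: for any cylinder $I_{A^{(n)}}$ and measurable $E$ we have $\mathrm{Leb}(E\cap I_{A^{(n)}})\ge c\,\mathrm{Leb}(E)\,\mathrm{Leb}(I_{A^{(n)}})$, and since cylinders generate the $\sigma$-algebra, $T$ is exact. Then $\Pi$ is the spectral projection onto $\mathrm{span}(f_\mu)$, because $\int\TF g=\int g$. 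Next, $\Pi\TF=\TF\Pi=\Pi$, so $R\Pi=\Pi R=0$ and $\TF^n=\Pi+R^n$. Finally, $\|R^n\|_v=O(\theta^n)$ is the spectral radius bound, and $R(BV)\subset BV$ is immediate. For $\|R^n\|_1\le1$ we would argue on the subspace $\{\int g=0\}$, where $R^n=\TF^n$ is an $L^1$-contraction; if the general bound is needed, we would interpret $\|\cdot\|_1$ accordingly or rescale the constant.

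The main obstacle we expect is Step 1(ii): uniform eventual expansion in the presence of the neutral-looking points $0$ and $1$, which are limits of branches with derivatives near $1$. We would first try a direct computation of $\inf|(T^2)'|$ over pairs of branches, using that large-$k$ branches have derivative at least of order $k^2$. Failing that, we would use the convergent bound $|h'_{A^{(n)}}|\le Q_n^{-2}\cdot O(1)$ together with the exponential growth of $Q_n$ extracted from the recursion in the proof of Lemma~\ref{le:inc}, namely $Q_n+V_n$ at least doubling every two steps.
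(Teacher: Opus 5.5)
Your proposal is correct and is essentially the paper's argument with the black box opened: the paper checks exactly your ingredients (uniform contraction (E$_m$) of the inverse branches, bounded distortion yielding (C), and summable variation (BV) of the $h_A'$), and then cites Theorem~5.3.12 and Proposition~5.3.14 of Iosifescu--Grigorescu in place of your Lasota--Yorke, Ionescu-Tulcea--Marinescu and exactness steps. Three small corrections: (i) the ``main obstacle'' of Step~1(ii) does not exist, since every single branch already satisfies $|h_A'|\le 1/4$ (the branches accumulating at the Romik-neutral points $0$ and $1$ have $|\CF'|\ge k^2\to\infty$, not derivatives tending to $1$), so no $\CF^2$ or $Q_n$ argument is needed; (ii) your Rényi inequality must be stated as $\mathrm{Leb}(\CF^{-n}E\cap I_{A^{(n)}})\ge c\,\mathrm{Leb}(E)\,\mathrm{Leb}(I_{A^{(n)}})$ (or for tail-measurable $E$), not for arbitrary measurable $E$; (iii) your hedge on $\|R^n\|_1\le 1$ is warranted, because $R^n=\TF^n-\Pi$, and testing on $g=1_J/\mathrm{Leb}(J)$ for shrinking intervals $J$ inside a rank-$n$ cylinder gives $\|R^n g\|_1\to 2$, so that bound holds only on $\{\int g=0\}$, where $R^n=\TF^n$ is an $L^1$-contraction, and the paper gives no separate argument for it.
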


\begin{proof}
The proof will use Theorem~5.3.12 and Proposition~5.3.14 in \cite{IG45}.

The derivatives of the inverse branches $h_{(a, \veps)_s}$ of $T$ are
$$
h_{(a, \veps)_e}'(x)= \frac{-\veps}{(2a + \veps x)^2},\quad
h_{(a, -1)_o}'(x) =\frac{1}{((a+1)x+a)^2},\quad
h_{(a, +1)_o}'(x) =\frac{-1}{(ax+(a+1))^2},
$$
hence we have
\[
\frac{1}{(2a+1)^2}\le|h_{(a,\veps)_e}'(x)| \le \frac{1}{(2a)^2},\quad
\frac{1}{(2a+1)^2}\le|h_{(a,\veps)_o}'(x)|\le \frac{1}{a^2}.
\]
%In particular, for any $(a, \veps)_s$, $|h_{(a,\veps)_s}'(x)| \le 1/4$.

For $A^{(m)} = (A_1, \dots, A_m) \in \mathscr A^{m}$, 
\begin{equation}\label{eq:h'An}
h_{A^{(m)}}'=(h_{A_1}'\circ h_{A_2,\dots,A_m})\cdot(h_{A_2}'\circ h_{A_3,\dots,A_m})\cdots (h_{A_m}').
\end{equation}
Since $|h_{(a,\veps)_s}'(x)| \le 1/4$ for any $(a, \veps)_s$, we have for every $m\ge1$, 
\begin{equation}
    \sup_{A^{(m)}\in\mathscr A^m}\underset{t\in[0,1]}{\mathrm{ess~sup}}|h'_{A^{(m)}}(t)|\le \frac{1}{4^m}<1,\tag{$\mathrm{E}_m$}
\end{equation}

The second order derivatives of the inverse branches are
\[
h_{(a, \veps)_e}''(x)= \frac{2}{(2a + \veps x)^3},\quad
h_{(a, -1)_o}''(x) =\frac{-2(a+1)}{((a+1)x+a)^3}\quad\text{and}\quad
h_{(a, +1)_o}''(x) =\frac{2a}{(ax+(a+1))^3}.
\]
Since $\mathrm{var}\;h_{(a,\veps)_s}' \le |h_{(a, \veps)_s}''|$ and $|h_{(a,\veps)_s}''| \le 4/a^2$ for every $(a,\veps)_s$, $\TF$ satisfies the condition 
\begin{equation}\label{eq:bdddist}
    \sum_{(a,\veps)_s\in\mathcal A}\mathrm{var}\;h'_{(a,\veps)_s} <\infty.\tag{BV}
\end{equation}

One can verify that for every $(a, \veps)_{s} \in \mathscr{A}$, the distortion bound $|h_{(a, \veps)_{s}}''(x)| \le 3|h_{(a, \veps)_{s}}'(x)|$ holds.
More generally, for $A^{(n)} \in \mathscr A^n$, by \eqref{eq:h'An}
we have 
\begin{align*}
    \frac{h_{A^{(n)}}''}{h_{A^{(n)}}'}
    =\frac{h_{A_1}''\circ h_{A_2,\dots,A_n}}{h_{A_1}'\circ h_{A_2,\dots,A_n}} h_{A_2,\dots,A_n}'
    +\frac{h_{A_2}''\circ h_{A_3,\dots,A_n}}{h_{A_2}'\circ h_{A_3,\dots,A_n}} h_{A_3,\dots,A_n}'
    +\cdots + \frac{h_{A_{n-1}}''\circ h_{A_n}}{h_{A_{n-1}}'\circ h_{A_n}} h_{A_n}' + \frac{h_{A_n}''}{h_{A_n}'}.
\end{align*}
Using the above distortion bound for each branch, we obtain the estimate $$|h''_{A^{(n)}}|/|h'_{A^{(n)}}|\lesssim |h'_{A_2,\dots,A_n}|+|h'_{A_{3},\dots,A_n}|+\dots+|h'_{A_n}|+1<\sum_i 4^{-i}$$
which leads to the lemma stated below.
\begin{lem}[Bounded distortion]\label{le:bdd.dist}
There exists a constant $C>0$ such that for any $n$ and for every $A^{(n)}\in\mathscr A^n$,
$$\sup_{A\in\mathscr A}\underset{t\in[0,1]}{\mathrm{ess~sup}}\frac{|h_{A^{(n)}}''(t)|}{|h_{A^{(n)}}'(t)|}< C.$$
\end{lem}

According to \cite[Proposition~5.3.4]{IG45}, for a $C^{1}$ piecewise monotonic transformation whose branch derivatives are absolutely continuous on $[0,1]$, Condition~(E$_m$) for some $m \in \mathbb{N}^*$ together with the bounded distortion property in Lemma~\ref{le:bdd.dist} imply
\begin{equation}\label{eq:C}
\frac{\operatorname*{ess\,sup}_{t\in [0,1]}\bigl|h'_{A^{(n)}}(t)\bigr|}
     {\operatorname*{ess\,inf}_{t\in [0,1]}\bigl|h'_{A^{(n)}}(t)\bigr|}
\le C,
\qquad
A^{(n)} \in \mathscr{A}^n,\; n \in \mathbb{N}. \tag{C}
\end{equation}
It follows from Theorem~5.3.12 and Proposition~5.3.14 in~\cite{IG45} that for a $C^{1}$ piecewise monotonic transformation, if conditions~(BV), (E$_m$) for some $m \ge 1$, and~(C) are satisfied, then the associated transfer operator has the desired spectral gap property in the statement.
Since these conditions have now been verified for $T$, this completes the proof.
\end{proof}

Denote the \emph{normalized transfer operator} by $\widetilde \TF$, which is defined by 
$$\widetilde \TF g := \frac{1}{f_\mu}\TF(f_\mu g) = \frac{1}{f_\mu}\sum_{A\in\mathscr A}|h_A'|(f_\mu\circ h_A)(g\circ h_A).$$
The $n$-th iteration of $\widetilde\TF$ is
\begin{equation}\label{eq:nth.op}
\widetilde\TF^n g = \frac{1}{f_\mu}\sum_{A^{(n)}\in\mathscr A^n}|h_{A^{(n)}}'|(f_\mu\circ h_{A^{(n)}})(g\circ h_{A^{(n)}}).
\end{equation}

\begin{lem}\label{le:v.mu}
For $A^{(n)}\in\mathscr A^n$, 
$$v(\widetilde\TF^n 1_{I_{A^{(n)}}}) = O(\mu(I_{A^{(n)}})).$$
\end{lem}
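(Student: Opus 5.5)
The plan is to compute $\widetilde\TF^n 1_{I_{A^{(n)}}}$ explicitly and then bound its variation by a distortion argument. In the sum \eqref{eq:nth.op}, $1_{I_{A^{(n)}}}\circ h_{B^{(n)}}$ vanishes almost everywhere unless $B^{(n)}=A^{(n)}$, because the cylinders $I_{B^{(n)}}$, $B^{(n)}\in\mathscr A^n$, partition $(0,1)$ up to endpoints. For $B^{(n)}=A^{(n)}$ this composition is identically $1$ on $[0,1]$. Hence
$$\widetilde\TF^n 1_{I_{A^{(n)}}} = \frac{|h_{A^{(n)}}'|\,(f_\mu\circ h_{A^{(n)}})}{f_\mu},$$
a single $C^1$ function on $[0,1]$. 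Its essential variation is therefore bounded by the total variation of this function, i.e.\ by $\int_0^1|(\cdot)'|\,dt$.

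Next I bound the derivative. Write $G=|h_{A^{(n)}}'|$, $F=f_\mu\circ h_{A^{(n)}}$ and $H=1/f_\mu$. By Corollary~\ref{abscontinv}, $f_\mu$ is an explicit rational function on $[0,1]$, bounded above and below by positive constants, with bounded derivative. Consequently $H$ and $H'$ are bounded, and
$$|F'| = |f_\mu'\circ h_{A^{(n)}}|\,G \le C\,G.$$
By Lemma~\ref{le:bdd.dist}, $|G'| = |h''_{A^{(n)}}| \le C\,G$. The product rule then gives $|(GFH)'| \le C'\,G$ pointwise, with $C'$ independent of $n$ and $A^{(n)}$. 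Integrating,
$$v\bigl(\widetilde\TF^n 1_{I_{A^{(n)}}}\bigr) \le C'\int_0^1 |h'_{A^{(n)}}(t)|\,dt = C'\,\mathrm{Leb}(I_{A^{(n)}}),$$
using that $h_{A^{(n)}}$ is monotone, so $\int_0^1|h'_{A^{(n)}}| = |h_{A^{(n)}}(1)-h_{A^{(n)}}(0)|$.

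Finally, since the density $f_\mu$ is bounded below, $\mathrm{Leb}(I_{A^{(n)}}) \le C''\mu(I_{A^{(n)}})$, which gives the claim.

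The only point requiring care is the first step: the equality $v(g)=\int|g'|$ for a $C^1$ function, and the claim that no other branch contributes. The other branches vanish except on a null set of cylinder endpoints, which does not affect $v$ since $v$ is defined through $L^1$ integrals. For $C^1$ functions, $\frac1a\int|g(u+a)-g(u)|\,du\le\int|g'|$ follows from Fubini, with the convention that $g$ is extended by $0$ outside $[0,1]$. That extension could add boundary jumps bounded by $2\sup|g|$. I would control these by noting $\sup G\le C\inf G\le C\,\mathrm{Leb}(I_{A^{(n)}})$, which follows from condition~\eqref{eq:C}. So the boundary terms are also $O(\mu(I_{A^{(n)}}))$, and the estimate holds under either convention for $v$ at the endpoints.
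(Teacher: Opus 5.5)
Your proposal is correct and follows essentially the same route as the paper: the same single-branch formula for $\widetilde\TF^n 1_{I_{A^{(n)}}}$, followed by a pointwise derivative bound from the bounded distortion lemma and the boundedness of $f_\mu'/f_\mu$. The differences are minor. You integrate $|h'_{A^{(n)}}|$ directly to obtain $\mathrm{Leb}(I_{A^{(n)}})$, whereas the paper bounds by $\esssup|h'_{A^{(n)}}|$ and invokes condition~\eqref{eq:C}; and you handle the endpoint convention in the definition of $v$ explicitly, which the paper leaves implicit.
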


\begin{proof}
Since there exists $c>1$ such that $1/c<f_\mu<c$, $\mathrm{Leb}(I_{A^{(n)}})\asymp \mu(I_{A^{(n)}})$.
The interval $I_{A^{(n)}}$ is the image of $h_{A^{(n)}}$ of $[0,1]$ and $h_{A^{(n)}}$ is monotone.
Thus, 
$$\mathrm{Leb}(I_{A^{(n)}})=|h_{A^{(n)}}(1)-h_{A^{(n)}}(0)|=|h_{A^{(n)}}'(t^*)|\quad\text{ for some }t^*\in(0,1).$$

From \eqref{eq:nth.op},
$$\widetilde\TF^n 1_{I_{A^{(n)}}}= \frac{|h_{A^{(n)}}'|f_\mu\circ h_{A^{(n)}}}{f_\mu}=:g.$$
Since $g$ is differentiable, its total variation is $\int |g'|dx$.
Note that $g' = g(\log g)'$.
We have
\begin{align*}
(\log g)' 
& = \frac{|h_{A^{(n)}}''|}{|h_{A^{(n)}}'|}+\frac{f_\mu'\circ h_{A^{(n)}}}{f_\mu \circ h_{A^{(n)}}}h_{A^{(n)}}' - \frac{f_\mu'}{f_\mu}.
\end{align*}
From bounded distortion property in Lemma~\ref{le:bdd.dist} and the estimate $|f_\mu'/f_\mu|\le c'$ for some constant $c'$, we have 
$$|(\log g)'|\lesssim 1+|h_{A^{(n)}}'|.$$

Since $\esssup |g|\lesssim \esssup |h_{A^{(n)}}'|\le 1$, we have
\begin{align*}
v(g)  \le \mathrm{var}(g) = \int |g(\log g)'|dx 
 \lesssim \esssup |g|(1+\operatorname*{ess\,sup} |h_{A^{(n)}}'|) \lesssim \operatorname*{ess\,sup} |h_{A^{(n)}}'|.
\end{align*}
From \eqref{eq:C}, 
$$v(g)\lesssim \esssup|h_{A^{(n)}}'|\lesssim \essinf|h_{A^{(n)}}'|\le \mathrm{Leb}(I_{A^{(n)}})\asymp \mu(I_{A^{(n)}}).$$
\end{proof}

Following the argument of \cite[Proposition~2.1.7]{IK13}, we obtain the following spectral property of $\widetilde \TF$.
\begin{prop} \label{spectralgap}
    The transfer operator $\widetilde\TF : (L^1, \|\cdot\|_1) \to (L^1, \|\cdot\|_1)$ is a bounded operator with spectral radius $1$. It has $1$ as a simple eigenvalue and all other elements of the spectrum of $\widetilde \TF$ have modulus strictly less than $1$.
    In particular, there exists $0<\theta<1$ such that for all $f\in BV_\mu$ and all $n>0$,
    \begin{align*}
        \widetilde \TF^{n}f = \int f d\mu + \widetilde R^{n}(f),
    \end{align*}
where $\|\widetilde R^n\|_{v,\mu} = O(\theta^n)$.
\end{prop}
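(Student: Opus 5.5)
The plan is to deduce the statement from the spectral gap theorem for $\TF$ by conjugating with multiplication by $f_\mu$, as in \cite[Proposition~2.1.7]{IK13}. Write $M_{f_\mu}g=f_\mu g$. Then $\widetilde\TF=M_{f_\mu}^{-1}\TF M_{f_\mu}$ and $\widetilde\TF^n=M_{f_\mu}^{-1}\TF^nM_{f_\mu}$.

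First I check that $M_{f_\mu}$ and $M_{f_\mu}^{-1}$ are bounded isomorphisms of the relevant spaces. By Corollary~\ref{abscontinv}, $f_\mu$ is a smooth rational function on $[0,1]$ with $1/c<f_\mu<c$. Hence multiplication by $f_\mu$ or $1/f_\mu$ is bounded on $L^1$. Because $BV$ is a Banach algebra and both $f_\mu$ and $1/f_\mu$ have bounded variation, these multiplications are also bounded on $BV$. Moreover $\|\cdot\|_{1,\mu}\asymp\|\cdot\|_1$, so $\|\cdot\|_{v,\mu}\asymp\|\cdot\|_v$ and $BV_\mu=BV$ with equivalent norms.

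The spectrum is invariant under similarity. Therefore $\widetilde\TF$ on $L^1$ has the same spectrum as $\TF$, and the spectral gap theorem transfers directly. This gives spectral radius $1$, a simple eigenvalue $1$ with eigenfunction $M_{f_\mu}^{-1}f_\mu=1$, and no other spectrum on the unit circle. The spectral radius is exactly $1$, since $\widetilde\TF 1=1$ and $\int\widetilde\TF g\,d\mu=\int g\,d\mu$, so $\widetilde\TF$ is an $L^1(\mu)$-contraction.

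For the decomposition I would use $\TF^n=\Pi+R^n$ and compute
\[
\widetilde\TF^n g=\frac{1}{f_\mu}\Pi(f_\mu g)+\frac{1}{f_\mu}R^n(f_\mu g)=\int_0^1 g f_\mu\,dt+\frac{1}{f_\mu}R^n(f_\mu g).
\]
The first term equals $\int g\,d\mu$. So I set $\widetilde R^n g:=f_\mu^{-1}R^n(f_\mu g)$. It is consistent to write this as the $n$-th power of $\widetilde R:=M_{f_\mu}^{-1}RM_{f_\mu}$, because $R^n$ is a genuine power of $R$. Then
\[
\|\widetilde R^n g\|_{v,\mu}\lesssim\|R^n(f_\mu g)\|_v\le\|R^n\|_v\,\|f_\mu g\|_v\lesssim\theta^n\|g\|_{v,\mu},
\]
using the norm equivalences above and $\|R^n\|_v=O(\theta^n)$.

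The only point needing care, and the main obstacle, is the bounded variation of $1/f_\mu$ and of $f_\mu g$ in the essential-variation norm. I would handle it with the explicit formula for $f_\mu$: it is $C^1$ and bounded away from $0$ on $[0,1]$. This gives the estimate $v(fg)\le\|f\|_\infty v(g)+\|g\|_\infty v(f)$ together with $\|g\|_\infty\lesssim\|g\|_v$.
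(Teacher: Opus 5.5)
Your conjugation by $M_{f_\mu}$ is the route the paper intends; it gives no proof beyond pointing to \cite[Proposition~2.1.7]{IK13}. Your derivation of $\widetilde\TF^n g=\int g\,d\mu+\widetilde R^n g$, with $\widetilde R^n=M_{f_\mu}^{-1}R^nM_{f_\mu}$ and $\|\widetilde R^n\|_{v,\mu}=O(\theta^n)$, is correct, and it is the only part used later, in Lemma~\ref{le:bdd}. The point you call the main obstacle is harmless, since $1/f_\mu$ is a quadratic polynomial.

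The step that fails is deducing ``no other spectrum on the unit circle'' from similarity on $L^1$. The spectral gap theorem for $\TF$ only says that $1$ is the unique \emph{eigenvalue} of maximal modulus on $L^1$; its actual spectral content is the $BV$-estimate $\|R^n\|_v=O(\theta^n)$. On $L^1$ the claim is in fact false. This defect comes from the statement as written, but your argument asserts it.

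Here is why. From $\int(\widetilde\TF f)\,g\,d\mu=\int f\,(g\circ T)\,d\mu$, the adjoint of $\widetilde\TF$ on $L^1(\mu)$ is the Koopman operator $g\mapsto g\circ T$ on $L^\infty(\mu)$. This is an isometry, and it is not surjective because $T$ is not invertible. So its spectrum, and hence the $L^1$-spectrum of $\widetilde\TF$ (and of $\TF$), is the whole closed unit disk, and $1$ is not even isolated there. On $L^1$ you can legitimately get only spectral radius $1$ (your contraction argument) and a one-dimensional eigenspace for $1$.

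The spectral-gap assertion has to be read on $BV_\mu$, and there your similarity argument goes through verbatim. The identities $\TF f_\mu=f_\mu$ and $\int\TF g\,dx=\int g\,dx$ give $\Pi^2=\Pi$ and $\Pi R=R\Pi=0$. Hence $BV=\mathbb{C}f_\mu\oplus\ker\Pi$, with $\TF=1$ on the first summand and $\TF=R$ on the second. It follows that $\sigma(\TF|_{BV})\subset\{1\}\cup\{|z|\le\theta\}$ with $1$ simple. Conjugating by $M_{f_\mu}$, which you showed is an isomorphism of $BV=BV_\mu$, transfers this to $\widetilde\TF$.
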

The spectral gap of $\TF$ implies that the invariant measure $\mu$ given in Proposition \ref{abscontinv} is exponentially mixing and is the unique absolutely continuous invariant probability measure. 

In what follows, we use these properties to establish Theorem~\ref{thm:1}: Galambos’ theorem for the digits $a_n$.

Due to the inclusion-exclusion principle, we can describe the measure on the left-hand side in Theorem~\ref{thm:1}:
For  $C_0>0$ and $y>0$,
\begin{equation}\label{Inclusionexclusion}
 \mu\{x:\max_{1\le n\le N} a_n(x)\le C_0 Ny\}
= 1+\sum_{k=1}^N(-1)^{k}\sum_{1\le i_1<\cdots<i_k\le N}\mu\{x:\min_{1\le j\le k}a_{i_j}(x)>C_0 Ny\}.
\end{equation}

Denote by $S_{N}$ the set of $x\in[0,1]$ such that $a_1(x)> N$.
We have
$$S_{N} = \left[0,\frac{1}{2N+1}\right]\cup\left[\frac{N}{N+1},1\right].$$
The set $\{x:\min_{1\le j\le k}a_{i_j}(x)>C_0Ny\}$ is the intersection of preimages of $S_{\lfloor C_0Ny\rfloor}=\{x:a_{1}(x)>C_0Ny\}$ under the SCF map $T$.
Since $\mu$ is $\CF$-invariant, information on the mass of $S_N$ and on the (quasi-)independence of the associated events is required.
From now on, set 
$$C_0:=\frac{2}{\log (2+\sqrt{3})}.$$

\begin{lem} \label{le:S_N}
For an integer $N\ge 1$, the $\mu$-measure of $S_{N}$ is
$$\mu(S_{N}) 
=C_0\log \frac{2N+(\sqrt{3}+1)}{2N+(\sqrt{3}-1)}
=C_0\log\left(1+\frac{1}{N+\frac{\sqrt{3}-1}{2}}\right).$$
\end{lem}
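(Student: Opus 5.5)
The plan is a direct computation. I integrate the explicit density of $\mu$ from Corollary~\ref{abscontinv} over the two intervals making up $S_N=\left[0,\frac{1}{2N+1}\right]\cup\left[\frac{N}{N+1},1\right]$, and then use the conjugacy by $\iota$ to see that the two pieces contribute equally.

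\emph{Antiderivative.} Write the density as
$$\frac{C_0}{(1-bx)(1+cx)},\qquad b=2-\sqrt3,\quad c=\sqrt3,\quad b+c=2.$$
Partial fractions give
$$\frac{1}{(1-bx)(1+cx)}=\frac12\left(\frac{b}{1-bx}+\frac{c}{1+cx}\right).$$
Hence a distribution function of $\mu$ is
$$F(x)=\frac{C_0}{2}\log\frac{1+\sqrt3x}{1-(2-\sqrt3)x}.$$
As a check, $F(0)=0$ and $F(1)=\frac{C_0}{2}\log\frac{1+\sqrt3}{\sqrt3-1}=\frac{C_0}{2}\log(2+\sqrt3)=1$, consistent with the normalisation $C_0=2/\log(2+\sqrt3)$.

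\emph{First interval.} Evaluating at the left endpoint piece,
$$F\!\left(\tfrac{1}{2N+1}\right)=\frac{C_0}{2}\log\frac{2N+1+\sqrt3}{2N+1-2+\sqrt3}=\frac{C_0}{2}\log\frac{2N+(\sqrt3+1)}{2N+(\sqrt3-1)}.$$

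\emph{Second interval.} Its mass is
$$1-F\!\left(\tfrac{N}{N+1}\right)=\frac{C_0}{2}\left[\log(2+\sqrt3)-\log\frac{N+1+\sqrt3N}{1+(\sqrt3-1)N}\right]=\frac{C_0}{2}\log\frac{(2+\sqrt3)\bigl(1+(\sqrt3-1)N\bigr)}{(\sqrt3+1)N+1}.$$
Multiplying numerator and denominator by $\sqrt3-1$, and using $(2+\sqrt3)(\sqrt3-1)=\sqrt3+1$ and $(\sqrt3+1)(\sqrt3-1)=2$, this ratio becomes $\frac{2N+\sqrt3+1}{2N+\sqrt3-1}$. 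So the second interval has the same mass as the first.

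This equality is expected: $\iota$ commutes with $T$, preserves $\mu$, and maps $\left[0,\frac{1}{2N+1}\right]$ onto $\left[\frac{N}{N+1},1\right]$. One could cite that symmetry instead of doing the second computation.

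\emph{Conclusion.} Adding the two pieces gives
$$\mu(S_N)=C_0\log\frac{2N+(\sqrt3+1)}{2N+(\sqrt3-1)}.$$
Dividing numerator and denominator by $2$ rewrites this as the second form in the statement. The only step needing care is the algebraic simplification with $\sqrt3-1$; there is no real obstacle.
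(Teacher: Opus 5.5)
Your proposal is correct and follows essentially the same route as the paper: integrate the explicit density of $\mu$ over $\left[0,\frac{1}{2N+1}\right]$ and $\left[\frac{N}{N+1},1\right]$ separately and add the two equal contributions. The only difference is that the paper substitutes $t=1/x$ before taking partial fractions, while you take partial fractions in $x$ directly; your remark that $\iota$ swaps the two intervals and preserves $\mu$ is a valid consistency check but is not needed.
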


\begin{proof}
By the change of variable $t=\frac{1}{x}$, 
we have
\begin{align*}
 \int_a^b d\mu =\frac{1}{\log(2+\sqrt{3})} \int_{1/b}^{1/a}\frac{1}{t-(2-\sqrt{3})}-\frac{1}{t+\sqrt{3}}dt  =\frac{C_0}{2} \left.\log\frac{t-(2-\sqrt{3})}{t+\sqrt{3}}\right]^{1/b}_{1/a}
\end{align*}
Applying the above identity with
\((a,b)=\bigl(0,\tfrac{1}{2N+1}\bigr)\) and
\((a,b)=\bigl(\tfrac{N}{N+1},1\bigr)\), respectively, yields
$$
\mu(S_N)= \int_0^{\frac{1}{2N+1}}d\mu +\int_{\frac{N}{N+1}}^1d\mu=C_0\log\frac{2N+(\sqrt{3}+1)}{2N+(\sqrt{3}-1)},
$$
which completes the proof.
\end{proof}

Next, we control correlations of the events ${a_n \ge L}$ using the spectral gap.

\begin{lem}\label{le:bdd}
There are constants $C_1>0$ and $\theta \in (0, 1)$ such that for $k\ge2$, $L\ge 1$ and $i_{1} < \cdots<i_{k}$, 
\begin{align} \label{eq5.3}
    \mu\left\{\min_{1\le j \le k} a_{i_{j}} > L \right\} 
    \le \prod_{j=1}^{k-1}\left( 1 + C_1\theta^{i_{j+1}-i_{j}} \right)\prod_{j=1}^{k} \mu\left\{a_{i_{j}}> L\right\}.
\end{align}
\end{lem}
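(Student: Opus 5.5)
We prove the bound by induction on $k$, in the spirit of the Galambos/Philipp argument for the Gauss map and \cite[Proposition~2.1.7]{IK13}. The key input is a one-step decoupling estimate: for any event $E$ that is a union of cylinders of some length $m$, any $n\ge m$ and any $L\ge1$,
\begin{equation*}
\mu\bigl(E\cap T^{-n}S_L\bigr)\le \bigl(1+C_1\theta^{\,n-m}\bigr)\mu(E)\mu(S_L),
\end{equation*}
where $S_L=\{a_1>L\}$. Apply this with $E=\{\min_{1\le j\le k-1}a_{i_j}>L\}$. By shift invariance we may take $i_1=1$, so $E$ is a union of cylinders of length $m=i_{k-1}$, and $n=i_k-1$. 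Then $n-m+1=i_k-i_{k-1}$, which after renaming $\theta$ gives a factor $(1+C_1\theta^{i_k-i_{k-1}})$. Invariance of $\mu$ gives $\mu\{a_{i_j}>L\}=\mu(S_L)$, and induction on $k$ finishes the proof.

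\textbf{Proof of the decoupling estimate.} Write $E$ as a disjoint union of cylinders $I_{A^{(m)}}$ and use duality for the normalized operator:
\[
\mu\bigl(I_{A^{(m)}}\cap T^{-n}S_L\bigr)=\int \widetilde\TF^{\,n}\bigl(1_{I_{A^{(m)}}}\bigr)\,1_{S_L}\,d\mu .
\]
We then write $\widetilde\TF^{\,n}=\widetilde\TF^{\,n-m}\widetilde\TF^{\,m}$. Set $g=\widetilde\TF^{\,m}1_{I_{A^{(m)}}}$. By Lemma~\ref{le:v.mu}, $v(g)=O(\mu(I_{A^{(m)}}))$. Also $\int g\,d\mu=\mu(I_{A^{(m)}})$, so $\|g\|_{v,\mu}=O(\mu(I_{A^{(m)}}))$.

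Proposition~\ref{spectralgap} then gives
\[
\widetilde\TF^{\,n-m}g=\mu(I_{A^{(m)}})+\widetilde R^{\,n-m}g,\qquad \|\widetilde R^{\,n-m}g\|_{v,\mu}\le C\theta^{\,n-m}\mu(I_{A^{(m)}}).
\]
To pass from this to a pointwise error, we use $\|h\|_\infty\lesssim\|h\|_{v,\mu}$ on $BV_\mu$. This holds for bounded essential variation because some version satisfies $\sup|h|\le v(h)+\|h\|_1$, and $\mu$ is comparable to Lebesgue measure. Hence $\|\widetilde R^{\,n-m}g\|_\infty\le C'\theta^{\,n-m}\mu(I_{A^{(m)}})$. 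Integrating against $1_{S_L}$ gives
\[
\mu\bigl(I_{A^{(m)}}\cap T^{-n}S_L\bigr)\le \mu(I_{A^{(m)}})\mu(S_L)\bigl(1+C'\theta^{\,n-m}\bigr).
\]
Summing over the cylinders in $E$ yields the decoupling estimate.

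\textbf{Main obstacle.} The delicate point is that Lemma~\ref{le:v.mu} bounds $v$ only for a single cylinder; a general union of many cylinders could have large variation. This is why we work cylinder by cylinder and sum afterwards. The summation is legitimate because the error for each cylinder is proportional to its own mass, $\mu(I_{A^{(m)}})$, so the errors add up to $C'\theta^{\,n-m}\mu(E)\mu(S_L)$.

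A second subtlety is the case $n=m$, i.e.\ adjacent indices with $i_k-i_{k-1}=1$, where the exponential error term gives no decay. Here we need a uniform constant bound rather than smallness. Since $g$ is then a function of bounded distortion, with $\|g\|_\infty\lesssim\mu(I_{A^{(m)}})$ by (C) and the bounds on $f_\mu$, we get
\[
\int g\,1_{S_L}\,d\mu\le C\mu(I_{A^{(m)}})\mu(S_L),
\]
and choosing $C_1$ large enough absorbs this case into the stated bound.
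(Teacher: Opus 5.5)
Your proof is correct and uses the same ingredients as the paper: duality for $\widetilde{\TF}$, Lemma~\ref{le:v.mu}, Proposition~\ref{spectralgap}, and $\|\cdot\|_\infty\lesssim\|\cdot\|_{v,\mu}$. The difference is that you run the induction in the opposite direction.

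The paper peels off the \emph{earliest} index. It places $\widetilde{\TF}1_{S_N}$ in the $BV$ slot and $1_U$ in the $L^1$ slot, where $U$ is an arbitrary Borel set. This gives $\mu(S_N\cap T^{-n}U)=\mu(S_N)\mu(U)(1+O(\theta^{n}))$. The paper then applies this with $n=i_2-i_1$ and $U=S_N\cap\bigcap_{j\ge 3}T^{-(i_j-i_2)}S_N$, and repeats.

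You peel off the \emph{latest} index. You send the whole past event $E$ through $\widetilde{\TF}^{m}$ with $m=i_{k-1}$, and keep only $S_L$ in $L^1$.

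The paper's arrangement is lighter. It needs Lemma~\ref{le:v.mu} only for cylinders of length one, and the set $U$ never has to be decomposed. Your arrangement needs Lemma~\ref{le:v.mu} for cylinders of arbitrary length, hence bounded distortion uniform in $m$ via \eqref{eq:C}. In exchange, it proves the stronger $\psi$-mixing estimate $\mu(E\cap T^{-n}F)\le(1+C\theta^{\,n-m})\mu(E)\mu(F)$ for every union $E$ of length-$m$ cylinders and every Borel $F$. That is precisely the form the paper needs afterwards for $U_M$.

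Two minor remarks:
\begin{itemize}
\item The obstacle you flag about unions is not a real one. We have $\widetilde{\TF}^m1_E=\sum_{A}\widetilde{\TF}^m1_{I_{A}}$, and the essential variation is subadditive along $L^1$-convergent sums; this is how $\|\widetilde{\TF}1_{S_N}\|_{v,\mu}=O(\mu(S_N))$ is obtained. Your cylinder-by-cylinder summation is nonetheless equally valid.
\item The extra factor $\theta^{-1}$, coming from $n-m=i_k-i_{k-1}-1$, is absorbed by enlarging $C_1$, not by renaming $\theta$.
\end{itemize}
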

\begin{proof}
    For $N>1$, $\left\{\min_{1\le j \le k} a_{i_{j}} > N \right\} = \bigcap_{j=1}^{k} \CF^{-i_{j}+1}S_{N}$.
    Put $N = \lfloor L \rfloor$.

By H\"older inequality and $\|g\|_\infty\le \|g\|_{v,\mu}$ (see \cite[Proposition 2.0.1 (ii)]{IK13}), we have
$$\left|\int f\cdot g d\mu\right| \le \|f\|_1\|g\|_\infty\le \|f\|_1\|g\|_{v,\mu} \qquad\text{ for }f\in L^1, g\in BV_\mu.$$
From Proposition~\ref{spectralgap}, 
$$\left|\int f \cdot \widetilde{\TF}^n g d\mu - \int fd\mu \int g d\mu\right| = \left|\int f \cdot \widetilde R^n g d\mu\right| 
\le \|f\|_1\|\widetilde R^n g\|_{v,\mu}
\le \|f\|_1\|g\|_{v,\mu}O(\theta^n).$$
For a Borel set $U\subset [0,1]$, let $f = 1_U$ and $g = \widetilde \TF 1_{S_N}$.
Applying the above inequality, we have
\begin{equation}\label{eq:mixing1}|\mu(S_N\cap T^{-n}U)-\mu(U)\mu(S_N)| = \mu(U)\|\widetilde\TF 1_{S_N}\|_{v,\mu}O(\theta^{n}).
\end{equation}
From Lemma~\ref{le:v.mu}, $\|\widetilde\TF 1_{S_N}\|_{v,\mu} = O(\mu(S_N))$. 
Combining this with \eqref{eq:mixing1},
we have
$$\mu(S_N\cap T^{-n}U) = \mu(U)\mu(S_N)(1+O(\theta^n)).$$

Applying the estimate with $n=i_2-i_1$ and $U = \CF^{-i_{3}+i_{2}}S_{N} \cap \cdots \cap \CF^{-i_{k}+i_{2}} S_{N}$, we obtain
    \begin{equation}\label{eq:S_ind}\begin{split}
        \mu\left(\bigcap_{j=1}^{k} \CF^{-i_{j}+1}S_{N}\right)
        =&\mu\left(S_{N}\cap \bigcap_{j=2}^k \CF^{-i_{j}+i_{1}}S_{N}\right)
        = \mu \Bigg(S_N \cap T^{-i_2+i_1}\Big(S_N\cap  \bigcap_{j=3}^k T^{-i_j+i_2}S_N\Big)\Bigg)\\
        =&\mu(S_{N} \cap \CF^{-i_{3}+i_{2}}S_{N}\cap\cdots \cap \CF^{-i_{k}+i_{2}} S_{N})\mu(S_N)\left( 1+ O\left(\theta^{i_{2}-i_{1}}\right)\right)%\\
    \end{split}\end{equation}
Repeating this argument inductively yields \eqref{eq5.3}.
\end{proof}

\begin{proof}[Proof of Theorem~\ref{thm:1}]
We follow the strategy in \cite{Pol09}.
Let $y>0.$
Near $x=0$, $\log(1+x)=x+ O(x^2)$.
Thus,
$$C_0Ny \log\left(1+\frac{1}{C_0Ny+\frac{\sqrt{3}-1}{2}}\right) = \frac{1}{1+\frac{\sqrt{3}-1}{2C_0Ny}}+O\left(\frac{1}{N}\right) = 1+O\left(\frac{1}{N}\right).$$
Combining this with Lemma~\ref{le:S_N} and Lemma~\ref{le:bdd}, we have
\begin{align}\label{eq:minbdd}
    \mu\{x:\min_{1\le j\le k}a_{i_j}(x)>C_0Ny\} 
    \le \prod_{j=1}^{k-1}\left( 1 + C_1\theta^{i_{j+1}-i_{j}} \right)
    \frac{1}{(Ny)^k}\left(1+O\left(\frac{1}{N}\right)\right)^{k}.
\end{align}

Take $M<N$. The main term in Equation~\eqref{Inclusionexclusion} is the sum of the summands with `coarse partition' of $[1,N]$:
\begin{equation}\label{eq:coa.part}
1\le i_1<\cdots<i_k\le N\quad\text{ such that }\quad i_{j+1}-i_j\ge m
\end{equation}
with large enough $m$ and $1\le k\le M$.
For such $i_j$, from \eqref{eq:minbdd},
\begin{align} \label{eq:est1} 
    \mu\{x:\min_{1\le j\le k}a_{i_j}(x)>C_0Ny\}=
   \frac{1}{(Ny)^k}\left(1+O\left(\frac{1}{N}\right)\right)^{k} (1+ O(\theta^{m}))^{k-1}.
\end{align}
The number of tuples $(i_1,\cdots,i_k)$ satisfying \eqref{eq:coa.part} is
${{N-(m-1)(k-1)}\choose k}$.
Thus, the sum of \eqref{eq:est1} over the partitions $(i_1,\cdots,i_k)$ as in \eqref{eq:coa.part} equals
\begin{align*}
    \frac{{{N-(m-1)(k-1)}\choose{k}}}{(Ny)^{k}}\left(1 + O\left(\frac{1}{N}\right)\right)^{k}(1+ O(\theta^{m}))^{k-1}
    =\frac{1}{k!y^k}\left( 1+ O_{m,k}\left(\frac{1}{N}\right)\right)^{2k}(1+O(\theta^{m}))^{k-1}.
\end{align*}
Here, $1-C\theta^{m} \le 1+ O(\theta^{m}) \le 1+ C\theta^{m}$ for every $m$ due to (\ref{eq5.3}).
Taking the sum over all $1\le k\le M$, 
the right-hand side of Lemma~\ref{Inclusionexclusion} is %we obtain
\begin{align*}
    1+\sum_{k=1}^{M}\frac{(-1)^{k}}{k!y^k}\left( 1+ O_{m,k}\left(\frac{1}{N}\right)\right)^{2k}(1+O(\theta^{m}))^{k-1},
\end{align*}
which converges to $$\exp\left(-1/y\right)$$ by letting $N$ tend to $\infty$, and then letting $M$ and $m$ tend to infinity.

Therefore, it suffices to show that the sum of the remaining terms vanishes as $N \to \infty$ and $M \to\infty$.
We divide it into two cases.

(1) First, we consider $M < k \le N$.
From \eqref{eq:minbdd}, 
$$\sum_{1\le i_1<\cdots<i_k\le N}\mu\{x:\min_{1\le j\le k}a_{i_j}(x)>C_0Ny\}\le \begin{pmatrix}N\\k\end{pmatrix} \frac{( 1 + C_1 )^{k-1}}{(Ny)^k}\left(1+O\left(\frac{1}{N}\right)\right)^k\le \frac{1}{k!}\left(\frac{C'}{y}\right)^k,$$
where $C'$ is a uniform upper bound for $(1+C_1)(1+O(1/N))$.
The sum of the right-hand side over $M<k\le N$ converges when $N\to\infty$. 
By taking sufficiently large $M$, the sum can be arbitrarily small.

(2) In the other case, $1\le k \le M$ but $i_{j+1}-i_{j}<m$ for some $1\le j \le k-1$, the sum is bounded by 
\begin{align*}
    \frac{(1+C_1)^{k-1}}{(Ny)^{k}}\left({N\choose k}-{{N-(m-1)(k-1)}\choose k}\right)\left(1+O\left(\frac{1}{N}\right)\right)^{k}
\end{align*}
Since $\frac{1}{N^k}\left({N\choose k}-{{N-(m-1)(k-1)}\choose k}\right) = O_{m,k}(\frac1N)(1+O_{m,k}(\frac1N))^{k-1}$,
the sum vanishes as $N \to \infty$.
\end{proof}

After establishing the extreme value theorem for regular continued fractions with respect to the Gauss measure \cite{Gal72}, Galambos extended the result to all absolutely continuous measures \cite{Gal73}.
Following a similar approach, we prove the extreme value theorem in our setting for arbitrary absolutely continuous measures as well.

\begin{lem}
For $M\ge 1$ and $N\ge 1$,
let $U_{M}:= \left\{\max_{1\le i\le M}a_i\le C_0My\right\}$ and $V_{i}^{N} := \{a_i> C_0Ny\}$.
There are constants $C_2>0$ and $\theta\in (0,1)$ such that for $k\ge 2$ and $M = i_0< i_1<\dots<i_k$, 
$$\mu(U_M\cap V_{i_1}^N\cap V_{i_2}^N\cap \dots \cap V_{i_k}^N)\le \mu(U_M)\prod_{j=0}^{k-1}(1+C_2\theta^{i_{j+1}-i_j})\prod_{j=1}^k\mu(V_{i_j}^N).$$
\end{lem}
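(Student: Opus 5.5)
The plan is to reuse the mixing argument of Lemma~\ref{le:bdd}, peeling off the events $V_{i_j}^N$ one at a time from the far end and treating $U_M$ as the last set of the chain. Write $V_{i}^N=\CF^{-(i-1)}S_{L}$ with $L=\lfloor C_0Ny\rfloor$. The key estimate to reproduce is the one in the proof of Lemma~\ref{le:bdd}: for a Borel set $U$ and a set $W$ whose indicator (after one application of $\widetilde\TF$) has controlled variation,
$$\mu(W\cap \CF^{-n}U)=\mu(W)\mu(U)\bigl(1+O(\theta^n)\bigr).$$
That estimate was obtained from Proposition~\ref{spectralgap} with $f=1_U$ and $g=\widetilde\TF 1_W$, together with $\|\widetilde\TF 1_W\|_{v,\mu}=O(\mu(W))$. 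Here the role of $W$ is played by $U_M$, which is \emph{not} a single cylinder of the form $S_N$. So the main step is to verify the analogue of Lemma~\ref{le:v.mu} for $W=U_M$, after applying $\widetilde\TF^M$ rather than $\widetilde\TF$.

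First I will rewrite the left-hand side as
$$\mu\Bigl(U_M\cap \CF^{-M}\bigl(V^N_{i_1-M}\cap\cdots\cap V^N_{i_k-M}\bigr)\Bigr).$$
This uses the observation that $V_i^N=\CF^{-M}V^N_{i-M}$ for $i>M$, since $U_M$ depends only on the first $M$ digits. By $T$-invariance and Lemma~\ref{le:bdd} (or its proof), the inner intersection $U'$ has measure at most $\prod_{j=1}^{k-1}(1+C_1\theta^{i_{j+1}-i_j})\prod_{j}\mu(V^N_{i_j})$. It therefore suffices to show
$$\mu(U_M\cap \CF^{-M}\CF^{-(i_1-M-1)}U'')\le \mu(U_M)\,\mu(U'')\,(1+C_2\theta^{i_1-M}),$$
where $U''=S_L\cap\cdots$ is the shifted event. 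The exponent is $i_1-i_0$ with $i_0=M$, which accounts for the extra factor $j=0$ in the product.

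To get this, apply Proposition~\ref{spectralgap} with $f=1_{U''}$, pushed forward, and $g=\widetilde\TF^{M}1_{U_M}$; note that $\int 1_{U_M}\,(1_{U''}\circ \CF^{M+n})\,d\mu=\int \widetilde\TF^{M}1_{U_M}\cdot 1_{U''}\circ\CF^{n}\,d\mu$. Then
$$\Bigl|\int \widetilde\TF^{n}(\widetilde\TF^{M}1_{U_M})\,1_{U''}\,d\mu-\mu(U_M)\mu(U'')\Bigr|\le \mu(U'')\,\|\widetilde\TF^{M}1_{U_M}\|_{v,\mu}\,O(\theta^n),$$
with $n=i_1-M-1$. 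The one power of $\theta$ lost is absorbed into $C_2$.

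The main obstacle is to show $\|\widetilde\TF^{M}1_{U_M}\|_{v,\mu}=O(\mu(U_M))$ uniformly in $M$. Here $U_M$ is a (countable) union of $M$-cylinders $I_{A^{(M)}}$ with all digits at most $C_0My$. Since $\widetilde\TF^M 1_{U_M}=\sum_{A^{(M)}\subset U_M}\widetilde\TF^M1_{I_{A^{(M)}}}$, Lemma~\ref{le:v.mu} applied termwise gives
$$v(\widetilde\TF^{M}1_{U_M})\le\sum v(\widetilde\TF^M1_{I_{A^{(M)}}})=O\Bigl(\sum\mu(I_{A^{(M)}})\Bigr)=O(\mu(U_M)).$$
The implied constant in Lemma~\ref{le:v.mu} is uniform in $n$ and $A^{(n)}$, because it comes only from the bounded distortion Lemma~\ref{le:bdd.dist} and condition \eqref{eq:C}. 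The $L^1(\mu)$ part is exactly $\mu(U_M)$. I expect this uniformity, which needs a check that the constants in Lemma~\ref{le:v.mu} do not depend on $n$, to be the only delicate point. Combining the two displayed bounds then gives the lemma.
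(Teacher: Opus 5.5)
Your proposal is correct and is essentially the paper's proof: write $U_M$ as a union of $M$-cylinders, apply Lemma~\ref{le:v.mu} termwise to get $v(\widetilde\TF^M 1_{U_M})=O(\mu(U_M))$, decouple $U_M$ from the later events via the spectral gap with gap $i_1-M-1$ (the lost power of $\theta$ is absorbed into $C_2$), and finish with Lemma~\ref{le:bdd}. The uniformity you flag does hold: the constants in Lemma~\ref{le:v.mu} come only from Lemma~\ref{le:bdd.dist}, \eqref{eq:C} and the two-sided bounds on $f_\mu$, and $U_M$ is in fact a finite union of cylinders because its digits are bounded by $C_0My$.
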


\begin{proof}
Let $g = 1_{U_M}$.
We denote by 
$$A^{(M)} = ((a_1,\veps_1)_{s_1},\dots,(a_M,\veps_M)_{s_M})\in\mathscr A^M.$$
We have 
$$U_M = \bigcap_{i=1}^M\{a\le C_0My\} = \bigcap_{i=1}^M \bigcup_{a\le C_0My} T^{-(i-1)}I_{(a,\veps)_{s}} = \bigcup_{A^{(M)}\in\mathscr A^M\text{ s.t. }a_i\le C_0My,\;\forall i}I_{A^{(M)}}.$$
From \eqref{eq:nth.op},
$$\widetilde\TF^M g = \frac{1}{f_\mu}\sum_{a_i\le C_0My,\;\forall i}|h_{A^{(M)}}'|(f_\mu\circ h_{A^{(M)}}).$$
From Lemma~\ref{le:v.mu}, $v(\widetilde \TF^M g) = O(\mu(U_M))$.

By a similar argument in \eqref{eq:mixing1}, for $n>M$ and for a Borel set $U'\subset[0,1]$, we have
$$\mu(U_M\cap T^{-n}U') = \mu(U_M)\mu(U')(1+O(\theta^{n-M})).$$
By applying the estimate with $n=i_1-1$ and $U' = S_{\lfloor C_0Ny\rfloor}\cap T^{-i_2+i_1}S_{\lfloor C_0Ny\rfloor}\cap\cdots\cap T^{-i_k+i_1}S_{\lfloor C_0Ny\rfloor}$, we obtain
$$\mu(U_M\cap V_{i_1}^N\cap V_{i_2}^N\cap \dots \cap V_{i_k}^N) = \mu(U_M)\mu\Big(S_{\lfloor C_0Ny\rfloor}\cap \bigcap_{j=2}^k T^{-i_j+i_1}S_{\lfloor C_0Ny\rfloor}\Big)(1+O(\theta^{i_1-M-1})).$$
From Lemma~\ref{le:bdd}, we have the conclusion. 
\end{proof}

\begin{coro}\label{co:Ren}
%Let $L_1 = C_0Ny$ and $L_2=C_0My$.
For any $M\ge 1$, we have
$$\lim_{N\to \infty}\mu(U_N\cap U_M) = \exp\left(-\frac{1}{y}\right)\mu(U_M).$$
\end{coro}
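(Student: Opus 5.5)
We mimic the proof of Theorem~\ref{thm:1}, now with every event intersected with the fixed set $U_M$, and use the preceding lemma in place of Lemma~\ref{le:bdd}. Fix $M$ and let $N\to\infty$. First we remove the first $M$ digits. Set $U_N' := \{\max_{M< i\le N} a_i\le C_0Ny\}$. For $N$ large we have $C_0Ny\ge C_0My$, so on $U_M$ the condition $\max_{1\le i\le M}a_i\le C_0Ny$ holds automatically. Hence $U_N\cap U_M = U_M\cap U_N'$, and this reduction is exact.

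By inclusion--exclusion as in \eqref{Inclusionexclusion},
$$\mu(U_M\cap U_N')=\mu(U_M)+\sum_{k=1}^{N-M}(-1)^k\sum_{M<i_1<\cdots<i_k\le N}\mu\bigl(U_M\cap V_{i_1}^N\cap\cdots\cap V_{i_k}^N\bigr).$$
We split the index tuples exactly as in the proof of Theorem~\ref{thm:1}. For the coarse tuples we require $k\le K$, $i_1-M\ge m$ and $i_{j+1}-i_j\ge m$. For these, the preceding lemma gives the upper bound $\mu(U_M)(1+C_2\theta^m)^{k}\prod_j\mu(V_{i_j}^N)$. We also need a matching lower bound $\mu(U_M)(1-C\theta^m)^k\prod_j\mu(V_{i_j}^N)$. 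This follows from the same mixing argument, since the estimate $\mu(U_M\cap T^{-n}U')=\mu(U_M)\mu(U')(1+O(\theta^{n-M}))$ established in that proof is two-sided, and the two-sided version of \eqref{eq:S_ind} applies. By Lemma~\ref{le:S_N}, $\mu(V_i^N)=\frac{1}{Ny}(1+O(1/N))$. The number of coarse tuples is $\binom{N-M-(m-1)k}{k}=\frac{N^k}{k!}(1+O_{m,k,M}(1/N))$. So the coarse part contributes
$$\mu(U_M)\sum_{k=1}^K\frac{(-1)^k}{k!\,y^k}(1+O(1/N))^{2k}(1+O(\theta^m))^{k},$$
which tends to $\mu(U_M)\bigl(e^{-1/y}+\text{small}\bigr)$ as $N\to\infty$ and then $K,m\to\infty$.

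Next we bound the error terms using the upper bound of the preceding lemma, with each factor $1+C_2\theta^{\cdot}$ bounded by $1+C_2$. There are two cases.

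\textbf{Case 1: $k>K$.} The total is at most $\mu(U_M)\sum_{k>K}\frac{1}{k!}(C'/y)^k$, which is uniformly small once $K$ is large.

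\textbf{Case 2: $k\le K$ but some gap is less than $m$.} Here some gap $i_{j+1}-i_j$, or the first gap $i_1-M$, is less than $m$. The number of such tuples is at most $\binom{N}{k}-\binom{N-M-(m-1)k}{k}=O_{m,k,M}(N^{k-1})$. Since each term carries the factor $(Ny)^{-k}$, this contribution is $O(1/N)$ for fixed $K,m,M$.

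Combining the coarse part with the two error estimates gives $\lim_{N\to\infty}\mu(U_N\cap U_M)=e^{-1/y}\mu(U_M)$.

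The main obstacle is the lower bound for the coarse terms: the preceding lemma as stated is only an upper bound. The plan is to rerun its proof while retaining the two-sided $1+O(\theta^{\cdot})$ error, which comes from Lemma~\ref{le:v.mu} and Proposition~\ref{spectralgap}. Alternatively, if $\mu(U_M)=0$ the statement is trivial, and otherwise one can apply the exact identity $\int 1_{U'}\,\widetilde\TF^{n}g\,d\mu=\mu(U')\mu(U_M)+O(\mu(U')\mu(U_M)\theta^{n-M})$ directly, as in \eqref{eq:mixing1}.
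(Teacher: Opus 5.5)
Your proposal is correct and follows the paper's proof: inclusion--exclusion over the indices $M<i\le N$ with $U_M$ carried along, the two-sided mixing estimate $\mu(U_M\cap T^{-n}U')=\mu(U_M)\mu(U')(1+O(\theta^{n-M}))$ for the well-separated tuples, and the crude upper bound of the preceding lemma for the tail $k>K$ and for tuples with a small gap. You are slightly more careful than the paper in three places: you make the reduction $U_N\cap U_M=U_M\cap\{\max_{M<i\le N}a_i\le C_0Ny\}$ explicit, you include the first gap $i_1-M$ in the separation condition, and you point out that the coarse terms need the two-sided (not merely upper) estimate, which the paper uses only implicitly.
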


\begin{proof}
By the inclusion-exclusion principle,
$$\mu(U_N\cap U_M) = \mu(U_M) + \sum_{k=1}^{N-M}(-1)^k\sum_{M=i_0<i_1<\dots<i_k\le N}\mu(U_M\cap V_{i_1}^N\cap\dots\cap V_{i_k}^N).$$
From a similar argument to the proof of Theorem~\ref{thm:1}, we have the conclusion.
More precisely,
first, we have
$$\mu(U_M\cap V_{i_1}\cap\dots\cap V_{i_k})\le \mu(U_M)\prod_{j=0}^{k-1}(1+C_2\theta^{i_{j+1}-i_j})\frac{1}{(Ny)^k}\left(1+O\left(\frac{1}{N}\right)\right)^k.$$
Thus, for $M=i_0<i_1<\dots<i_k\le N$ such that $i_{j+1}-i_j\ge m$ for all $1\le j\le k$, we have
$$\mu(U_M\cap V_{i_1}\cap\dots\cap V_{i_k}) = \mu(U_M)\frac{1}{(Ny)^k}\left(1+O\left(\frac{1}{N}\right)\right)^k(1+O(\theta^m))^{k},$$
On the other hand, for $M=i_0<i_1<\dots<i_k\le N$ such that $i_{j+1}-i_j< m$ for some $1\le j\le k$, 
the sum $\mu(U_M\cap V_{i_1}\cap\dots\cap V_{i_k})$ over such tuples $(i_1,\dots,i_k)$ is product of $\mu(U_M)$ and a vanishing factor.
Therefore, we have the conclusion.
\end{proof}

\begin{thm}\label{th:Gal2}
For an absolutely continuous measure $\mu'$ on $(0,1)$ and for all $y>0$, we have
$$\lim_{N\to \infty}\mu'\left\{x\in(0,1):\max_{1\le n\le N}a_n(x)\le C_0 N y\right\}=\exp\left(-\frac{1}{y}\right).$$
\end{thm}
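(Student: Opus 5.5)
The plan is to follow Galambos' original argument for arbitrary absolutely continuous measures. I first reduce to measures with density bounded by a multiple of $f_\mu$, then approximate the density by step functions built from cylinder sets, and finally use Corollary~\ref{co:Ren} together with a shift in the index to handle each cylinder.

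\textbf{Reduction.} Write $d\mu' = \rho\, d\mu$ with $\rho\in L^1(\mu)$, $\rho\ge0$, $\int\rho\,d\mu=1$. This is possible because $f_\mu$ is bounded above and below, so $\mu'\ll\mu$. Since both sides of the claimed identity are bounded by $1$, it is enough to prove the statement for every $\rho$ in an $L^1(\mu)$-dense class. An $L^1$-error of size $\delta$ in $\rho$ changes $\mu'(U_N)$ by at most $\delta$, uniformly in $N$.

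\textbf{Dense class.} Cylinders $I_{A^{(M)}}$ generate the Borel $\sigma$-algebra, because their lengths tend to $0$ uniformly by $(\mathrm{E}_m)$. Hence finite linear combinations of indicators of cylinders of a common length $M$ are dense in $L^1(\mu)$. By linearity it therefore suffices to show, for a fixed cylinder $I=I_{A^{(M)}}$,
$$\lim_{N\to\infty}\mu(U_N\cap I)=e^{-1/y}\mu(I).$$

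\textbf{Cylinder estimate.} Corollary~\ref{co:Ren} gives this with $U_M$ in place of $I$. The same proof applies to $I$, since the only input used about $U_M$ was that $1_{U_M}$ is a union of $M$-cylinders. Lemma~\ref{le:v.mu} then gives $v(\widetilde\TF^M 1_I)=O(\mu(I))$, which yields the decorrelation
$$\mu(I\cap T^{-n}U')=\mu(I)\mu(U')\bigl(1+O(\theta^{n-M})\bigr).$$
Running the preceding lemma and the inclusion--exclusion argument with $U_M$ replaced by $I$ produces
$$\lim_{N\to\infty}\mu\Bigl(I\cap\bigl\{\max_{M<n\le N}a_n\le C_0Ny\bigr\}\Bigr)=e^{-1/y}\mu(I).$$
The events $\{a_n> C_0Ny\}$ for $n\le M$ are determined by the digits of $A^{(M)}$, which are fixed. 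They are therefore empty on $I$ once $C_0Ny>\max_i a_i$. So for $N$ large the constraint over $1\le n\le M$ is automatic on $I$, and the display above becomes $\mu(I\cap U_N)$.

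The decorrelation step is where the spectral gap enters, and I expect it to need the most care. One must check that the error terms in the inclusion--exclusion sum remain uniform after the index shift by $M$. This is exactly the computation in Corollary~\ref{co:Ren}, with the range of $k$ now $1\le k\le N-M$ instead of $1\le k\le N$. The change only adds $O_M(1/N)$ corrections to the binomial counts, so the limit $e^{-1/y}$ is unaffected.

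Combining the three steps: the cylinder estimate holds for every $M$-cylinder, so by linearity it holds for step densities. The reduction then extends it to all $\mu'\ll\mathrm{Leb}$ by an $\varepsilon/3$ argument.
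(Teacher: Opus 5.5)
Your proposal is correct, but it takes a different route from the paper.

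\textbf{The paper's route.} The paper stops at Corollary~\ref{co:Ren}, i.e.\ $\mu(U_N\cap U_M)\to e^{-1/y}\mu(U_M)$ for each fixed $M$. It then cites R\'enyi's theory of stable sequences of events. His criterion upgrades this to $\mu(U_N\cap B)\to e^{-1/y}\mu(B)$ for every measurable $B$; the mechanism is an $L^2$ argument, since $1_{U_N}-e^{-1/y}$ lies in the closed span of $\{1,1_{U_M}\}$, so testing against that family is enough. His transfer theorem then passes the result to any $\mu'\ll\mu$.

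\textbf{Your route.} You test against the generating family of cylinders instead of the sets $U_M$. You rerun the decorrelation and inclusion--exclusion argument with a single cylinder $I$ in place of $U_M$. This is, if anything, slightly cleaner: Lemma~\ref{le:v.mu} applies to $I$ directly, whereas the paper must implicitly sum it over the countably many cylinders making up $U_M$. You then observe that the first $M$ digits are frozen on $I$, and finish by $L^1(\mu)$-density of cylinder step functions.

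\textbf{Trade-off.} Your argument is self-contained and avoids the abstract R\'enyi machinery, at the cost of repeating the inclusion--exclusion bookkeeping and adding an approximation step. The paper's argument is shorter because it outsources exactly that approximation step.

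\textbf{One point to tidy.} Your dense class contains signed and unnormalized step functions, so state the reduction linearly: $\int_{U_N}\rho\,d\mu\to e^{-1/y}\int\rho\,d\mu$ for all $\rho\in L^1(\mu)$. The estimate that justifies passing to the limit is the uniform bound $\bigl|\int_{U_N}(\rho-\rho')\,d\mu\bigr|\le\|\rho-\rho'\|_{L^1(\mu)}$, not the fact that both sides are bounded by $1$.
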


\begin{proof}
The theorem follows from the main theorems in \cite{Ren63}: According to Theorem~3 of \cite{Ren63}, Corollary~\ref{co:Ren} ensures that for any $y$, our sequence of events 
$(U_M)_{M=1}^\infty$ is stable, i.e.,
\begin{equation}
    \mu(U_{M}\cap B) \to \exp\left(-\frac{1}{y}\right)\mu(B)
\end{equation}
for any measurable set $B$, as $M\to \infty$.
By Theorem~6 of \cite{Ren63}, for any probability measure $\mu'$ absolutely continuous with respect to $\mu$, the sequence $U_{M}$ is stable. Moreover, for any measurable $B$,
\begin{equation}
    \mu'(U_{M}\cap B) \to \exp\left(-\frac{1}{y}\right) \mu'(B),
\end{equation}
hence the theorem follows.
\end{proof}

\section{Extreme Value Theorem}\label{sec:5}

For almost every $\alpha\in[1,\infty)$, the partial quotients $a_n(1/\alpha)$ of the SCF are unbounded.
For $v\in T^1\M$, we consider the geodesic $\gamma^v$ determined by $v$, whose lift $\tilde\gamma^v\in A$ satisfies $\sup_{n\to\infty}a_n(|1/\tilde\gamma^v_\infty|) = \infty$.
One can choose a unique lift $\tilde\gamma^v\in A$ such that the lift of $v$ lies on the first excursion segment defined in Definition~\ref{de:exc}-(1).

\begin{defn}
Let $\Gamma$ be a Fuchsian group such that $\mathcal S=\Gamma\backslash \bH$ has at least one cusp $\xi_0=\Gamma.\xi$ for $\xi\in\partial \bH$.
Let $\Gamma_\xi$ be the stabilizer of $\xi$ in $\Gamma$.
If there exists a unique maximal open horoball $H_\xi$ centered at $\xi$ such that $\Gamma_\xi\backslash H_\xi$ embeds in $\mathcal S$,
we call $\Gamma\backslash H_\xi$ \emph{the maximal Margulis neighborhood} of the cusp $\xi_0$. See \cite{HP02} for details.

For a cusp $\xi$ and $p\in \mathcal S$, 
$$\mathrm{ht}_\xi(p): = d_{\mathcal S}(p,\partial(\Gamma\backslash H_\xi)) = \min_{p_0\in \partial(\Gamma\backslash H_\xi)}d_{\mathcal S}(p,p_0).$$
The \emph{height} $\mathrm{ht}(p)$ for $p\in\mathcal S$ is defined to be the maximum distance between $p$ and the boundary of $\Gamma\backslash H_\xi$ over all the cusps $\xi$, i.e., 
$$\mathrm{ht}(p) = \max_{\xi:\text{cusps}}\mathrm{ht}_\xi(p).$$
\end{defn}

Let us consider two cusps $\Theta.\infty$ and $\Theta.1$ of $\M$.
The stabilizer $\Theta_\infty$ of $\infty$ and the stabilizer $\Theta_1$ of $1$ in $\Theta$ are  
$$\Theta_\infty=\left\langle\begin{pmatrix}1&2\\0&1\end{pmatrix}\right\rangle\qquad\text{and}\qquad\Theta_1=\left\langle\begin{pmatrix}2&-1\\1&0\end{pmatrix}\right\rangle.$$
Let
$$
H_\infty := \{z\in\bH: \mathrm{Im}(z)> 1\}
\qquad\text{and}\qquad
H_1 := \{z\in\bH: |z-(1+i)|<1\}.
$$
We can easily check that $\Theta\backslash H_\infty$ and $\Theta\backslash H_1$ are the maximal Margulis neighborhoods of $\Theta.\infty$ and $\Theta.1$, respectively.

Let 
$$N_\infty=\{z\in\bH: |z-1|>\sqrt{2},\; 0<\mathrm{Re}(z)<2\}
\quad\text{and}\quad 
N_1 =\{z\in\bH: |z-1|<\sqrt{2},\; 0<\mathrm{Re}(z)<2\}.$$
Then
$$\mathrm{ht}(p) = \begin{cases}\mathrm{ht}_\infty(p) &\text{if }p\in \pi(N_\infty),\\
\mathrm{ht}_1(p) & \text{if }p\in \pi(N_1).\end{cases}$$

To present an asymptotic relation between the maximal height and the maximal distance up to the $n$-th excursion, we shall introduce some notions and notations.

Let $u$ be a point on the $n$-th excursion of $\gamma^v$. 
Then $u$ is a point on $c(\rho^{n-1}\tilde\gamma^v)$ defined in Definition~\ref{de:exc}.
Let $p=\pi(u)$.
If $s_n=e$ and $u\in N_\infty$, then $\mathrm{ht}(p)$ is the distance between $u$ and $\partial H_\infty$.
On the other hand, if $s_n=o$ and $u\in N_1$, then $\mathrm{ht}(p)$ is the distance between $u$ and $\partial H_1$, which equals the distance between $\tilde\iota(u)$ and $\tilde\iota(\partial H_1) = \partial H_\infty.$

We define a modified geodesic for the $n$-th excursion by
\begin{equation*}
\gamma_n^* := 
\begin{cases}\rho^{n-1}(\tilde\gamma^v)& \text{if }\rho^{n-1}(\tilde\gamma^v)_\infty \ge 2,\\
\tilde\iota\rho^{n-1}(\tilde\gamma^v)& \text{if }\rho^{n-1}(\tilde\gamma^v)_\infty\in (1,2).
\end{cases}\\
\end{equation*}
If $p_n$ denotes the midpoint of $\gamma_n^*$,
then we define
$$h_n:= d_\M(p_n,\partial H_\infty).$$
We can consider $h_n$ as the first local maximum of the height function for the $n$-th excursion, provided $h_n$ is sufficiently large. 
Since the maximal height up to the $N$-th excursion and $\sup_{1\le n \le N}h_n$ are comparable to each other, the following comparison of maximal distance with $h_n$ gives the same asymptotic comparison for the maximal height.
\begin{lem}\label{le:5.4}
For almost every $v\in T^1\mathcal M$, there are sequences $\zeta_n\ge0$, $\eta_n>0$ such that $\zeta_n, \eta_n\to 0$ and 
\begin{equation*}
    \sup_{1\le n \le N} h_n - \zeta_N \le \sup_{0\le t\le T_{N}} d_{\M}(\pi(i),\gamma^v(t))\le \sup_{1\le n \le N+1} h_n+\eta_N.
\end{equation*}
\end{lem}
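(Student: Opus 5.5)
Throughout, I compare the distance to the fixed point $\pi(i)$ with the height of the point along each excursion segment. The plan is to show that on each excursion the distance $d_\M(\pi(i),\gamma^v(t))$ equals the height of $\gamma^v(t)$ up to a bounded error, and that this error tends to $0$ whenever the height is large. For a point $u\in\bH$ with $\mathrm{Im}(u)$ large and $|\mathrm{Re}(u)|\le 1$, we have $d_{\bH}(i,u)=\log\mathrm{Im}(u)+O(\mathrm{Im}(u)^{-2})$. Moreover, $d_\M(\pi(i),\pi(u))$ is realized by a lift of $i$ in the horizontal strip, so it differs from $\log\mathrm{Im}(u)=d(u,\partial H_\infty)$ by $o(1)$ as $\mathrm{Im}(u)\to\infty$. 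On the other cusp I conjugate by $\widetilde\iota$, which exchanges $H_1$ with $H_\infty$. Since $\widetilde\iota$ is an isometry of $\bH$ normalizing $\Theta$ up to the orientation reversal (it sends the tessellation $\mathcal T$ to itself), it sends $\pi(i)$ to a fixed point at bounded distance. I would instead use the point $\widetilde\iota(i)$ as reference; the difference is a constant, and it has to be absorbed. If the two references are not both at height zero, I compare $d(\pi(i),\cdot)$ with $\mathrm{ht}(\cdot)$ directly: outside a compact core $K$ of $\M$, $d_\M(\pi(i),p)=\mathrm{ht}(p)+c_\xi+o(1)$, where $c_\xi=d(\pi(i),\partial(\text{Margulis nbhd of }\xi))$. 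Both constants vanish here because $i\in\partial H_\infty$ and, by the symmetry $\widetilde\iota$ together with $\sigma$, the point $i$ is also at the same position relative to $H_1$; this is what I would check first.

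Next, I would show that along the $n$-th excursion segment the height is maximized, up to $o(1)$, at the top point $p_n$ of $\gamma_n^*$. After applying $\gamma_n^*$ (via $\rho^{n-1}$ and possibly $\widetilde\iota$), the segment $c(\rho^{n-1}\tilde\gamma^v)$ runs from $\mathrm{Re}=1$ to the arc $\mathcal C_{(k,\veps)_s}$ (or its $\widetilde\iota$-image). It stays in the single cusp region $N_\infty$ over the strip covered by the translates of $H_\infty$, except for pieces of bounded height near its ends. The Euclidean top of the semicircle $\gamma_n^*$ has imaginary part $(\gamma_\infty-\gamma_{-\infty})/2\approx a_n$, and $\mathrm{Re}(p_n)$ lies between $1$ and $2a_n$. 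So the maximum of $\log\mathrm{Im}$ on the segment is attained at $p_n$ whenever $p_n$ lies in the segment. The only case where it does not is a very short excursion; there both sides are $O(1)$, which is harmless since the suprema tend to infinity almost surely (the $a_n$ are unbounded a.e.).

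Combining the two steps gives, for each $n$, $\sup_{t\in\text{$n$-th segment}}d_\M(\pi(i),\gamma^v(t))=h_n+o_{h_n\to\infty}(1)$ when $h_n$ is large, and $O(1)$ otherwise. Taking the supremum over $n\le N$ yields the two-sided bound with errors tending to $0$, since $\sup_{n\le N}h_n\to\infty$ a.e. (Theorem~\ref{thm:1} and ergodicity). The shift to $N+1$ on the right and the nonnegativity of $\zeta_N$ come from boundary effects. The time interval $[0,T_N]$ starts at $\gamma^v(0)$, which may sit partway into the first segment, so the first segment is only partially seen; this gives the lower bound with a nonnegative correction $\zeta_N$. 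Because $\gamma^v(0)$ lies at time offset $t$ before $\xi$, the interval $[0,T_N]$ may also reach into the $(N+1)$-st segment, which gives the upper bound involving $h_{N+1}$.

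The main obstacle is the uniform comparison $d_\M(\pi(i),p)=\mathrm{ht}(p)+o(1)$ at both cusps, with the same zero constant. This requires identifying a lift of $\pi(i)$ that sits on both $\partial H_\infty$ and a translate of $\partial H_1$; I would verify that $i$, or $\sigma$-related points such as $1+i$, lies on the circle $|z-(1+i)|=1$ up to the $\Theta$-action. If the constants turn out unequal, the statement would need those constants added, so I would do this computation first.
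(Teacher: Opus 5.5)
Your outline matches the paper's: compare $d_{\mathcal M}(\pi(i),\cdot)$ with the height excursion by excursion, with $o(1)$ error on deep excursions and $O(1)$ on shallow ones, then use $\sup_{n\le N}h_n\to\infty$. The point you flag as the main obstacle works out. Since $|i-(1+i)|=1$, we have $i\in\partial H_\infty\cap\partial H_1$. Moreover $\widetilde\iota(i)=i$, and $\widetilde\iota$ normalizes $\Theta$: $\widetilde\iota\tau\widetilde\iota=\bigl(\begin{smallmatrix}2&-1\\1&0\end{smallmatrix}\bigr)$ and $\widetilde\iota\sigma\widetilde\iota=\sigma$. Hence $d_{\mathcal M}(\pi(i),p)\ge \mathrm{ht}(p)$ for every $p$, and $d_{\mathcal M}(\pi(i),p)=\mathrm{ht}(p)+O(e^{-2\,\mathrm{ht}(p)})$ deep in either cusp.

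The gap is in your second step. The $n$-th excursion segment is not confined to one cusp region apart from pieces of bounded height at its ends: its terminal part can already be deep in the cusp of the $(n+1)$-st excursion. Take $(a_n,\varepsilon_n)_{s_n}=(2,+1)_e$ with $\alpha_n=5-\epsilon$. Then $Tx_n=1-\epsilon$, so $s_{n+1}=o$ and $a_{n+1}\approx 1/\epsilon$. The point where the geodesic meets $\mathcal C_{(2,+1)_e}=\ell(4,5)$ lies in the horoball at $5\in\Theta.1$ at height $\frac12\log(1/\epsilon)+O(1)$, whereas $h_n=O(1)$. Also, when $\alpha^*_{n+1}<2+\sqrt3$, the top $p_{n+1}$ of $\gamma^*_{n+1}$ can lie before the starting point of segment $n+1$, i.e.\ inside segment $n$. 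So your per-segment claim (``$\sup d$ over segment $n$ is $h_n+o(1)$, or $O(1)$ for short excursions'') is false.

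What does hold, and what the paper proves as the ``second local maximum'' on $[T_{n-1},T_n]$, is that the supremum over segment $n$ is at most $\max(h_n,h_{n+1})$ plus the same type of error. The reason is that the tail of segment $n$ is dominated by the junction point, which belongs to excursion $n+1$. This, not the time offset, is why $h_{N+1}$ appears on the right-hand side.

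Your treatment of the offset is also incomplete. The extra window of length $t$ after segment $N$ need not stay inside segment $N+1$; it can run across several short segments, so it does not by itself give an $o(1)$ error. Either start the clock at $\xi_{\tilde\gamma^v}$, as the paper implicitly does by treating $[T_{n-1},T_n]$ as the $n$-th segment, or add a separate argument for that window. Once the tail of each segment is bounded via $h_{n+1}$, taking the supremum over $n\le N$ gives the lemma.
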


\begin{proof}
Suppose that $\rho^{n-1}(\tilde\gamma^v)_\infty\ge 2$. 
The height function $\mathrm{ht}(\tilde\gamma^v(t))$ has at most two local maxima on the interval $[T_{n-1}, T_n]$.
Let $T_{n-1}\le t_{n}\le T_n$ be the point that $\mathrm{ht}(\tilde\gamma^v(t))$ reaches its first local maximum. If $\rho^{n-1}(\tilde\gamma^v)_\infty\ge 4$, we have $h_n = \mathrm{ht}(\tilde\gamma^v(t_n))$, otherwise, since $h_n$ may be attained before $T_{n-1}$, 
\begin{equation*}
    h_n -c_1 \le \mathrm{ht}(\tilde\gamma^v(t_n))<h_n
\end{equation*} 
for some $c_1>0$. 
The height function has the second local maximum when $\tilde\gamma^v(t)$ has another excursion to a cusp on the interval $t_n\le t \le T_n$, 
\begin{equation*}\label{eq:heightineq}
    \sup_{T_{n-1}\le t \le T_{n}} \mathrm{ht}(\tilde\gamma^v(t)) \le \max(h_n,h_{n+1}).
\end{equation*}
One can prove it in the case $1\le\rho^{n-1}(\tilde\gamma^v)_\infty\le 2$ similarly.

Let $T_{n-1}\le s_{n} \le T_{n}$ be the point where $\sup_{T_{n-1}\le t\le T_n}d_{\M}(\pi(i),\gamma^v(t))$ attains the first local maximum on $[T_{n-1}, T_{n}]$.
If $2\le\rho^{n-1}(\tilde\gamma^v)_\infty\le 3$, then since both $\mathrm{ht}(\tilde\gamma^v(s_n))$ and $d_{\M}(\pi(i),\gamma^v(s_n))$ are bounded, there is $c_2>0$ such that
\begin{equation}\label{eq:shallowexcursion}
    h_n - c_2 \le d_{\M}(\pi(i),\gamma^v(s_n)) \le h_n +c_2.
\end{equation}  
Assume that $\rho^{n-1}(\tilde\gamma^v)_\infty \ge 3$.
Then the height $\mathrm{ht}(\tilde\gamma^v(s_n))$ is bounded from below by $h_n - c_3e^{-2h_n}$ for some $c_3>0$. 
Since the distance $d_{\M}(\pi(i),\gamma^v(s_n))$ is bounded by the sum of $ \mathrm{ht}(\tilde\gamma^v(s_n))$ and the horocyclic width $2\exp(\mathrm{ht}(\tilde\gamma^v(s_n)))$, if $\rho^{n-1}(\tilde\gamma^v)_\infty \ge 4$ 
\begin{equation}\label{eq:deepexcursion}
    h_n \le d_{\M}(\pi(i),\gamma^v(s_n))
    \le 
    h_n + O(e^{-h_n})
\end{equation} 
otherwise
\begin{equation*}
    h_n -c_1\le d_{\M}(\pi(i),\gamma^v(s_n))
    \le 
    h_n + O(e^{-h_n}).
\end{equation*}
Likewise, the same argument works for $1\le\rho^{n-1}(\tilde\gamma^v)_\infty\le 2$. 

The distance function can have the second local maximum on $[T_{n-1}, T_n]$ during another cuspidal excursion as well.
If $\rho^n(\tilde\gamma^v)_\infty \ge 3$, the second local maximum is attained at $T_n$.
By a similar argument for \eqref{eq:deepexcursion}, in this case we have
\begin{equation*}
    d_{\M}(\pi(i),\gamma^v(T_n)) 
    \le \mathrm{ht}(\tilde\gamma^v(T_n)) + \textrm{the horocylic length at $T_n$}
    \le \mathrm{ht}(\tilde\gamma^v(T_n)) + O(e^{-h_n}).
\end{equation*}
For $2\le\rho^n(\tilde\gamma^v)_\infty\le3$, by the same reason as \eqref{eq:shallowexcursion}, it lies in $[h_n-c_2, h_n+c_2]$.

Since $\sup_{t\in[0, T]}\mathrm{ht}(\tilde\gamma^v(t)) \to \infty$ as $T\to\infty$ for almost every $v$, $\rho^n(\tilde\gamma^v)_\infty$ is eventually greater than $4$.
Therefore by letting
\begin{equation*}
    \zeta_N = 
    \begin{cases}
        \max\{c_1, c_2\} &{\rm if }\quad 2\le\rho^n(\tilde\gamma^v)_\infty \le4,\\
        0 &{\rm otherwise},
    \end{cases}
    \quad
    \eta_N = 
    \begin{cases}
        \max\{c_1, c_2\} &{\rm if }\quad2\le\rho^n(\tilde\gamma^v)_\infty \le3,\\
        O(\exp(-\max_{1\le n \le N}h_n)) &{\rm otherwise}, 
    \end{cases}
\end{equation*} 
this completes the proof.
\end{proof}

To obtain a relation between $a_n(|1/\tilde\gamma_\infty^v|)$ and the height $h(\gamma^v(t))$, we first derive a relation between $(a_n)$ and the number of tiles crossed by the $n$-th excursion of $\tilde\gamma^v$ in the following lemma.

\begin{lem} \label{superlevelsetforexcursion}
For $\gamma\in A$, the integer $a_n(|1/\gamma_\infty|)$ is the number of quadrilaterals in $\mathcal T$ which the $n$-th excursion segment passes through.
\end{lem}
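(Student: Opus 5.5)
We reduce the statement to the first excursion and then analyse the two types of the first digit separately, following the crossing analysis already carried out in the proof of Theorem~\ref{th:exc}.

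\textbf{Reduction to $n=1$.} By Definition~\ref{de:exc}, the $n$-th excursion segment of $\gamma$ is $c(\rho^{n-1}(\gamma))$, and $\rho^{n-1}(\gamma)$ is obtained from $\gamma$ by an element of $\Theta$ (see \eqref{eq:rho.def}). Since $\Theta$ preserves the tessellation $\mathcal T$, the number of quadrilaterals met by $c(\rho^{n-1}(\gamma))$ is an invariant of the segment. By \eqref{eq:rho.shift}, the first SCF digit of $|1/\rho^{n-1}(\gamma)_\infty|$ is $(a_n,\veps_n)_{s_n}$. So it suffices to treat $n=1$. We may also assume $j_\gamma=1$, since $z\mapsto-\bar z$ preserves $\mathcal T$ and exchanges $A_+$ and $A_-$. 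Let $1/\gamma_\infty\in I_{(k,\veps)_s}$.

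\textbf{Case $s=e$.} The segment $c(\gamma)$ starts at $\xi_\gamma\in\ell(1,\infty)$, which is a side of the tile $\tau\mathcal Q$ with vertices $1,2,3,\infty$. As shown in the proof of Theorem~\ref{th:exc}, $\gamma$ then successively crosses the vertical lines $\ell(2i+1,\infty)$ for $i=1,\dots,k-1$, and next meets $\mathcal C_{(k,\veps)_e}$. Between consecutive crossings it lies in a single tile $\tau^{i}\mathcal Q$ (vertices $2i-1,2i,2i+1,\infty$). We must check that before $\eta_\gamma$ it crosses no arc $\ell(2i,2i\pm1)$ other than $\mathcal C_{(k,\veps)_e}$. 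This holds because $\gamma_{-\infty}<1$ and $\gamma_\infty\in(2k,2k+1)$ or $(2k-1,2k)$: a circle with these endpoints cannot enter and leave a Ford-type semicircle of radius $1/2$ lying strictly between $\gamma_{-\infty}$ and $\gamma_\infty$. Counting the tiles $\tau\mathcal Q,\dots,\tau^k\mathcal Q$ gives $k$ quadrilaterals. The case $(1,1)_e$ is the degenerate instance with no vertical crossing, giving one tile.

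\textbf{Case $s=o$.} Apply $\tilde\iota$, which maps $\mathcal T$ to itself (it exchanges $\Theta.\infty$ and $\Theta.1$ and is a symmetry of the $\Theta$-tessellation). By \eqref{eq:albe2} and Lemma~\ref{le:iota}, it sends the digit $(k,\veps)_o$ to $(k,\veps)_e$. It maps $\ell(1,\infty)$ to $\ell(1,\infty)$, and maps the arcs $\ell(1,\tfrac{i+1}{i})$ crossed in the proof of Theorem~\ref{th:exc} to the vertical lines $\ell(2i+1,\infty)$ up to the relevant translation. So the count reduces to the even case. Equivalently, one can count directly: the $k-1$ arcs $\ell(1,\frac{i+1}{i})$ separate $k$ consecutive tiles with common vertex $1$, ending at $\mathcal C_{(k,\veps)_o}$.

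\textbf{Main obstacle.} The delicate step is the "no extra crossings" claim. We must show that the segment from $\xi_\gamma$ to $\eta_\gamma$ does not clip any further edge of $\mathcal T$. For example, near $\xi_\gamma$ it must not cross the arc $\ell(1,2)$ or small arcs at $0$, which is where the constraint $\gamma_{-\infty}\in[-\sqrt3,2-\sqrt3)$ from $\mathcal I_+$ enters. We also need to check that the endpoint $\eta_\gamma$ is counted consistently with the half-open convention of $c(\gamma)$. We would verify this by the elementary fact that two geodesics intersect at most once, applied to each edge incident to the tiles listed above, using the interval positions of $\gamma_{\pm\infty}$.
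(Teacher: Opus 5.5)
Your proposal is correct and follows the paper's proof: the even case is the direct count of the tiles $\tau\mathcal Q,\dots,\tau^k\mathcal Q$ coming from $1/x\in[2k-1,2k+1]$, and the odd case is reduced to it via the involution ($\iota$ on digits, $\tilde\iota$ on $\mathcal T$). Your reduction to $n=1$ through \eqref{eq:rho.shift} and the endpoint-interlacing check only make explicit what the paper leaves implicit. (Minor point: the segment lies in $\mathrm{Re}\ge 1$, so only $\gamma_{-\infty}<1$ is actually used and the small arcs at $0$ play no role; also, in the odd case $\ell(1,2)$ is crossed rather than avoided.)
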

\begin{proof}
If $x\in I_{(k,+1)_e}\cup I_{(k,-1)_e}$, then $1/x\in [2k-1,2k+1]$. Thus, the first excursion passes $k$ quadrilaterals of $\mathcal T$.
Since $\iota(I_{(k,\veps)_o})= I_{(k,\veps)_e}$ for $k\ge 2$, if $x\in I_{(k,-1)_o}\cup I_{(k,+1)_o}$, then the first excursion passes $k$ quadrilaterals of $\mathcal T$.
\end{proof}

We establish some auxiliary lemmata before moving on to the proof of Theorem~\ref{thm:1.2}.
For the simplicity of notation, for $T>0$ and $v\in T^1\M$, let $$ A_N(v) : = \max_{1\le n\le N} a_n(x(v)),\qquad \text{where}\;\; x(v)=|1/\tilde\gamma^v_\infty|.$$

\begin{lem}\label{le:mA_N}
For all $y>0$,
$$\lim_{N\to \infty}m\left\{v\in T^1\M:A_N(v)\le C_0Ny\right\}=\exp\left(-\frac{1}{y}\right).$$
\end{lem}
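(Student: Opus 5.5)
The plan is to transport the event $\{A_N(v)\le C_0Ny\}$ from $T^1\M$ to the interval $(0,1)$ through the suspension model of Section~\ref{sec:3.2}, and then apply Theorem~\ref{th:Gal2}. The two ingredients are the description of the Liouville measure and the invariance of Galambos' law under absolutely continuous changes of measure.

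First we describe $m$ in the coordinates of the suspension. Via the isomorphism between $g_s$ and the suspension flow $\phi_s$ on $\Omega^{r_1}$, and Theorem~\ref{th:com2}, the Liouville measure $m$ (normalized) corresponds to
$$\frac{1}{Z}\,\widetilde\mu \otimes dt \quad\text{restricted to } \Omega^{r_1},$$
where $\widetilde\mu = dx\,dy\,d\delta/(1+xy)^2$ is as in \eqref{eq:msr} and $Z$ is the normalizing constant. This constant is finite, since by Theorem~\ref{th:exctime} we have $\int r_1\,d\widetilde\mu = 2\log(2+\sqrt3)\,C^*$ up to normalization. For $v$ with coordinates $(x,y,j,t)$, the lift $\tilde\gamma^v$ of Definition~\ref{de:exc} is exactly the one with $J(\tilde\gamma^v)=(x,y,j)$. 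Hence $x(v)=|1/\tilde\gamma^v_\infty| = x$, and $A_N(v)$ depends only on the first coordinate $x$.

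Next we push the measure forward to the first coordinate. Consider the projection $P:(x,y,j,t)\mapsto x$. Then $P_*m =: \mu'$ has density
$$\frac{1}{Z}\sum_{j=\pm1}\int_{\sqrt3-2}^{\sqrt3} \frac{r_1(x,y,j)}{(1+xy)^2}\,dy$$
with respect to Lebesgue measure. This is an $L^1$ density: it has finite total mass $1$, because $r_1$ is integrable as noted above. So $\mu'$ is an absolutely continuous probability measure on $(0,1)$. The set $\{v: A_N(v)\le C_0Ny\}$ is precisely $P^{-1}\{x:\max_{1\le n\le N}a_n(x)\le C_0Ny\}$, up to the null set of vectors not lying on the cross section coordinates, namely the rationals and the set $X\setminus X^*$ from Theorem~\ref{th:exc}(2). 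Therefore
$$m\{A_N\le C_0Ny\}=\mu'\Big\{\max_{1\le n\le N}a_n\le C_0Ny\Big\}.$$

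Theorem~\ref{th:Gal2} applied to $\mu'$ then gives the limit $\exp(-1/y)$.

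The main obstacle is justifying cleanly that $m$ disintegrates as $\widetilde\mu\otimes dt$ on the suspension, including the normalization and the null sets. This is Ambrose--Kakutani together with the fact (\cite{Ser85}) that $\nu=d\alpha\,d\beta/(\alpha-\beta)^2$ times $dt$ is the Liouville measure in Hopf coordinates. A secondary point is to make sure Theorem~\ref{th:Gal2} is used only with absolute continuity, and not with a bounded density, since $r_1$ is unbounded near the endpoints of the branch intervals.
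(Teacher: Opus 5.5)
Your proposal is correct and follows the paper's own argument: push $m$ forward through the suspension model of Section~\ref{sec:3.2} to the first coordinate. The result is an absolutely continuous measure with density proportional to $\int r_1/(1+xy)^2\,dy$, and Theorem~\ref{th:Gal2} then gives the limit. Your additional bookkeeping is a bit more careful than the paper's stated density but leaves the argument unchanged: the sum over $j=\pm1$, the normalizing constant from integrability of $r_1$, and the point that only absolute continuity (not a bounded density) is needed.
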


\begin{proof}
Let $b: T^1\M\to \Omega^{r_1}$ be the bijection described at the beginning of Section~\ref{sec:3.2}, and let $\pi_1: \Omega^{r_1}\to [0,1]$ denote the projection onto the first coordinate.
The left-hand side equals
$$
\lim_{N\to \infty}
(\pi_1\circ b)_* m\left\{x\in [0,1]:\max_{1\le n\le N} a_n(x)\le C_0 N y \right\}.$$
Since $(\pi_1\circ b)_* m$ is an absolutely continuous measure on $[0,1]$ with respect to the Lebesgue measure, whose density function is
$$\int_{\sqrt{3}-2}^{\sqrt{3}}\frac{r_1(x,y)}{\log(2+\sqrt{3})(1+xy)^2}dy.$$ 
From Theorem~\ref{th:Gal2}, this limit is $\exp(-1/y).$
\end{proof}

\begin{lem}\label{le:A_N0}
Let $\delta(N)>0$ be a sequence converging to $0$.
Then
$$\lim_{N\to \infty}m\left\{v\in T^1\M: \frac{A_N(v)}{C_0Ny}\in (1-\delta(N), 1+\delta(N))\right\}=0.$$
\end{lem}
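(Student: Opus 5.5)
The plan is to derive this directly from Lemma~\ref{le:mA_N}, using that the limiting law $y\mapsto \exp(-1/y)$ is continuous. Fix $y>0$ and $\epsilon\in(0,1)$. Because $\delta(N)\to 0$, for all sufficiently large $N$ we have $\delta(N)<\epsilon$. For those $N$ the set in question is contained in
$$\left\{v: C_0N y(1-\epsilon) < A_N(v) \le C_0Ny(1+\epsilon)\right\}.$$
Write this set as $\{A_N\le C_0N y(1+\epsilon)\}\setminus\{A_N\le C_0N y(1-\epsilon)\}$. Its measure is then the difference
$$m\{A_N(v)\le C_0N\,y(1+\epsilon)\}-m\{A_N(v)\le C_0N\,y(1-\epsilon)\}.$$
Here we use that the open endpoint $1+\delta(N)$ sits inside the closed bound $1+\epsilon$. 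The lower endpoint only requires containment, so strictness does not matter there.

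Next apply Lemma~\ref{le:mA_N} twice, once with $y$ replaced by $y(1+\epsilon)$ and once by $y(1-\epsilon)$. Both are positive reals, and the lemma holds for every $y>0$. This gives
$$\limsup_{N\to\infty} m\left\{v: \frac{A_N(v)}{C_0Ny}\in(1-\delta(N),1+\delta(N))\right\}\le \exp\!\left(-\frac{1}{y(1+\epsilon)}\right)-\exp\!\left(-\frac{1}{y(1-\epsilon)}\right).$$
Let $\epsilon\to0$. The function $y\mapsto \exp(-1/y)$ is continuous on $(0,\infty)$, so the right-hand side tends to $0$. Since the measure is nonnegative, the limit exists and equals $0$.

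The main point to check is that Lemma~\ref{le:mA_N} may legitimately be applied with the parameter $y(1\pm\epsilon)$ while the normalisation $C_0N$ stays fixed. This is immediate, since the lemma is uniform in the choice of $y>0$ and $N$ is the only limit variable. A second point is that $m$ need not be a probability measure. This causes no trouble, because Lemma~\ref{le:mA_N} was stated for $m$ as normalised in the paper, and all comparisons above are made within that same normalisation. Beyond these two checks the argument is a routine squeeze, with no dependence on the rate at which $\delta(N)\to0$.
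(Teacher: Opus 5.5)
Your proof is correct and follows the paper's argument exactly. For large $N$ you enlarge the window $(1-\delta(N),1+\delta(N))$ to a fixed $(1-\epsilon,1+\epsilon)$, apply Lemma~\ref{le:mA_N} at $y(1\pm\epsilon)$, and let $\epsilon\to 0$ using continuity of $\exp(-1/y)$. Your $\limsup$ formulation is slightly more careful than the paper's, which writes an equality where only the limit as $N\to\infty$ holds. Note also that Lemma~\ref{le:mA_N} only needs to hold at the two fixed values $y(1\pm\epsilon)$; you do not need it to be uniform in $y$, as you claim.
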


\begin{proof}
For any $\veps>0$, there is $N'$ such that $\delta(N)<\veps$ for all $N>N'$.
Thus, for $N>N'$,
\begin{align*}
m\left( \frac{A_N(v)}{C_0Ny}\in (1-\delta(N), 1+\delta(N))\right)
&\le m\left(\frac{A_N(v)}{C_0Ny}\in (1-\veps,1+\veps)\right) \\
& = \exp\left(-\frac{1}{y(1+\veps)}\right)-\exp\left(-\frac{1}{y(1-\veps)}\right),
\end{align*}
which goes to $0$ when $\veps\to 0$.
\end{proof}

Let 
$$N(T, v)\text{ be the number of returns to the cross section $\pi(X)$ of $g_t(v)$ for $0 \le t \le T$.}$$
We use the concentration of $N(T,v)$ around its mean $T/C^*$.

\begin{lem}[Concentration of Return Times]\label{lem:concentration_returns}
The geodesic flow is ergodic (since the base map $\overline{T}$ is mixing). By the Birkhoff Ergodic Theorem, $N(T,v)/T \to 1/C^*$ almost surely. Consequently, for any $\epsilon > 0$,
$$
\lim_{T\to\infty} m\left\{v \in T^1\mathcal{M} : \left|N(T,v) - \frac{T}{C^*}\right| > \epsilon \frac{T}{C^*} \right\} = 0.
$$
\end{lem}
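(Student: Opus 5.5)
The plan is to derive the statement from the almost sure behaviour of the return times $T_n$ obtained in Theorem~\ref{th:exctime}, transported to the flow through the suspension model $\Omega^{r_1}$ of Section~\ref{sec:3.2}. Almost sure convergence implies convergence in measure, so it suffices to show $N(T,v)/T\to 1/C^*$ for $m$-almost every $v$.

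\textbf{Step 1 (ergodicity).} By the proof of Lemma~\ref{le:bdd}, the spectral gap of Proposition~\ref{spectralgap} gives mixing, hence ergodicity, of $(T,\mu)$. Ergodicity of the natural extension $(\overline T,\bar\mu)$ follows because it is generated by $T$ and the shift $\sigma^{\pm}$. For the double cover $\widetilde T$, I would note that the sign coordinate evolves as a $\{\pm1\}$-cocycle $-\veps_1$. Even without ergodicity of $\widetilde T$, the return time $r_1$ depends only on $(x,y)$ up to the symmetry $z\mapsto-\bar z$, and this symmetry conjugates the two sheets. So ergodic averages of $r_1$ along $\widetilde T$ coincide with those along $\overline T$, which is ergodic.

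\textbf{Step 2 (Birkhoff for $r_1$).} Theorem~\ref{th:exctime} gives $T_n/n=\frac1n\sum_{k<n} r_1\circ\widetilde T^k\to C^*\in(0,\infty)$ for almost every $x$. Since $r_1$ is determined by $(\alpha_1^*,\beta_1^*)$ through Proposition~\ref{pr:r_n}, and the limit there holds for every $\beta_1$ (Lemmas~\ref{le:1},~\ref{le:2} are pointwise in $\beta$), the convergence holds $\widetilde\mu$-a.e. on $\Omega^\pm$. In particular $r_1\in L^1(\widetilde\mu)$, because the limit equals $\int r_1\,d\widetilde\mu/\widetilde\mu(\Omega^\pm)$, and the suspension measure on $\Omega^{r_1}$ is finite. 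This finite measure is identified with the Liouville measure $m$ via the isomorphism $b$.

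\textbf{Step 3 (inverting to $N(T,v)$).} For $v$ corresponding to $(x,y,j,t)\in\Omega^{r_1}$, the definition gives $T_{N(T,v)}-t\le T< T_{N(T,v)+1}-t$, with $t$ fixed. Since $T_n/n\to C^*$ almost surely and $N(T,v)\to\infty$ (the roof is finite a.e.), dividing by $N(T,v)$ yields $T/N(T,v)\to C^*$. The set of full $\widetilde\mu$-measure in Step 2 pulls back to a set of full $m$-measure, because fibres over null sets of the base are null in the suspension. Hence $N(T,v)/T\to1/C^*$ $m$-a.e., and convergence in probability gives the displayed limit.

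The main obstacle is Step 2. Theorem~\ref{th:exctime} is phrased for almost every $x$ and for $\lim 2\log Q_N/N$, whereas we need it $\widetilde\mu$-a.e. on $\Omega^\pm$ including the $y$-variable. I would justify this by checking that the $\asymp$ constants in Lemma~\ref{le:2} are uniform in $\beta_1\in[\sqrt3-2,\sqrt3]$, which holds since $1+\tfrac{P_N}{Q_N}\beta_1$ is bounded away from $0$ for $\beta_1\ge\sqrt3-2$.
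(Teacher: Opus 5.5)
Your proposal is correct and follows the same route as the paper, which asserts ergodicity, invokes the Birkhoff theorem to get $N(T,v)/T\to 1/C^*$ almost surely, and passes from almost sure convergence to convergence in measure. You fill in details the paper leaves implicit: you read the almost-everywhere limit $T_n/n\to C^*$ off Theorem~\ref{th:exctime} with constants uniform in $\beta_1$, invert via $T_{N(T,v)}-t\le T<T_{N(T,v)+1}-t$, and observe that $r_1$ does not depend on the sheet $j$, so only ergodicity of $\overline T$ is needed rather than ergodicity of the double cover $\widetilde T$ (the latter is not an immediate consequence of mixing of $\overline T$, which is what the paper's parenthetical implicitly relies on).
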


Let $$C := C_0/C^*,$$ 
where $C^*$ is the constant in Theorem~\ref{th:exctime}.
For $T>0$ and $v\in T^1\M$, let
$$M(T, v) := \max_{0 \le t \le T} \exp d_{\M}(\pi(i),\gamma^v(t)).$$
We analyze the probability 
$$P_T(y) := m\left\{M(T,v) \le u_T(y)\right\},$$ where the threshold is $u_T(y) := CTy$.

\begin{proof}[Proof of Theorem~\ref{thm:1.2}]
Since $\gamma_n^*$ is a Euclidean half circle of radius $\exp(h_n)$ in the upper half plane, Lemma~\ref{superlevelsetforexcursion} implies that
\begin{equation*}%\label{eq:ah}
a_n-\frac{3-\sqrt{3}}{2}\le \exp(h_n) \le a_n+\frac{\sqrt{3}+1}{2}.
\end{equation*}
From Lemma~\ref{le:5.4}, there exists a constant $c>0$ such that
\begin{equation*}
e^{-\zeta_N}(A_N-c)\le M(T_N,v)\le e^{\eta_N}\left(A_{N+1}+c\right).
\end{equation*}
As above, we have the fundamental bracketing relationship:
\begin{equation}\label{eq:height_bracketing}
e^{-\zeta_{N(T,v)}}(A_{N(T,v)}(v)-c) \le M(T, v) \le e^{\eta_{N(T,v)}}(A_{N(T,v)+1}(v)+c). \tag{5.2}
\end{equation}

Fix $\epsilon > 0$. Define the deterministic time intervals:
$$
N_T^- = \lfloor (1-\epsilon)T/C^* \rfloor 
\qquad \text{and} \qquad
N_T^+ = \lfloor (1+\epsilon)T/C^* \rfloor.
$$
Let $B_\epsilon(T) :=\{ v: N(T,v) \notin [N_T^-, N_T^+]\}$. We know $m(B_\epsilon(T)) \to 0$ as $T \to \infty$.
Let $G_\epsilon(T) = B_\epsilon(T)^c$ be the ``good set". On $G_\epsilon(T)$, we use the bracketing (\ref{eq:height_bracketing}):
$$
e^{-\zeta_{N_T^-}}\left(A_{N_T^-}(v)-c\right) \le M(T, v) \le e^{\eta_{N_T^-}}\left(A_{N_T^++1}(v)+c\right).
$$

For the upper bound,
\begin{equation*}
P_T(y) = m\Big(M(T,v) \le u_T(y)\Big)
\le m\left(e^{-\zeta_{N_T^-}}\left(A_{N_T^-}(v)-c\right) \le u_T(y)\right) + o(1).
\end{equation*}
We analyze the threshold $u_T(y)$ relative to $N_T^-$. Since $T = N_T^- C^*/(1-\epsilon) + O(1)$,
\begin{equation*}
e^{\zeta_{N_T^-}}u_T(y)+c = e^{\zeta_{N_T^-}}CTy+c 
= C_0N_T^- \frac{y}{1-\epsilon}\left( e^{\zeta_{N_T^-}} + \frac{O(1)}{N_T^-}\right).
\end{equation*}
By Lemma~\ref{le:mA_N} and Lemma~\ref{le:A_N0}, we have:
$$
\lim_{T\to\infty} m\left(A_{N_T^-}(v) \le u_T(y)+c\right) = \exp\left(-\frac{1-\epsilon}{y}\right).
$$
Thus, $\limsup_{T\to\infty} P_T(y) \le e^{-(1-\epsilon)/y}$.

For the lower bound,
\begin{equation*}
P_T(y) \ge m\Big(v \in G_\epsilon(T) \text{ and } M(T,v) \le u_T(y)\Big)
\ge m\left(A_{N_T^++1}(v) 
\le e^{-\eta_{N_T^-}}u_T(y)-c\right) - o(1).
\end{equation*}
We analyze the threshold $u_T(y)$ relative to $T=C^*(N_T^+ + 1)/(1+\veps)+O(1)$.
Thus, 
$$
e^{-\eta_{N_T^-}}u_T(y)-c 
= e^{-\eta_{N_T^-}}CTy-c 
= C_0(N_T^+ + 1) \frac{y}{1+\epsilon} \left(e^{-\eta_{N_T^-}} + \frac{O(1)}{N_T^+ + 1}\right).
$$
By Lemma~\ref{le:mA_N} and \ref{le:A_N0},
$$
\lim_{T\to\infty} m\left(A_{N_T^++1}(v) \le e^{-\eta_{N_T^-}}u_T(y)-c\right) = \exp\left(-\frac{1+\epsilon}{y}\right).
$$
Thus, $\liminf_{T\to\infty} P_T(y) \ge e^{-(1+\epsilon)/y}$.

We have established
$$
e^{-(1+\epsilon)/y} \le \liminf_{T\to\infty} P_T(y) \le \limsup_{T\to\infty} P_T(y) \le e^{-(1-\epsilon)/y}.
$$
Since $\epsilon > 0$ was arbitrary, we let $\epsilon \to 0$. Both the lower and upper bounds converge to $e^{-1/y}$, which implies that
$$
\lim_{T\to\infty} P_T(y) = e^{-1/y}.
$$
\end{proof}

\end{document}